\def\biblio{\bibliographystyle{plainnat}\bibliography{../DataDrivenRobustBibliography}}
\providecommand{\customgenericname}{}
\newcommand{\newcustomtheorem}[2]{%
	\newenvironment{#1}[1]
	{%
		\renewcommand\customgenericname{#2}%
		\renewcommand\theinnercustomgeneric{##1}%
		\innercustomgeneric
	}
	{\endinnercustomgeneric}
}
\newcommand{\bc}{{\bf c}}
\newcommand{\bg}{{\bf g}}
\newcommand{\bh}{{\bf h}}
\newcommand{\bp}{{\bf p}}
\newcommand{\bq}{{\bf q}}
\newcommand{\br}{{\bf r}}
\newcommand{\bw}{{\bf w}}
\newcommand{\bx}{{\bf x}}
\newcommand{\by}{{\bf y}}
\newcommand{\bz}{{\bf z}}
\newcommand{\bbg}{{\bf G}}
\newcommand{\bbh}{{\bf H}}
\newcommand{\bbt}{{\bf T}}
\newcommand{\bby}{{\bf Y}}
\newcommand{\bbw}{{\bf W}}
\newcommand{\bdelta}{{\boldsymbol{\delta}}}
\newcommand{\blambda}{{\boldsymbol{\lambda}}}
\newcommand{\bsigma}{{\boldsymbol{\sigma}}}
\newcommand{\brho}{{\boldsymbol{\rho}}}
\newcommand{\bzeta}{{\boldsymbol{\zeta}}}
\newcommand{\bxi}{{\boldsymbol{\xi}}}
\newcommand{\Prb}{\mathbb{P}}  
\newcommand{\Exp}{\mathbb{E}}
\newcommand{\bei}{{\bf e}}
\def\Real{\mathbb{R}}
\let\N\Natural
\let\R\Real
\def\dim{\text{dim}}
\def\dimlower{\underbar{\text{dim}}}
\newcommand{\norm}[1]{\left \lVert #1 \right\|}
\def\conv{\operatorname{conv}}
\newcommand{\bzero}{{\boldsymbol{0}}}
\def\Rfun{\mathcal{R}}
\newcommand{\dist}{\textnormal{dist}}
\newcommand{\Face}{\textnormal{Faces}}
\newcommand{\Facet}{\textnormal{Facets}}
\def\ext{\operatorname{ext}}
\newcommand{\Piaff}{\mathcal{L}}
\def\relint{\textnormal{ri}}
\newcommand{\ie}{{\it{i.e., }}}
\newcommand{\eg}{{\it{e.g.}}, }
\begin{document}
\def\biblio{}



\RUNAUTHOR{Bertsimas, Shtern, Sturt}

\RUNTITLE{Two-Stage Sample Robust Optimization}

\TITLE{Two-Stage Sample Robust Optimization}
\ARTICLEAUTHORS{%
\AUTHOR{Dimitris Bertsimas}
\AFF{Operations Research Center, Massachusetts Institute of Technology, \EMAIL{dbertsim@mit.edu}} 
\AUTHOR{Shimrit Shtern}
\AFF{The William Davidson Faculty of Industrial Engineering \& Management, Technion - Israel Institute of Technology, \EMAIL{shimrits@technion.ac.il}}
\AUTHOR{Bradley Sturt}
\AFF{College of Business Administration, 
University of Illinois at Chicago, \EMAIL{bsturt@uic.edu}}
}
\ABSTRACT{We investigate a simple approximation scheme, based on overlapping linear decision rules, for solving data-driven two-stage distributionally robust optimization problems with the type-$\infty$ Wasserstein ambiguity set.  Our main result establishes that this approximation scheme is {asymptotically optimal} for two-stage stochastic linear optimization problems; that is, under mild assumptions, the optimal cost and optimal first-stage decisions obtained by approximating the robust optimization problem converge to those of the underlying stochastic problem as the number of data points grows to infinity. These guarantees notably apply to two-stage stochastic  problems that do not have \emph{relatively complete recourse}, which arise frequently in applications.  In this context, we show through numerical experiments that the approximation scheme is practically tractable and produces decisions which significantly outperform those obtained from state-of-the-art  data-driven alternatives.

}%

\KEYWORDS{Stochastic programming. Distributionally robust optimization. Sample average approximation.  } 
\HISTORY{This paper was first submitted on February 11, 2018 and underwent three revisions. It was accepted for publication on November 1, 2020. }

\maketitle

\section{Introduction} \label{sec:mp:intro}
Dynamic decision making under uncertainty, in which decisions are made and uncertainty is revealed over time, form the foundation of a myriad of applications in operations research, control theory, and computer science. 
A prominent framework which encompasses many of these decision problems is \emph{two-stage stochastic linear optimization}. Introduced by \cite{dantzig1955linear} and \cite{beale1955minimizing}, this framework is used to address problem settings where an initial decision is made under uncertainty, after which random variables are revealed, followed
by the selection of a second-stage decision. The enduring study of two-stage stochastic linear optimization can be attributed to its prevalence in modern operational applications, such as inventory management, network design, and energy planning \citep{birge2011introduction}. 

A central challenge in solving these optimization problems is that the probability distribution of the random variables is rarely known in practice. Consequently, much research effort has focused on developing methodologies that leverage historical data to find near-optimal solutions to  two-stage stochastic linear optimization problems with unknown distributions. 
In this context, perhaps the most celebrated data-driven approach for two-stage stochastic linear optimization is the sample average approximation (SAA). In a nutshell, SAA finds first-stage decisions by solving a two-stage stochastic linear optimization problem in which the true distribution is replaced by the empirical distribution of the historical data; see \citet[Section 5]{shapiro2009lectures}. 

The SAA approach to two-stage stochastic linear optimization offers several attractive properties. For one, SAA can be solved exactly as a linear optimization problem with size (number of decision variables and constraints) which scales linearly in the number of data points. 
Moreover, SAA is {asymptotically optimal} under mild probabilistic assumptions, meaning that the optimal cost and optimal first-stage decisions produced by SAA are guaranteed to converge to those of the underlying stochastic problem as the number of data points tends to infinity \citep{shapiro2003monte,robinson1996analysis,king1991epi}. 

However, the attractiveness of SAA in two-stage problems can be curtailed when faced with limited historical data. In both single-stage and two-stage problems, when the number of data points is finite, the optimal cost of SAA will produce an undesirable {optimistically-biased} estimate of the cost of the stochastic problem; see, \eg  \cite{van2017data}. Even more prominently, to ensure that the first-stage decisions from SAA will have a {feasible} second-stage decision, an impractically large or infinite number of data points can be required \citep[Section 3]{nemirovski2006scenario}. 
Two-stage problems which do not have {relatively complete recourse}, \ie problems in which the second-stage problem is not always feasible, frequently occur in practice and ``in many applications is a rule rather than an exception" \citep[Page 16]{nemirovski2006scenario}. 

To obtain better approximations of two-stage stochastic linear optimization from limited data, recent work has investigated augmentations of SAA in which  {adversarial noise} is added to the data points. These approaches are often formulated as distributionally robust optimization problems, where the optimal first-stage decision is that which performs best in expectation under an adversarially chosen probability distribution \citep{wiesemann2014distributionally,delage2010}. 
By restricting the adversary to choose probability distributions which are close, in some sense, to the historical data, there is growing evidence that the first-stage decisions produced by two-stage distributionally robust optimization can have better average out-of-sample performance compared to those produced by SAA  \citep{hanasusanto2016conic,jiang2018risk}. 

Unfortunately, the potential benefits of distributionally robust optimization in two-stage problems are often hobbled by an increase in computational cost. 
In contrast to SAA in two-stage problems, which can generally be solved as a linear optimization problem, two-stage robust optimization and two-stage distributionally robust optimization with Wasserstein-based ambiguity sets are shown to be NP-hard \citep{feige2007robust,hanasusanto2016conic}.  Accordingly, 
 the development of approximation algorithms for addressing two-stage distributionally robust optimization with data-driven ambiguity sets have become a central focus of research. \cite{hanasusanto2016conic} show two-stage problems with Wasserstein ambiguity sets that can be formulated exactly using co-positive optimization, which can be  approximated using semidefinite optimization. \cite{chen2019RSO} propose event-wise adaptations for solving two-stage distributionally robust optimization with Wasserstein and $k$-means ambiguity sets, and 
\cite{jiang2018risk} study sampling approaches for solving two-stage distributionally robust optimization with phi-divergence ambiguity sets. 

In this paper, we investigate a solution approach for a class of Wasserstein-based {two-stage  distributionally robust optimization problems},  that aims to gracefully retain the attractive properties (scalability and asymptotic optimality) of SAA. In particular, the approach   can be tractably solved as a linear optimization problem, and is guaranteed to converge asymptotically to the stochastic problem under mild assumptions. We believe the approach thus offers an attractive step towards bridging the relative merits of distributionally robust optimization and SAA in the context of two-stage problems, particularly those without relatively complete recourse.
In greater detail: 

\begin{itemize}
\vspace{0.5em}
\item We consider a solution {approach}, referred to as a \emph{multi-policy approximation}, for addressing data-driven two-stage distributionally robust optimization with the type-$\infty$ Wasserstein ambiguity set. The {approach} forms an approximation of these robust problems by optimizing overlapping  decision rules, one for each uncertainty set around each data point. We show that the approximation quality is guaranteed to outperform \emph{any} traditional decision rule approximation (Theorem~\ref{thm:mp:hierarchy}), and in the case of linear decision rules, can be solved as a linear or second-order conic optimization problem (Proposition~\ref{prop:mp:reformulation}). Similar to SAA, the size of this optimization problem (number of decision variables and constraints) grows linearly in the number of data points.

\item We prove that the multi-policy approximation with linear decision rules is {asymptotically optimal} for two-stage stochastic linear optimization. That is, under mild assumptions, we show that the optimal cost and optimal first-stage decisions obtained from the multi-policy approximation  will converge almost surely to those of the underlying two-stage stochastic linear optimization problem as the number of data points tends to infinity (Theorem~\ref{thm:mp:conv}). From a practical perspective, such a guarantee provides  assurance that any bias or suboptimality of two-stage distributionally robust optimization with the type-$\infty$ Wasserstein ambiguity set, as well as that of the multi-policy approximation, disappear as more data is obtained. 
%

\end{itemize}

In addition to the above methodological results, we provide  numerical evidence that the multi-policy approximation can also be attractive in practice. First, in a two-stage network inventory management problem without relatively complete recourse, and across different probability distributions and various sizes of datasets, we show that the approximation produces first-stage decisions which significantly outperform those produced by alternative methods in feasibility (compared to SAA) and average cost (compared to other distributionally robust optimization approaches). 
Second, in a two-stage hospital scheduling problem, we show that the approach matches the out-of-sample performance of state-of-the-art approximation algorithms for two-stage distributionally robust optimization with the type-1 Wasserstein ambiguity set. 

Distributionally robust optimization with the type-$\infty$ Wasserstein ambiguity set was first studied in \cite{bertsimas2018multistage}, where this class of problems was shown to be equivalent to a robust optimization problem over multiple uncertainty sets. Robust optimization problems over multiple uncertainty sets, which we refer to as sample robust optimization, have also been studied in the literature in other settings \citep{erdougan2006ambiguous,erdougan2007two,xu2012distributional}.  In the context of data-driven multi-stage stochastic linear optimization, \cite{bertsimas2018multistage} showed under certain conditions that sample robust optimization is asymptotically optimal with respect to the underlying stochastic problem. While that paper showed that these sample robust optimization problems can be approximated by restricting to linear or piecewise-linear decision rules, {it does not analyze the tightness of these approximations}.

The present paper differs from the aforementioned literature in several significant ways. Most importantly,  
this work considers a particular approximation scheme which exploits the unique structure of two-stage sample robust optimization problems, and proves that its approximation gap converges to zero as the number of data points tends to infinity. 
Moreover, we also strengthen the understanding of the asymptotic behavior of sample robust optimization in the case of  two-stage  stochastic linear optimization problems with general (not necessarily light-tailed or continuous) probability distributions. Specifically, 
  we show that two-stage sample robust optimization is asymptotically optimal under nearly-identical assumptions to those of SAA (Theorem~\ref{thm:mp:conv}).

In view of the above discussion, the aim of this paper is \emph{not}  to motivate Wasserstein-based  two-stage distributionally robust optimization  from the perspective of  performance guarantees, but rather to present a scalable and asymptotically-optimal approximation scheme for solving a class of these problems. 
For the interested reader, in Appendix~\ref{appx:finite_sample}, we provide a review of existing probabilistic performance guarantees for distributionally robust optimization with the type-$\infty$ Wasserstein ambiguity set. 
Using those results, we also show therein a stylized example in which the first-stage decisions obtained from distributionally robust optimization with the type-$\infty$ Wasserstein ambiguity set will provably outperform those obtained by SAA.


A desirable feature of the approximation scheme and asymptotic optimality guarantees from this paper is that they apply to two-stage sample robust optimization problems in which partial knowledge about the support of the random variables is incorporated via a polyhedral set $\Xi \subseteq \R^d$. The set $\Xi \subseteq \R^d$ can be used by the decision-maker to incorporate prior knowledge that eliminates nonsensical scenarios that would render the robust optimization problem overly conservative. For example, this set can be used by the decision-maker to incorporate prior knowledge that capacities in a transportation network will be nonnegative (\eg Example~\ref{ex:support}) or that demands will not exceed an upper bound (\eg Section~\ref{sec:mp:cap_lot}).  Importantly,  the incorporation of such prior knowledge can be essential in order for the two-stage sample robust optimization problem to be feasible, as shown in Example~\ref{ex:support}.   After an earlier version of this paper was circulated online, a follow-up work by \cite{xie2020tractable} suggested reformulations and complexity results for two-stage distributionally robust optimization with type-$\infty$ Wasserstein ambiguity sets in the case of $\Xi =\R^d$.

Independently to our work, \cite{chen2019RSO} consider a similar event-wise adaptation algorithm for a class of distributionally robust optimization problems with event-wise ambiguity sets. In contrast, they do not analyze if or when such an approach is asymptotically optimal. In this paper, we propose and analyze the multi-policy approximation {approach} for two-stage distributionally robust optimization with the type-$\infty$ Wasserstein ambiguity set, which was not initially considered by \cite{chen2019RSO}. By utilizing the local structure of this class of problems, we show that the multi-policy approximation with linear decision rules is asymptotically optimal for the first time (Theorem~\ref{thm:mp:conv}), including for problems without relatively complete recourse.

Our paper is organized as follows. 
Section \ref{sec:mp:prelim} introduces the sample robust optimization approach for two-stage linear optimization.   
Section~\ref{sec:mp:approx} proposes the multi-policy approximation scheme using linear decision rules for two-stage sample robust optimization problems, and analyzes its tractability. 
Section \ref{sec:mp:asymptotic} establishes the asymptotic optimality of the multi-policy approximation scheme. 
Section~\ref{sec:mp:experiments} presents computational experiments. We conclude this paper in Section~\ref{sec:mp:conclusion}.


\subsubsection*{Notation} \label{sec:mp:Intro:notation}
We represent vectors and matrices by bold lowercase and uppercase letters, such as $\bx \in \R^n$ and $\bbt \in \R^{m \times n}$. For any integer $N\in \N$, we let $[N]$ be shorthand for the set $\{1,\ldots,N\}$. The space of all functions of the form $\by(\cdot): \R^d \to \R^r$ is given by  $\mathcal{R}^{d,r}$. 
The closed ball of radius $\epsilon \ge 0$ centered at a vector $\hat{\bz} \in \R^f$ is denoted by 
$B(\hat{\bz},\epsilon) \triangleq \{\bz \in \R^{f}: \|\bz - \hat{\bz}\| \le \epsilon\},$
where $\norm{\cdot}$ refers to any $\ell_p$-norm, and  $\| \cdot \|_*$ is its dual norm. 
 For any nonempty convex set $S \subseteq \R^f$, its relative interior is $\relint(S) \triangleq \{\bz \in S: \; \forall \hat{\bz} \in S, \;\exists \lambda > 1: \; \lambda \bz + (1-\lambda) \hat{\bz} \in S\}.$ 
Given two sets $S, T\subseteq \R^f$, their Minkowski sum is $S+T \triangleq \{ \bz + \bz':\; \bz \in S, \bz' \in T\}$.
Further notation and results from polyhedral theory are found in Appendix~\ref{appx:polyhedral}.


\section{Problem Setting} \label{sec:mp:prelim}
We consider two-stage stochastic linear optimization problems of the form
\begin{equation}\label{prob:mp:first_stage}\tag{OPT}
\begin{aligned}[t]
v^* \; \triangleq  \quad &\underset{\bx \in \R^n}{\textnormal{minimize}}&& \left \{ V^*(\bx) \triangleq \bc^\intercal \bx +  \Exp[Q(\bx,\bxi)]  \right \}.
\end{aligned}
\end{equation}
The first-stage decision $\bx \in \R^n$ is selected before the realization of any uncertainty, and $\bxi \in \R^d$ is a random variable with an underlying probability distribution. 
Without loss of generality, we assume that any deterministic linear constraints on the first-stage decision are embedded in the second-stage cost function. Given a first-stage decision and realization of the random variable, the second-stage cost is given by
\begin{equation*}
\begin{aligned}
Q(\bx,\bxi) \triangleq \quad 
&\underset{\by \in \R^r}{\text{minimize}} &&\bq^\intercal \by \\
&\textnormal{subject to}&& \bbt \bx + \bbw \by \ge \bh(\bxi),
\end{aligned}
\end{equation*}
where $\bq\in\R^r$, $\bbt\in \R^{m\times n}$, $\bbw\in \R^{m\times r}$, and the right-hand side of the constraints is an affine function of the form $\bh(\bxi) = \bh^0 + \bbh \bxi \in \R^m$. Following standard convention, the objective value of the above linear optimization problem is equal to infinity whenever there is no feasible second-stage (recourse) decision which satisfies its constraints. 

We  assume throughout this paper that the probability distribution and support of the random variable are unknown. Instead, our only information consists of historical data $\bxi{}^1,\ldots,\bxi{}^N \in \R^d$, which are independent and identical distributed samples of the underlying random variable, 
as well as a polyhedral set $\Xi \triangleq \{ \bzeta \in \R^d: \bbg \bzeta \ge \bg^0 \}$ which is a conservative superset of the support of the random variable, \ie $\Prb(\bxi \in \Xi) = 1$, where $\bbg\in\R^{\tilde{m}\times d}$ and $\bg^0\in \R^{\tilde{m}}$. A further discussion on the conservative superset of the support is found at the end of this section.  

In this paper, we investigate the following data-driven approach to Problem~\eqref{prob:mp:first_stage}. 
Given historical data, we first construct multiple uncertainty sets, one around each historical data point. Then, we choose a first-stage decision and estimate the optimal cost $v^*$ of the stochastic problem by solving the following robust optimization problem:
\begin{equation}\label{prob:mp:sro} \tag{SRO}
\begin{aligned}
\hat{v}_N^{\textnormal{SRO}} \triangleq \quad & \underset{\bx \in \R^n}{\textnormal{minimize}} && \left \{ \hat{V}_N^{\textnormal{SRO}}(\bx) \; \triangleq \; \bc^\intercal \bx + \frac{1}{N} \sum_{i=1}^N \sup_{\bzeta \in \mathcal{U}_N^i} Q(\bx,{\bzeta})  \right \}.
\end{aligned}
\end{equation}
Intuitively speaking, the above robust optimization problem finds a first-stage decision by averaging over the historical data; however, each historical data point is perturbed by an adversary within its uncertainty set. 
We focus on solving Problem~\eqref{prob:mp:sro} when the uncertainty sets are constructed as closed balls that are centered at each data point and intersected with the polyhedron $\Xi$, 
\begin{equation*}
\mathcal{U}^i_N \triangleq \{\bzeta\in \Xi: \; \|\bzeta-\bxi{}^i \|\leq \epsilon_N\},
\end{equation*}
where $\epsilon_N \ge 0$ is a parameter, chosen by the decision maker, which controls the size of the uncertainty sets. Under this construction of the uncertainty  sets, Problem~\eqref{prob:mp:sro} is equivalent to two-stage distributionally robust optimization the type-$\infty$ Wasserstein ambiguity set \citep[Proposition 3]{bertsimas2018multistage}.

We observe that Problem~\eqref{prob:mp:sro} is computationally demanding to solve exactly. Indeed, evaluating the second-stage cost $\hat{V}^{\textnormal{SRO}}_N(\bx)$ is NP-hard, even when $N = 1$, 
as it consists of maximizing a piecewise-linear convex function over a polyhedron.\footnote{To see why evaluating the optimization problem is NP-hard, consider the case where the polyhedron $\Xi \subseteq \R^d$ is bounded and $\epsilon_N = \infty$. Then the complexity results follows immediately from \citet[Theorem 5]{feige2007robust}. } The aim of this paper is to develop and analyze an approximation algorithm for Problem~\eqref{prob:mp:sro}. 
\begin{remark}
Problem~\eqref{prob:mp:sro} with these uncertainty sets can be interpreted as a robust generalization of the well-known sample average approximation (SAA): 
\begin{equation}\label{prob:mp:saa}\tag{SAA}
\begin{aligned}[t]
\hat{v}_N^{\textnormal{SAA}} \triangleq \; \min_{\bx \in \R^n} \left \{ \hat{V}_N^{\textnormal{SAA}}(\bx) \triangleq  \bc^\intercal \bx +  \frac{1}{N} \sum_{i=1}^NQ(\bx,{\bxi}{}^i)  \right \}. 
\end{aligned}
\end{equation}
Indeed, we readily observe for the special case of $\epsilon_N = 0$ that Problem~\eqref{prob:mp:sro} and Problem~\eqref{prob:mp:saa} are equivalent. 
\end{remark}
\begin{remark} \label{remark:support} 
From a modeling perspective,  the set $\Xi \subseteq \R^d$ can be used by the decision-maker to incorporate prior knowledge that eliminates nonsensical scenarios that would render the robust optimization problem overly conservative.  
More importantly, in some cases, the incorporation of such prior knowledge is strictly needed to ensure that   Problem~\eqref{prob:mp:sro} has feasible first-stage decisions. We illustrate this observation through the following example.

\begin{example} \label{ex:support} Consider a network flow problem with uncertain capacities. In the first-stage, we are tasked with selecting an inventory level $x_f \ge 0$ in each of $n$ facilities with the goal of satisfying known demands $d_j \ge 0$ in each of $m$ destinations. After the inventory levels are chosen, the capacities $\xi_{ij} \ge 0$ between facility $i$ and destination $j$ are observed. The demands in the destinations are satisfied by transferring inventory from the facilities, and the transfer quantity from facility $i$ to destination $j$ is captured by the second-stage decision variable $0 \leq y_{ij} \leq \xi_{ij}.$ 
	If there are historical data points where a capacity $\xi_{ij}$ of some edge is exactly zero, then Problem~\eqref{prob:mp:sro} will be infeasible for any positive radius $\epsilon_N > 0$ unless the set $\Xi$ enforces nonnegativity for the capacities along those edges.
	\end{example}
\end{remark}
\section{The Multi-Policy Approximation Scheme} \label{sec:mp:approx}
In this section, we present a simple approximation algorithm, based on overlapping linear decision rules, for solving Problem~\eqref{prob:mp:sro}. 
To motivate this approach, we start in Section~\ref{sec:mp:single_policy} by presenting a standard {single-policy} approximation. We then introduce the multi-policy approximation in Section~\ref{sec:mp:mp_approach}, which we show can obtain better approximations of Problem \eqref{prob:mp:sro} with a similar computational cost.
\subsection{Single-policy approximation} \label{sec:mp:single_policy}
We observe that Problem~\eqref{prob:mp:sro} can be equivalently represented as a single optimization problem where the recourse decision is a function of the uncertainty:
\begin{equation*}
\begin{aligned}
\hat{v}_N^{\textnormal{SRO}} \triangleq \quad &\underset{\bx \in \R^n,\; \by(\cdot) \in \mathcal{R}^{d,r}}{\text{minimize}} &&\bc^\intercal \bx + \frac{1}{N} \sum_{i=1}^{N} \sup_{\bzeta \in \mathcal{U}^i_N} \bq^\intercal\by(\bzeta) \\
&\text{subject to}&&\bbt \bx + \bbw \by(\bzeta) \geq \bh(\bzeta)\quad \forall \bzeta \in \cup_{i=1}^N \mathcal{U}^i_N.
\end{aligned}
\end{equation*}

\noindent A common approximation technique for two-stage problems is to restrict $\Rfun^{d,r}$ to a smaller space of policies over which we can efficiently optimize. 
More precisely, let $\Pi \subseteq \Rfun^{d,r}$ denote a restricted space of second-stage recourse policies. Then we obtain an approximation of Problem~\eqref{prob:mp:sro} by solving
\begin{equation}\label{eqn:mp:single_policy}\tag{SP}
\begin{aligned}
&\underset{\bx \in \R^n,\; \by(\cdot) \in \Pi}{\text{minimize}} &&\bc^\intercal \bx + \frac{1}{N} \sum_{i=1}^{N} \sup_{\bzeta \in \mathcal{U}^i_N} \bq^\intercal\by(\bzeta) \\
&\text{subject to}&&\bbt \bx + \bbw \by(\bzeta) \geq \bh(\bzeta)\quad \forall \bzeta \in \cup_{i=1}^N \mathcal{U}^i_N.
\end{aligned}
\end{equation}

\noindent We observe that any first-stage decision which is feasible for Problem \eqref{eqn:mp:single_policy} will be feasible for Problem~\eqref{prob:mp:sro}, and the optimal cost  of Problem \eqref{eqn:mp:single_policy} provides an upper bound approximation on the optimal cost of Problem \eqref{prob:mp:sro}. 

One choice of policies is the space of {linear decision rules}, denoted by
\begin{equation*}
\Piaff=\{\by(\cdot) \in \Rfun^{d,r}: \exists \by^0\in\R^{r},\bby\in \R^{r\times d}: \; \by(\bxi)=\by^0+\bby \bxi \}.
\end{equation*}
It has been shown that restrictions to linear decision rules in many two-stage robust optimization can result in tractable optimization problems  \citep{ben2004adjustable}. We note that the single-policy approximation can  be applied with alternative restricted spaces of policies other than $\mathcal{L}$, such as $K$-adaptivity \citep{hanasusanto2015k}, finite adaptability \citep{bertsimas2010finite}, and lifted linear decision rules \citep{Chen2009}. 
The approximation gap between a single-policy approximation and Problem~\eqref{prob:mp:sro} will generally decrease as the set of possible policies gets larger. 

Despite its tractability, the single-policy approximation has several weaknesses. First, Problem \eqref{eqn:mp:single_policy} may have fewer, if any, feasible first-stage decisions compared to Problem \eqref{prob:mp:sro}. This is due to the fact that some feasible first-stage decisions of Problem~\eqref{prob:mp:sro}, \ie the first-stage decisions that have a feasible second-stage recourse policy $\by(\cdot)\in\Rfun^{d,r}$,  may no longer be feasible after restricting $\Rfun^{d,r}$ to $\Pi$. Second, the approximation gap between the optimal cost of the single-policy approximation and the fully-adaptive problem can be significant. Characterizing the approximation gap resulting from single-policy approximations remains an active area of research in two-stage robust linear optimization. Importantly, unless the space $\Pi$ is very rich, we do not expect the objective value of the single-policy approximation to converge to that of the underlying stochastic optimization problem as $N \to \infty$ and $\epsilon_N \to 0$. 

\subsection{Multi-policy approximation} \label{sec:mp:mp_approach}

Motivated by the shortcomings of the single-policy approximation, we consider a different approach for approximating Problem~\eqref{prob:mp:sro}. The following approach, henceforth referred to as the multi-policy approximation, uses the same idea of restricting the space of recourse policies. In contrast to the single-policy approximation, the multi-policy approximation additionally allows for different heuristic policies to be optimized for the different uncertainty sets. 
Specifically, the multi-policy approximation uses the restricted family of policies $\Pi$, and optimizes over $N$ recourse policies $\by^1(\cdot),\ldots,\by^N(\cdot) \in \Pi$ such that $\by^i(\cdot)$ is feasible for all possible realizations in the $i$-th uncertainty set. 
The formulation of the multi-policy approximation is as follows:
\begin{equation}\label{prob:mp:multi_policy}\tag{MP}
\begin{aligned}
\hat{v}_N^{\textnormal{MP}}\triangleq \quad &\underset{\substack{\bx \in \R^n\\ \by^1(\cdot),\ldots,\by^N(\cdot) \in \Pi}}{\text{minimize}} &&\bc^\intercal \bx + \frac{1}{N} \sum_{i=1}^{N} \sup_{\bzeta \in \mathcal{U}^i_N} \bq^\intercal\by^i(\bzeta) \\
&\text{subject to}&&\bbt \bx + \bbw \by^i(\bzeta) \geq \bh(\bzeta) \quad \forall \bzeta \in \mathcal{U}^i_N, \; i \in [N].
\end{aligned}
\end{equation}

At first glance, the multi-policy approximation closely resembles the single-policy approximation. In fact, the two approximations have the same type and number of robust constraints, albeit the multi-policy approximation has $N$ times as many recourse policies over which to optimize. We readily observe however that the multi-policy approximation is a generalization of the single-policy approximation; indeed, the two problems are identical if the recourse policies $\by^1(\cdot),\ldots,\by^N(\cdot) $ are restricted to be equal to one another. The crucial distinction is that the multi-policy approximation gives the flexibility to find recourse policies that are locally optimal for the various uncertainty sets (see Figure~\ref{fig:mp:multi_policy_visualization}). 
\begin{figure}[t]
\centering
	\includegraphics[width=0.32\textwidth]{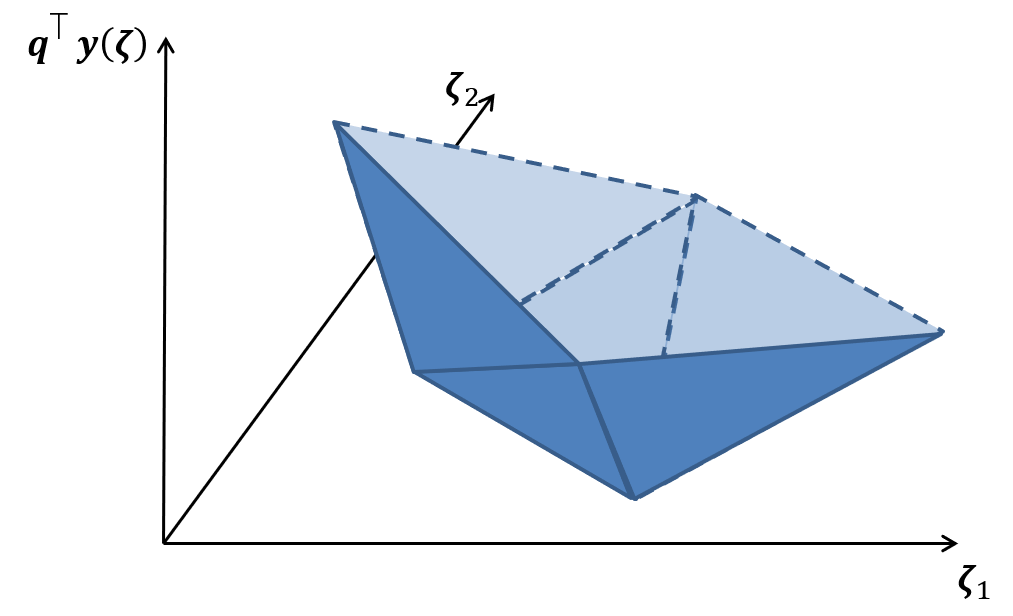}	
	\includegraphics[width=0.32\textwidth]{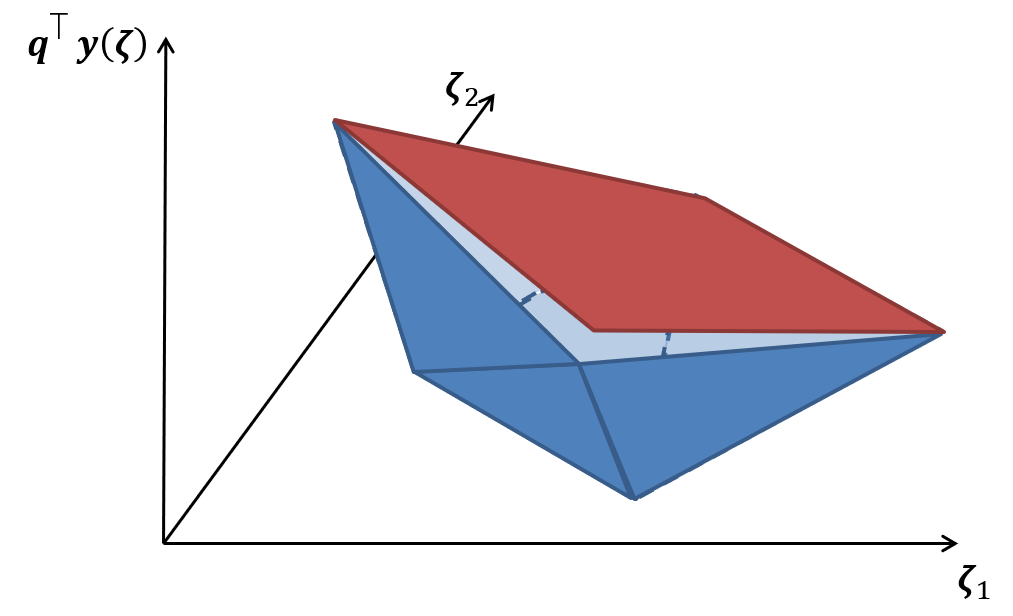}	
	\includegraphics[width=0.32\textwidth]{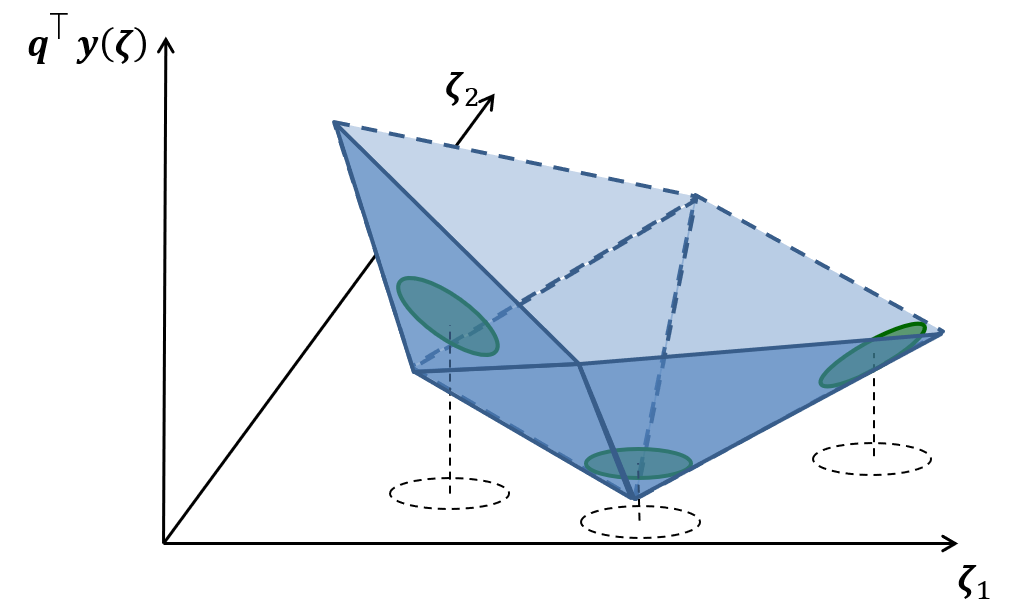}	
	\caption{Visualization of approximations for two-stage sample robust optimization. \underline{Left}: the objective value under the optimal recourse policy $\by(\cdot) \in \Rfun^{d,r}$. \underline{Center}: the objective value of the single-policy approximation using a linear decision rule. \underline{Right}: the objective value of the multi-policy approximation using linear decision rules.}\label{fig:mp:multi_policy_visualization}		
\end{figure}

Since the multi-policy approximation is a generalization of the single-policy approximation, it will never be more restrictive. Indeed, every feasible first-stage decision to the single-policy approximation is feasible for the multi-policy approximation, and the objective value of the multi-policy approximation is never greater than that of the single-policy approximation. We now show that the multi-policy approximation provides a valid upper approximation of Problem~\eqref{prob:mp:sro}. From this point onward, we use the notation $\hat{V}^{\textnormal{SP}}_N(\bx)$ and $\hat{V}^{\textnormal{MP}}_N(\bx)$ to denote the optimal cost of Problems~\eqref{eqn:mp:single_policy} and \eqref{prob:mp:multi_policy}, respectively, with the first-stage decision $\bx \in \R^n$ (where $\hat{V}^{\textnormal{SP}}_N(\bx)$, $\hat{V}^{\textnormal{MP}}_N(\bx)$ are set to infinity if the first-stage decision is infeasible for the respective problems). 

\begin{theorem} \label{thm:mp:hierarchy}
$\hat{v}^{\textnormal{SRO}}_N \leq \hat{v}^{\textnormal{MP}}_N \le \hat{v}^{\textnormal{SP}}_N $ and $\hat{V}_N^{\textnormal{SRO}}(\bx) \le \hat{V}^{\textnormal{MP}}_N(\bx) \le \hat{V}^{\textnormal{SP}}_N(\bx)$ for all $\bx \in \R^n$. 
\end{theorem}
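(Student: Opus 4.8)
The plan is to prove the stronger pointwise chain $\hat V^{\textnormal{SRO}}_N(\bx) \le \hat V^{\textnormal{MP}}_N(\bx) \le \hat V^{\textnormal{SP}}_N(\bx)$ for every $\bx \in \R^n$, and then recover the inequalities between optimal costs by minimizing each term over $\bx$, since $\hat v^{\textnormal{SRO}}_N = \inf_{\bx \in \R^n} \hat V^{\textnormal{SRO}}_N(\bx)$, $\hat v^{\textnormal{MP}}_N = \inf_{\bx \in \R^n} \hat V^{\textnormal{MP}}_N(\bx)$, and $\hat v^{\textnormal{SP}}_N = \inf_{\bx \in \R^n} \hat V^{\textnormal{SP}}_N(\bx)$ by definition; monotonicity of the infimum then yields $\hat v^{\textnormal{SRO}}_N \le \hat v^{\textnormal{MP}}_N \le \hat v^{\textnormal{SP}}_N$.

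For the inequality $\hat V^{\textnormal{MP}}_N(\bx) \le \hat V^{\textnormal{SP}}_N(\bx)$, I would argue that \eqref{prob:mp:multi_policy} is a relaxation of \eqref{eqn:mp:single_policy} at every fixed $\bx$. If no $\by(\cdot) \in \Pi$ is feasible for \eqref{eqn:mp:single_policy} at $\bx$, then $\hat V^{\textnormal{SP}}_N(\bx) = \infty$ and there is nothing to prove. Otherwise, for any feasible $\by(\cdot) \in \Pi$, the tuple $(\by^1(\cdot), \ldots, \by^N(\cdot)) = (\by(\cdot), \ldots, \by(\cdot))$ is feasible for \eqref{prob:mp:multi_policy} at $\bx$: the constraint $\bbt \bx + \bbw \by(\bzeta) \ge \bh(\bzeta)$ holding for all $\bzeta \in \cup_{i=1}^N \mathcal{U}^i_N$ in particular holds for all $\bzeta \in \mathcal{U}^i_N$ and each $i \in [N]$. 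This tuple attains the same objective value in \eqref{prob:mp:multi_policy} as $\by(\cdot)$ does in \eqref{eqn:mp:single_policy}, so taking the infimum over all feasible $\by(\cdot)$ gives $\hat V^{\textnormal{MP}}_N(\bx) \le \hat V^{\textnormal{SP}}_N(\bx)$.

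For the inequality $\hat V^{\textnormal{SRO}}_N(\bx) \le \hat V^{\textnormal{MP}}_N(\bx)$, I would use weak duality between a decision-rule policy and the fully adaptive recourse. If $\bx$ is infeasible for \eqref{prob:mp:multi_policy}, then $\hat V^{\textnormal{MP}}_N(\bx) = \infty$ and the inequality is immediate. Otherwise, fix any feasible tuple $(\by^1(\cdot), \ldots, \by^N(\cdot))$ for \eqref{prob:mp:multi_policy} at $\bx$. For each $i \in [N]$ and each $\bzeta \in \mathcal{U}^i_N$, the vector $\by^i(\bzeta) \in \R^r$ satisfies $\bbt \bx + \bbw \by^i(\bzeta) \ge \bh(\bzeta)$, hence is feasible for the linear program defining $Q(\bx, \bzeta)$, so $Q(\bx, \bzeta) \le \bq^\intercal \by^i(\bzeta)$. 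Taking the supremum over $\bzeta \in \mathcal{U}^i_N$, averaging over $i \in [N]$, and adding $\bc^\intercal \bx$ gives $\hat V^{\textnormal{SRO}}_N(\bx) \le \bc^\intercal \bx + \frac{1}{N} \sum_{i=1}^N \sup_{\bzeta \in \mathcal{U}^i_N} \bq^\intercal \by^i(\bzeta)$; taking the infimum over feasible tuples yields $\hat V^{\textnormal{SRO}}_N(\bx) \le \hat V^{\textnormal{MP}}_N(\bx)$.

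The argument is a short chain of relaxation and weak-duality observations, so I do not anticipate a genuine obstacle. The only points requiring care are the bookkeeping of the $+\infty$ values associated with first-stage decisions that are infeasible for \eqref{eqn:mp:single_policy} or \eqref{prob:mp:multi_policy}, which the case distinctions above handle, and the fact that the infima over recourse policies need not be attained — this is why the argument works with arbitrary feasible policies (equivalently, with $\varepsilon$-optimal policies and $\varepsilon \downarrow 0$) rather than presupposing a minimizer.
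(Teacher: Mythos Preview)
Your proof is correct. The second inequality $\hat V^{\textnormal{MP}}_N(\bx) \le \hat V^{\textnormal{SP}}_N(\bx)$ is handled exactly as the paper does (it is immediate from the definitions). For the first inequality $\hat V^{\textnormal{SRO}}_N(\bx) \le \hat V^{\textnormal{MP}}_N(\bx)$, you take a genuinely different and more direct route: you work with the original definition $\hat V^{\textnormal{SRO}}_N(\bx) = \bc^\intercal \bx + \frac{1}{N}\sum_i \sup_{\bzeta \in \mathcal{U}^i_N} Q(\bx,\bzeta)$ and simply observe that each $\by^i(\bzeta)$ is feasible for the second-stage LP, hence $Q(\bx,\bzeta) \le \bq^\intercal \by^i(\bzeta)$. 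The paper instead uses the policy reformulation of Problem~\eqref{prob:mp:sro} over $\by(\cdot) \in \mathcal{R}^{d,r}$ and explicitly stitches the overlapping $\by^i(\cdot)$ into a single measurable policy $\bar\by(\cdot)$ by selecting, at each $\bzeta$, the cheapest among the $\by^i(\bzeta)$ that are feasible there. Your argument is shorter and avoids this construction; the paper's construction, on the other hand, makes explicit that the $N$ overlapping decision rules can be combined into one fully adaptive policy, which motivates the anticipativity discussion following the theorem. You are also slightly more careful than the paper in not presupposing that the infimum over recourse policies is attained.
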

\begin{proof}{Proof. }
Consider any first-stage decision $\bx \in \R^n$. If $\hat{V}^{\text{MP}}_N(\bx)=+\infty$, then the inequality $\hat{V}_N^{\textnormal{SRO}}(\bx) \le \hat{V}^{\text{MP}}_N(\bx)$ is trivially satisfied. Otherwise, assume there exist second-stage decision rules $\by^{1}(\cdot),\ldots,\by^N(\cdot) \in \Pi$ which are optimal for the multi-policy approximation with the first-stage decision $\bx \in \R^n$. For any realization $\bzeta \in \cup_{i=1}^N \mathcal{U}^i_N$, let us define the corresponding index $i(\bzeta)$ by
\begin{align*}
i(\bzeta) \triangleq \argmin_{i \in \{1,\ldots,N\}} \left \{ \bq^\intercal \by^{i}(\bzeta):  \;\bzeta \in \mathcal{U}^i_N
 \right\}.
\end{align*}
(If there are multiple optimal indices, choose the index which is smallest.) We define a new recourse policy $\bar{\by}(\cdot) \in \Rfun^{d,r}$ as
\begin{align*}
\bar{\by}(\bzeta) &\triangleq \begin{cases}
\by^{i(\bzeta)}(\bzeta),&\text{if } \bzeta \in \cup_{i=1}^N \mathcal{U}^i_N,\\
\bzero,&\text{otherwise}.
\end{cases}
\end{align*}
It follows from construction that the tuple $(\bx, \bar{\by}(\cdot))$ is feasible for Problem~\eqref{prob:mp:sro}. 
Therefore, 
\begin{align*}
\hat{V}_N^{\textnormal{SRO}}(\bx) \le \bc^\intercal \bx + \frac{1}{N} \sum_{i=1}^N \sup_{\bzeta \in \mathcal{U}^i_N} \bq^\intercal \bar{\by}(\bzeta) \le \bc^\intercal \bx + \frac{1}{N} \sum_{i=1}^N \sup_{\bzeta \in \mathcal{U}^i_N} \bq^\intercal \by^i(\bzeta) \le \hat{V}_N^{\textnormal{MP}}(\bx).
\end{align*}
In all cases, we have shown that the inequality $\hat{V}_N^{\textnormal{SRO}}(\bx) \le \hat{V}^{\textnormal{MP}}_N(\bx)$ holds for all first-stage decisions $\bx \in \R^n$. This immediately implies that $\hat{v}^{\textnormal{SRO}}_N \leq \hat{v}^{\textnormal{MP}}_N $. The final inequalities ($\hat{v}^{\textnormal{MP}}_N \le \hat{v}^{\textnormal{SP}}_N$ and $\hat{V}^{\textnormal{MP}}_N(\bx) \le \hat{V}^{\textnormal{SP}}_N(\bx)$ for all $\bx \in \R^n$) follow directly from the definitions of Problems~\eqref{eqn:mp:single_policy} and \eqref{prob:mp:multi_policy}, which concludes the proof. 
\Halmos \end{proof}

The above result demonstrates that Problem~\eqref{prob:mp:multi_policy} provides a valid upper approximation of Problem~\eqref{prob:mp:sro}, and any feasible first-stage decision to Problem~\eqref{prob:mp:multi_policy} will also be feasible for Problem~\eqref{prob:mp:sro}. 
More generally, Theorem~\ref{thm:mp:hierarchy} suggests that the idea of \emph{anticipativity} need not apply to the second-stage decision variables in two-stage problems. Intuitively speaking, after the random variables are observed, there is no future uncertainty to anticipate in two-stage problems. 
Thus, Theorem~\ref{thm:mp:hierarchy} shows that one can obtain a valid upper approximation of a two-stage problem by considering overlapping decision rules, even those which prescribe contradicting second-stage decisions from identical realizations of the uncertain parameters. In practice, having overlapping second-stage decision rules is not itself a significant issue: indeed, given a first-stage decision $\bx$ and a realization of the random variables $\bxi$, a second-stage decision $\by$ can always be found by solving a linear optimization problem, or by applying one of the second-stage decision rules found by the multi-policy approximation.

We conclude this section by showing that the multi-policy approximation emits a tractable representation. 
Specifically, using convex duality theory, Problem~\eqref{prob:mp:multi_policy} with linear decision rules can be reformulated into a finite-dimensional optimization problem. The proof is a standard application of Lagrangian duality and is thus omitted. 

\begin{proposition}\label{prop:mp:reformulation}
 If $\Pi = \mathcal{L}$, then Problem~\eqref{prob:mp:multi_policy} is equivalent to 
 	\begin{equation*}
	\begin{aligned}[t]
	& \underset{\substack{\bx\in\Real^n,\by{}^{i,0}\in \Real^{r},\bby{}^{i}\in \Real^{r\times d},\\
			\theta{}^{i,j}\in\R_+,\brho{}^{i,j}\in \R^{\tilde{m}}_+\\
			\forall i\in[N],j\in\{0,1,\ldots,m\}\\\\
	}}{\textnormal{minimize}}&& \bc^\intercal \bx+\frac{1}{N} \sum_{i=1}^N\left(\theta^{i,0}\epsilon_N  +(\hat{\bg}{}^i)^\intercal\brho{}^{i,0}+\bq^\intercal (\by{}^{0,i} + \bby{}^{i} \bxi{}^i)\right)\\
	& \textnormal{subject to} && \norm{( \bby^{i})^\intercal \bq + \bbg^\intercal\brho{}^{i,0}}_* \le \theta{}^{i,0}, && i \in [N],\\
	&&&\bei_j^\intercal\left[\bbt\bx+\bbw \left( \by{}^{0,i} + \bby{}^{i} \bxi{}^i\right)-\bh(\bxi{}^i)\right]\geq \theta^{i,j}\epsilon_N+ (\hat{\bg}{}^i)^\intercal \brho{}^{i,j}, &&  j\in [m],\; i \in [N],\\
	&&&
	\norm{(\bbh- \bbw\bby^{i})^\intercal\bei_j  + \bbg^\intercal \brho{}^{i,j}}_* \le \theta{}^{i,j},&&  j\in [m],\; i \in [N],
	\end{aligned}
	\end{equation*}
	where $\norm{\cdot}_*$ is the dual norm of $\norm{\cdot}$, $\bei_j$ is the $j$th vector of the identity matrix, and $\hat{\bg}{}^i \equiv \bbg \bxi{}^i - \bg^0\geq0$.
\end{proposition}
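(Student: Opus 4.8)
The plan is to substitute the linear decision rule parametrization $\by^i(\bzeta) = \by^{i,0} + \bby^i \bzeta$ into Problem~\eqref{prob:mp:multi_policy} and observe that, once $\bx$ is fixed, the resulting problem decouples across the $N$ uncertainty sets. For each $i \in [N]$, the semi-infinite constraint $\bbt\bx + \bbw\by^i(\bzeta) \ge \bh(\bzeta)$ for all $\bzeta \in \mathcal{U}^i_N$ reduces row by row to $m$ scalar constraints $\sup_{\bzeta \in \mathcal{U}^i_N} \bei_j^\intercal(\bbh - \bbw\bby^i)\bzeta \le \bei_j^\intercal(\bbt\bx + \bbw\by^{i,0} - \bh^0)$, while the contribution of the $i$-th block to the objective is $\sup_{\bzeta \in \mathcal{U}^i_N} \bq^\intercal \by^i(\bzeta)$. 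Both are worst-case values of a linear functional of $\bzeta$ over $\mathcal{U}^i_N = \{\bzeta : \bbg\bzeta \ge \bg^0,\ \|\bzeta - \bxi^i\| \le \epsilon_N\}$. The change of variables $\bzeta = \bxi^i + \bu$ rewrites this set as $\{\bu : \bbg\bu \ge -\hat{\bg}^i,\ \|\bu\| \le \epsilon_N\}$ with $\hat{\bg}^i = \bbg\bxi^i - \bg^0 \ge \bzero$, and separates the constant term $\bxi^i$ out of each supremum, producing exactly the affine expressions $\bq^\intercal(\by^{i,0} + \bby^i\bxi^i)$ and $\bei_j^\intercal(\bbt\bx + \bbw(\by^{i,0} + \bby^i\bxi^i) - \bh(\bxi^i))$ appearing in the claimed reformulation.

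The key step is to dualize each inner supremum $\sup\{\bd^\intercal\bu : \bbg\bu \ge -\hat{\bg}^i,\ \|\bu\| \le \epsilon_N\}$. I would attach a multiplier $\brho \ge \bzero$ to the polyhedral constraints, keep the norm ball explicit, and use the identity $\sup_{\|\bu\| \le \epsilon_N}(\bd + \bbg^\intercal\brho)^\intercal\bu = \epsilon_N\|\bd + \bbg^\intercal\brho\|_*$ to obtain, via strong (conic) duality, $\sup\{\bd^\intercal\bu : \cdots\} = \min_{\brho \ge \bzero}\{\epsilon_N\|\bd + \bbg^\intercal\brho\|_* + (\hat{\bg}^i)^\intercal\brho\}$. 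Taking $\bd = (\bby^i)^\intercal\bq$ yields the $i$-th objective term together with the constraint $\|(\bby^i)^\intercal\bq + \bbg^\intercal\brho^{i,0}\|_* \le \theta^{i,0}$ after introducing the epigraph variable $\theta^{i,0}$; taking $\bd = (\bbh - \bbw\bby^i)^\intercal\bei_j$ for each $j \in [m]$ turns the $j$-th robust constraint into the pair indexed by $(i,j)$, with multipliers $\brho^{i,j}$ and epigraph variables $\theta^{i,j}$. Because the outer problem is a minimization and every dualized quantity enters the objective and constraints monotonically (using $\epsilon_N \ge 0$), the inner minima over the $\brho$'s and the epigraph variables can be merged into the single minimization of the proposition; conversely, weak duality shows any feasible point of the finite-dimensional problem induces a feasible tuple of decision rules with no larger cost, so the two problems are equivalent (the infeasible case is trivial, both optimal values being $+\infty$).

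The main obstacle is justifying strong duality for the inner maximization uniformly in $i$: the set $\mathcal{U}^i_N$ is always nonempty (it contains $\bxi^i$) and compact, so each supremum is finite, but a Slater point of the $\bu$-formulation need not exist when $\bxi^i$ lies on the boundary of $\Xi$, i.e.\ when some coordinate of $\hat{\bg}^i$ vanishes. I would resolve this either by noting that for the polyhedral norms $\ell_1$ and $\ell_\infty$ the set $\mathcal{U}^i_N$ is itself a polyhedron, so LP duality applies with no constraint qualification, or for a general $\ell_p$-norm by restricting $\bu$ to the affine subspace spanned by the active rows of $\bbg$ at $\bxi^i$ (the implicit equalities), on which the remaining constraints admit a relative-interior point so that Fenchel/conic duality applies and the optimal multipliers extend to the full index set. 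This is the only subtlety; everything else is bookkeeping, which is why the paper omits the proof.
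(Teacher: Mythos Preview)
Your proposal is correct and is exactly the ``standard application of Lagrangian duality'' the paper alludes to before omitting the proof: substitute the linear decision rules, shift $\bzeta = \bxi^i + \bu$, and dualize each inner supremum over the intersection of the norm ball with the shifted polyhedron. Regarding the one obstacle you flag, there is a slightly cleaner resolution than the two you sketch: the point $\bu = \bzero$ always satisfies the linear constraints $\bbg\bu \ge -\hat{\bg}^i$ (because $\hat{\bg}^i \ge \bzero$) and lies strictly inside the norm ball whenever $\epsilon_N > 0$, so the refined Slater condition for mixed linear/conic programs---which only requires strict feasibility for the non-polyhedral part---holds uniformly in $i$, and the degenerate case $\epsilon_N = 0$ is trivial.
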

The type of optimization problem generated in this reformulation depends on the choice of the norm in the definition of the uncertainty sets. If the chosen norm is the $\ell_1$ or $\ell_\infty$, then the reformulation becomes a linear optimization problem; if the chosen norm is the $\ell_2$, we obtain a second-order conic optimization problem. In both cases, the resulting  optimization problem has $O(N)$ constraints and $O(N)$ variables, and is readily solved by a variety of off-the-shelf solvers.


\section{Asymptotic Optimality} \label{sec:mp:asymptotic}

In this section, we prove that the multi-policy approximation of two-stage sample robust optimization is asymptotically optimal. That is, provided that $\epsilon_N \to 0$ as more data is obtained, we show that the optimal cost and first-stage decisions of the multi-policy approximation with linear decision rules will converge almost surely to those of the underlying two-stage stochastic linear optimization problem (Theorem~\ref{thm:mp:conv}). From a practical perspective, such a guarantee provides assurance that any suboptimality of the multi-policy approximation disappears as more data is obtained. The guarantee can also be viewed as attractive from a theoretical standpoint, as it is established under mild probabilistic assumptions which are similar to those required for establishing the asymptotic optimality of Problem~\eqref{prob:mp:saa}. 

Our main result is the following:
\begin{theorem} \label{thm:mp:conv}
Let the following conditions hold: 
\begin{enumerate}[label={\bf [A\arabic*]},ref=A\arabic*]
\item \label{ass:linear_epsilon} 
$\Pi = \mathcal{L}$ and $\epsilon_N \to 0$ as $N \to \infty$. 
\item \label{ass:v_finite} 
 $v^*$ is finite and $\Exp[ \| \bxi \|] < \infty$. 
\item \label{ass:saa}
The set of optimal first-stage decisions for Problem~\eqref{prob:mp:first_stage} is nonempty and bounded. 
\item \label{ass:feas}
There exists a first-stage decision ${\bx} \in \R^n$ and a radius ${\epsilon} > 0$ such that, for all $\hat{\bzeta} \in \Xi$, there exists a  decision rule $\by^{\hat{\bzeta}}(\cdot) \in \mathcal{L}$ such that 
$\bbt {\bx} + \bbw {\by}^{\hat{\bzeta}}(\bzeta) \ge \bh(\bzeta)$ for all  $\bzeta \in B(\hat{\bzeta},{\epsilon}) \cap \Xi.$ 
\end{enumerate}
Then,
 \begin{align}
 \lim_{N \to \infty} \hat{v}_N^{\textnormal{SRO}} =  \lim_{N \to \infty} \hat{v}_N^{\textnormal{MP}} = v^* \quad \text{a.s.} \label{line:mp:conv}
 \end{align}
 Moreover, let $\hat{\bx}_N^{\textnormal{SRO}}$ and $\hat{\bx}_N^{\textnormal{MP}}$ be optimal first-stage decisions for Problems~\eqref{prob:mp:sro} and \eqref{prob:mp:multi_policy}, respectively. Then any accumulation point of  $\{ \hat{\bx}_N^{\textnormal{SRO}} \}_{N\in\N}$ or $\{ \hat{\bx}_N^{\textnormal{MP}} \}_{N\in\N}$ is almost surely an optimal first-stage decision for Problem~\eqref{prob:mp:first_stage}. 
\end{theorem}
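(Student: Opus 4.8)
The plan is to squeeze all three quantities together. By Theorem~\ref{thm:mp:hierarchy} we already have $\hat v_N^{\textnormal{SRO}}\le\hat v_N^{\textnormal{MP}}$, so it suffices to establish the two one-sided limits
\begin{equation*}
v^*\;\le\;\liminf_{N\to\infty}\hat v_N^{\textnormal{SRO}}
\qquad\text{and}\qquad
\limsup_{N\to\infty}\hat v_N^{\textnormal{MP}}\;\le\;v^*\qquad\text{a.s.},
\end{equation*}
after which both sequences are trapped between $v^*$ and $v^*$, which is \eqref{line:mp:conv}. The convergence of optimal decisions will follow from a lower-semicontinuity estimate produced as a by-product of the lower bound.

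\emph{The lower bound} is the easy direction and is essentially the classical analysis of \eqref{prob:mp:saa}. Since $\Prb(\bxi\in\Xi)=1$, almost surely $\bxi^i\in\mathcal{U}_N^i$ for every $i$, so $\sup_{\bzeta\in\mathcal{U}_N^i}Q(\bx,\bzeta)\ge Q(\bx,\bxi^i)$ for all $\bx$; hence $\hat V_N^{\textnormal{SRO}}(\bx)\ge\hat V_N^{\textnormal{SAA}}(\bx)$ and $\hat v_N^{\textnormal{SRO}}\ge\hat v_N^{\textnormal{SAA}}$. Under \eqref{ass:v_finite}--\eqref{ass:saa} the random lower-semicontinuous convex functions $\hat V_N^{\textnormal{SAA}}$ epi-converge almost surely to $V^*$ (epi-consistency of convex sample-average programs), their minimizers are eventually confined to a fixed bounded set — here \eqref{ass:saa} is used, since a nonempty bounded solution set of the convex problem \eqref{prob:mp:first_stage} forces every level set of $V^*$ to be bounded — and therefore $\hat v_N^{\textnormal{SAA}}\to v^*$ a.s.; in particular $\liminf_N\hat v_N^{\textnormal{SRO}}\ge v^*$. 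We also record the accompanying epi-liminf estimate: for any sequence $\bx_N\to\bar\bx$, $\liminf_N\hat V_N^{\textnormal{SAA}}(\bx_N)\ge V^*(\bar\bx)$ a.s.

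\emph{The upper bound} is the crux, and it is where \eqref{ass:feas} together with the multi-policy structure replaces relatively complete recourse. Let $\bx^*$ be an optimal first-stage decision for \eqref{prob:mp:first_stage} (which exists by \eqref{ass:saa}), and let $\bx,\epsilon$ be as in \eqref{ass:feas}. For $\lambda\in(0,1)$ set $\bx_\lambda\triangleq(1-\lambda)\bx^*+\lambda\bx$ and take $N$ so large that $\epsilon_N$ is small relative to $\lambda\epsilon$ (say $\epsilon_N\le\lambda\epsilon$). On the almost-sure event that $\bxi^i\in\Xi$ for all $i$, pick an optimal second-stage decision $\by_0^i$ for $(\bx^*,\bxi^i)$ — finite because $\Exp[Q(\bx^*,\bxi)]<\infty$ — and the affine rule $\by^{\bxi^i}(\cdot)\in\mathcal{L}$ from \eqref{ass:feas} centered at $\bxi^i$, which is feasible for $(\bx,\cdot)$ on $B(\bxi^i,\epsilon)\cap\Xi\supseteq\mathcal{U}_N^i$. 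We feed $\bx_\lambda$, paired with (suitably corrected) convex-combination rules $\by^i(\cdot)=(1-\lambda)\by_0^i+\lambda\,\by^{\bxi^i}(\cdot)$, into \eqref{prob:mp:multi_policy}. A one-line computation gives, for every $\bzeta\in\mathcal{U}_N^i$,
\begin{equation*}
\bbt\bx_\lambda+\bbw\by^i(\bzeta)\;\ge\;(1-\lambda)\bh(\bxi^i)+\lambda\bh(\bzeta)\;=\;\bh(\bzeta)-(1-\lambda)\bbh(\bzeta-\bxi^i),
\end{equation*}
so feasibility fails only by the residual $(1-\lambda)\bbh(\bzeta-\bxi^i)$, which is of order $\epsilon_N$; this residual must be absorbed using the fact that the \emph{fixed} radius $\epsilon$ in \eqref{ass:feas} dominates the \emph{vanishing} radius $\epsilon_N$ of $\mathcal{U}_N^i$, so that the $\bx$-component of the recourse can be arranged to carry a slack of the same order $\epsilon_N$ — this is the content of a separate quantified-slack (``shrinkage'') lemma built on \eqref{ass:feas}. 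Granting feasibility, the objective value attained is at most
\begin{equation*}
(1-\lambda)\Bigl(\bc^\intercal\bx^*+\frac{1}{N}\sum_{i=1}^{N}Q(\bx^*,\bxi^i)\Bigr)
+\lambda\Bigl(\bc^\intercal\bx+\frac{1}{N}\sum_{i=1}^{N}\sup_{\bzeta\in\mathcal{U}_N^i}\bq^\intercal\by^{\bxi^i}(\bzeta)\Bigr)+o(1),
\end{equation*}
where the $o(1)$ comes from the correction terms. By the strong law of large numbers and \eqref{ass:v_finite}, the first bracket converges a.s.\ to $\bc^\intercal\bx^*+\Exp[Q(\bx^*,\bxi)]=V^*(\bx^*)=v^*$; the second bracket is a.s.\ eventually bounded in $N$, which requires the auxiliary rules $\by^{\bxi^i}(\cdot)$ to be chosen measurably and with recourse cost dominated by an affine function of $\|\bxi\|$ rather than merely pointwise as in \eqref{ass:feas}. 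Hence $\limsup_N\hat v_N^{\textnormal{MP}}\le(1-\lambda)v^*+\lambda\,\Gamma$ for a finite constant $\Gamma$ independent of $\lambda$ and every $\lambda\in(0,1)$; letting $\lambda\downarrow0$ gives $\limsup_N\hat v_N^{\textnormal{MP}}\le v^*$.

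\emph{Convergence of first-stage decisions.} If $\hat\bx_{N_k}^{\textnormal{MP}}\to\bar\bx$ along a subsequence, then by Theorem~\ref{thm:mp:hierarchy} and the epi-liminf estimate above, $V^*(\bar\bx)\le\liminf_k\hat V_{N_k}^{\textnormal{SAA}}(\hat\bx_{N_k}^{\textnormal{MP}})\le\liminf_k\hat V_{N_k}^{\textnormal{MP}}(\hat\bx_{N_k}^{\textnormal{MP}})=\liminf_k\hat v_{N_k}^{\textnormal{MP}}=v^*$, so $\bar\bx$ is optimal for \eqref{prob:mp:first_stage}; the argument for $\{\hat\bx_N^{\textnormal{SRO}}\}$ is identical. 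The main obstacle throughout is the upper bound — specifically, turning the merely \emph{pointwise}, almost-sure recourse feasibility that an optimal $\bx^*$ carries into genuine \emph{decision-rule} feasibility over the whole uncertainty sets $\mathcal{U}_N^i$: this forces the interpolation with \eqref{ass:feas}'s solution and, more delicately, forces one to (i) absorb the $O(\epsilon_N)$ right-hand-side discrepancy $\bh(\bzeta)-\bh(\bxi^i)$ via a quantified-slack strengthening of \eqref{ass:feas}, and (ii) control the aggregate cost of the auxiliary policies uniformly in $N$; everything else reduces to the classical epi-consistency theory for \eqref{prob:mp:saa}.
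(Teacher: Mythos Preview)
Your high-level architecture matches the paper exactly: the lower bound via $\hat v_N^{\textnormal{SRO}}\ge\hat v_N^{\textnormal{SAA}}$ and epi-consistency of SAA, the upper bound by interpolating $\bx_\lambda=(1-\lambda)\bx^*+\lambda\bx$ between an optimal $\bx^*$ and the \eqref{ass:feas}-solution $\bx$, feeding local affine rules into \eqref{prob:mp:multi_policy}, and sending $\lambda\downarrow 0$. The convergence-of-decisions argument via the epi-liminf estimate is also the paper's.

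However, the two gaps you yourself flag at the end are exactly where the paper's heavy lifting lies, and your sketched resolutions do not quite work. For gap (i), the natural fix---evaluate the \eqref{ass:feas}-rule at the blown-up point $\bxi^i+\lambda^{-1}(\bzeta-\bxi^i)$ so that the convex combination lands at $\bh(\bzeta)$ exactly---fails because that blown-up point need not lie in $\Xi$ when $\bxi^i$ is near the boundary. The paper's shrinkage lemma (Lemma~\ref{lem:mp:feas}, via a polyhedral result in the appendix) handles this by showing that for every $\hat\bzeta\in\Xi^*$ there is a \emph{different} center $\bar\bzeta\in\Xi^*$ with $B(\hat\bzeta,\epsilon)\cap\Xi\subseteq\{\lambda\bzeta+(1-\lambda)\bar\bzeta:\bzeta\in B(\bar\bzeta,\bar\epsilon)\cap\Xi\}$; one then uses feasibility of $\bx^*$ at $\bar\bzeta$ rather than at $\bxi^i$. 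This recentering is a genuine polyhedral-geometry argument (induction over faces of $\Xi$), not a one-liner.

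For gap (ii), ``choose the rules measurably with cost dominated by an affine function of $\|\bxi\|$'' is not something \eqref{ass:feas} gives you: as stated, the slopes $\bby^{\hat\bzeta}$ may be unbounded as $\hat\bzeta$ varies over $\Xi$, so neither $\frac1N\sum_i\sup_{\mathcal U_N^i}\bq^\intercal\by^{\bxi^i}(\cdot)$ nor the $o(1)$ correction terms are controllable. The paper's key device is Lemma~\ref{lem:mp:feas_then_C_feas}: \eqref{ass:feas} is equivalent to the existence of a \emph{finite} set $\mathcal C\subseteq\R^{r\times d}$ of recourse matrices that works uniformly over $\Xi$. With $\mathcal C$ finite, the cost bound (Lemma~\ref{lem:mp:bound}) $Q^{\mathcal C}_{\epsilon}(\bx_\lambda,\bzeta)\le Q(\bx^*,\bzeta)+\epsilon\,\eta^{\mathcal C}+L\|\bx_\lambda-\bx^*\|_*$ has $\eta^{\mathcal C}<\infty$ automatically, and the upper bound follows by SLLN without splitting into two brackets. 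Proving finiteness of $\mathcal C$ again uses an induction over the faces of $\Xi$ and is the deepest step in the paper. Your outline is correct in spirit, but without these two lemmas it is not yet a proof.
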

 Before presenting the proof of the above theorem in the subsequent sections, let us first discuss and interpret its necessary conditions. 

Condition [\ref{ass:linear_epsilon}] says that we have chosen to use linear decision rules in the multi-policy approximation, and will choose the radius of the uncertainty sets to decrease to zero as more data is obtained. Both of these conditions are chosen by the practitioner. We remark that this condition does not preclude the possibility that the sequence of $\{\epsilon_N \}_{N \in \N}$ is chosen after observing the historical data. 

Conditions [\ref{ass:v_finite}-\ref{ass:saa}] are probabilistic assumptions on the underlying two-stage stochastic linear optimization problem. Since the probability distribution of the stochastic problem is presumed to be unknown, it is generally not possible to verify these conditions in practice. On the other hand, the probabilistic conditions [\ref{ass:v_finite}-\ref{ass:saa}] are standard and consistent with the existing literature on Problem~\eqref{prob:mp:saa} (see Section~\ref{sec:mp:proof_thm}). We note that any problem that does not have first-stage decisions ($n=0$) can be easily modified to satisfy condition~[\ref{ass:saa}] by introducing dummy variables and constraints (\eg $x_1 \in \R$ and $0 \le x_1 \le 1$). 

Condition [\ref{ass:feas}] essentially stipulates that there exists a first-stage decision which is feasible for the multi-policy approximation with linear decision rules on any dataset. At first glance this condition might appear to be limiting; nonetheless, despite our efforts, we have been unable to find an instance of two-stage stochastic linear optimization which satisfies [\ref{ass:v_finite}-\ref{ass:saa}] and $\{\bx \in \R^n: Q(\bx,\bzeta) < \infty \; \forall \bzeta \in \Xi\} \neq \emptyset$ but  does not satisfy [\ref{ass:feas}].
In Section~\ref{sec:mp:asymptotic:L_Feasibility}, we present necessary and sufficient conditions for [\ref{ass:feas}] and show an example of how those conditions can be applied.

Theorem~\ref{thm:mp:conv} may be viewed as attractive due to its generality. In particular, 
Theorem~\ref{thm:mp:conv} does not require boundedness of the set of \emph{feasible} first-stage decisions nor boundedness of the polyhedron $\Xi$,  and holds for problems without relative complete recourse. As a result, we believe that the theorem is sufficiently general to encompass the vast array of two-stage stochastic linear optimization problems that are encountered in practice. 

The reminder of this section is organized as follows. 
In Section~\ref{sec:mp:asymptotic:L_Feasibility}, we discuss condition [\ref{ass:feas}] in greater detail, and provide verifiable sufficient and necessary conditions for this condition to hold. In Section \ref{sec:mp:lemmas}, we provide and prove two intermediary lemmas. In Section~\ref{sec:mp:proof_thm}, we combine the results from the previous sections to prove Theorem~\ref{thm:mp:conv}. 


\subsection{Sufficient (and necessary) conditions for [\ref{ass:feas}]} \label{sec:mp:asymptotic:L_Feasibility}
Out of the four conditions used in Theorem~\ref{thm:mp:conv} to establish asymptotic optimality guarantees for the multi-policy approximation, [\ref{ass:feas}] is perhaps the most opaque. In this section, we present {sufficient} (and in some cases {necessary}) conditions for [\ref{ass:feas}], and show how they can be used to verify  [\ref{ass:feas}] in an example. 
In the process of establishing these conditions, we present Definition~\ref{def:Q_C} and Lemma~\ref{lem:mp:feas_then_C_feas},  which will also be used in Sections~\ref{sec:mp:lemmas} and \ref{sec:mp:proof_thm}.

Our discussion begins with the following simple but insightful sufficient condition for   [\ref{ass:feas}]: 

\begin{proposition} \label{prop:mp:ldr_first}
	Condition~[\ref{ass:feas}] holds if there exists a first-stage decision $\bx \in \R^n$ and a \emph{single} linear decision rule $\by(\cdot) \in \mathcal{L}$ which satisfy $\bbt \bx + \bbw \by(\bzeta) \ge \bh(\bzeta)$ for all  $\bzeta \in \Xi.$ 
\end{proposition}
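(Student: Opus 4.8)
The plan is to observe that the hypothesis of the proposition is nothing but a \emph{global} (over all of $\Xi$) version of the \emph{local} feasibility requirement in [\ref{ass:feas}], so [\ref{ass:feas}] will follow by a direct specialization. Concretely, I would start from a first-stage decision $\bx \in \R^n$ and a linear decision rule $\by(\cdot) \in \mathcal{L}$ satisfying $\bbt \bx + \bbw \by(\bzeta) \ge \bh(\bzeta)$ for all $\bzeta \in \Xi$, and then verify that [\ref{ass:feas}] holds with this same $\bx$, with an arbitrary fixed radius (say $\epsilon = 1$), and with the assignment $\by^{\hat{\bzeta}}(\cdot) \triangleq \by(\cdot) \in \mathcal{L}$ for \emph{every} center $\hat{\bzeta} \in \Xi$. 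Since $B(\hat{\bzeta},\epsilon) \cap \Xi \subseteq \Xi$ for each $\hat{\bzeta} \in \Xi$, the global inequality immediately restricts to $\bbt \bx + \bbw \by^{\hat{\bzeta}}(\bzeta) \ge \bh(\bzeta)$ for all $\bzeta \in B(\hat{\bzeta},\epsilon) \cap \Xi$, which is exactly the statement of [\ref{ass:feas}].

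I do not anticipate any real obstacle: the argument is a one-line consequence of the definition of [\ref{ass:feas}], the only (trivial) point being that global feasibility of a \emph{single} affine recourse policy automatically delivers local feasibility around every point, with the policy chosen independently of the center $\hat{\bzeta}$ and the radius chosen freely. If anything merits a sentence of care, it is spelling out that the quantifier structure of [\ref{ass:feas}] — ``there exist $\bx,\epsilon$ such that for all $\hat{\bzeta}$ there exists $\by^{\hat{\bzeta}}(\cdot)$'' — is satisfied by taking the inner choice constant in $\hat{\bzeta}$.

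It is worth noting why the proposition is nonetheless useful to record: its hypothesis — existence of a globally $\Xi$-feasible affine recourse policy — is directly checkable (e.g.\ via linear optimization duality applied to the robust feasibility system over the polyhedron $\Xi$), whereas [\ref{ass:feas}] as written quantifies over all centers $\hat{\bzeta} \in \Xi$ and is thus less transparent; the subsequent development then addresses the converse direction, identifying when this sufficient condition is also necessary.
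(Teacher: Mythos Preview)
Your proposal is correct and matches the paper's own approach exactly: the paper's proof is the single sentence ``This result follows immediately from the definition of [\ref{ass:feas}],'' which is precisely the observation you spell out (take the same $\bx$, any $\epsilon>0$, and set $\by^{\hat{\bzeta}}(\cdot)\equiv\by(\cdot)$ for all $\hat{\bzeta}\in\Xi$).
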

\begin{proof}{Proof. }
	This result follows immediately from the definition of [\ref{ass:feas}]. 
	\Halmos \end{proof}
Intuitively speaking, the above proposition says that condition~[\ref{ass:feas}] holds if the single-policy approximation with linear decision rules (see Section~\ref{sec:mp:single_policy}) is feasible for all realizations in the polyhedron $\Xi \subseteq \R^d$. The attractiveness of this sufficient condition is that it can be assessed in polynomial time by solving a robust optimization problem.\footnote{The condition provided in Proposition~\ref{prop:mp:ldr_first} can be verified by solving a robust optimization given by
\begin{equation*}
\begin{aligned}
&\underset{\bx \in \R^n, \; \by^0 \in \R^r, \; \bby \in \R^{r \times d}}{\textnormal{minimize}}&& 0 \\
&\textnormal{subject to}&& \bbt \bx + \bbw (\by^0 + \bby \bzeta) \ge \bh^0 + \bbh \bzeta, \quad \forall \bzeta \in \Xi. 
\end{aligned}
\end{equation*}
Since $\Xi \subseteq \R^d$ is a polyhedron, it follows from standard reformulation techniques that the robust optimization problem can be solved in   polynomial time.} 

	Nonetheless, the condition presented in Proposition~\ref{prop:mp:ldr_first} is {not} a \emph{necessary} condition for [\ref{ass:feas}]. In other words, it is possible to construct problem instances which satisfy [\ref{ass:feas}] but do not satisfy the conditions in Proposition~\ref{prop:mp:ldr_first}.  To demonstrate this, we first provide an example in which the condition from Proposition~\ref{prop:mp:ldr_first} does {not} hold. Then, at the end of the present Section~\ref{sec:mp:asymptotic:L_Feasibility}, we will revisit this example and show that it indeed satisfies [\ref{ass:feas}].   
\begin{example}\label{ex:mp:5}
	Consider the following problem wherein we seek a decision rule $y(\bzeta)$ which satisfies 
	\begin{align}
	\begin{aligned}\label{eq:ex5_1}
	y(\bzeta) &\geq \zeta_1-\zeta_2 \\
	y(\bzeta) &\geq \zeta_2-\zeta_1 \\
	y(\bzeta) &\leq \zeta_1+\zeta_2+2 \\
	y(\bzeta) &\leq -\zeta_2-\zeta_1+2,\\
	&\quad \forall \bzeta \in \Xi =\{\bzeta\in\R^2: \norm{\bzeta}_\infty\leq 1\}
	\end{aligned}
	\end{align}
	We readily observe that $y(\bzeta)=|\zeta_1-\zeta_2|$ is a feasible decision rule to \eqref{eq:ex5_1}. However, \citet[Proposition 3]{bertsimas2018adaptive} show there does not exist a linear decision rule which satisfies \eqref{eq:ex5_1}. Therefore, the sufficient conditions of Proposition~\ref{prop:mp:ldr_first} are not satisfied in this example.   \halmos
\end{example}	

We next present a stronger result (Lemma~\ref{lem:mp:feas_then_C_feas}) which provides {necessary} and {sufficient} conditions for [\ref{ass:feas}]. 
The statement of the lemma 
requires the following notation.
\begin{definition} \label{def:Q_C}
For any first-stage decision $\bx \in \R^n$, realization $\hat{\bzeta} \in \Xi$, radius $\epsilon > 0$, and set of recourse matrices $\mathcal{C} \subseteq \R^{r \times d}$, 
\begin{align*}
\begin{aligned}
Q^{\mathcal{C}}_{\epsilon}(\bx,\hat{\bzeta}) \triangleq \quad & \underset{\by^0 \in \R^r, \; \bby \in \mathcal{C}}{\text{minimize}}&& \max_{\bzeta \in B({\hat{\bzeta} },\epsilon) \cap \Xi} \bq^\intercal (\by^0 + \bby \bzeta)  \\
& \text{subject to}&&\bbt \bx + \bbw(\by^0 + \bby \bzeta) \geq \bh(\bzeta) ,\;\forall \bzeta \in  B({\hat{\bzeta} },\epsilon) \cap \Xi.
\end{aligned}
\end{align*}
\end{definition}
The quantity $Q^{\mathcal{C}}_{\epsilon}(\bx,\hat{\bzeta})$ is understood as the second-stage cost of the multi-policy approximation when $\Pi$ is a restricted space of linear decision rules, namely, the space of linear decision rules 
with recourse matrices which are elements of $\mathcal{C} \subseteq \R^{r \times d}$. 
For the sake of developing intuition, we note that the objective function in the multi-policy approximation may be stated equivalently as $\hat{V}^{\textnormal{MP}}_N(\bx) = \bc^\intercal \bx + \frac{1}{N} \sum_{i=1}^N Q^{\R^{r \times d}}_{\epsilon_N}(\bx,{\bxi}{}^i)$ when $\Pi = \mathcal{L}$.

In view of the above definition, we now present   two necessary and sufficient conditions for [\ref{ass:feas}]. 

\begin{lemma} \label{lem:mp:feas_then_C_feas}
	Conditions~[\ref{ass:feas}], [\ref{lem:mp:feas:II}], and [\ref{lem:mp:feas:III}] are equivalent. 
	\begin{enumerate}[label={\bf [A\arabic*]},ref=A\arabic*]
	\setcounter{enumi}{4}
		\item There exists a first-stage decision $\bx\in \Real^n $ and a radius $\epsilon>0$ for which the following hold:\label{lem:mp:feas:II}
	\begin{enumerate}[(a)]
			\item $Q(\bx,\bzeta)<\infty$ for all $\bzeta\in\Xi$.
			\item For each  minimal face $F$ of $\Xi$, there exists a recourse matrix $\bby^F\in\Real^{r\times d}$ such that $Q{}^{\left\{\bby^F\right\}}_{{\epsilon}}(\bx,\bzeta)<\infty$ for all $\bzeta\in F$.
		\end{enumerate}
		\item There exists a first-stage decision $\bx\in \Real^n $, a radius $\epsilon>0$, and a finite set of recourse matrices $\mathcal{C}\subseteq\Real^{r\times d}$  such that  $Q_{\epsilon}^\mathcal{C}(\bx,\bzeta)<\infty$ for all $\bzeta\in \Xi$.\label{lem:mp:feas:III}
	\end{enumerate}
\end{lemma}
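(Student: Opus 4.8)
The plan is to establish the cycle of implications [\ref{lem:mp:feas:III}] $\Rightarrow$ [\ref{ass:feas}] $\Rightarrow$ [\ref{lem:mp:feas:II}] $\Rightarrow$ [\ref{lem:mp:feas:III}]. It will be convenient to fix the first-stage decision $\bx$ and write $\bv_0 := \bh^0 - \bbt\bx \in \R^m$ and $C := \bbw\R^r - \R^m_+$; being a Minkowski sum of two polyhedral cones, $C$ is a closed convex cone, so $\rec(C) = C$, and it is downward closed, meaning $\bv \in C$ and $\bv' \le \bv$ imply $\bv' \in C$. Two elementary observations drive everything. First, for any bounded polyhedron $P$ containing $\hat\bzeta$ and any fixed recourse matrix $\bby$, there is an intercept $\by^0$ with $\bbt\bx + \bbw(\by^0 + \bby\bzeta) \ge \bh(\bzeta)$ for all $\bzeta \in P$ if and only if $\bv_0 + \max_{\bzeta \in P}(\bbh - \bbw\bby)\bzeta \in C$, where the maximum is taken componentwise; in particular $Q(\bx,\hat\bzeta) < \infty$ if and only if $\bv_0 + \bbh\hat\bzeta \in C$. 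Second, for $\epsilon$ sufficiently small $B(\hat\bzeta,\epsilon)\cap\Xi = \hat\bzeta + (B(\bzero,\epsilon)\cap T_\Xi(\hat\bzeta))$, where $T_\Xi(\hat\bzeta)$ is the tangent cone of $\Xi$ at $\hat\bzeta$, which is constant as $\hat\bzeta$ ranges over the relative interior of a fixed face of $\Xi$; since the objective of $Q^{\{\bby\}}_\epsilon(\bx,\cdot)$ over a nonempty bounded feasible set is automatically finite, combining the two observations gives that $Q^{\{\bby\}}_\epsilon(\bx,\hat\bzeta) < \infty$ if and only if $\bv_0 + (\bbh - \bbw\bby)\hat\bzeta + \epsilon\,\bsigma \in C$, where $\sigma_j := \max\{(\bbh - \bbw\bby)_j\bu : \bu \in T_\Xi(\hat\bzeta),\ \|\bu\|\le 1\} \ge 0$.

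The implication [\ref{lem:mp:feas:III}] $\Rightarrow$ [\ref{ass:feas}] is immediate: given $(\bx,\epsilon,\mathcal C)$ as in [\ref{lem:mp:feas:III}] and any $\hat\bzeta \in \Xi$, the finiteness of $Q^{\mathcal C}_\epsilon(\bx,\hat\bzeta)$ supplies some $\by^0$ and some $\bby \in \mathcal C$ for which the linear decision rule $\bzeta \mapsto \by^0 + \bby\bzeta$ witnesses [\ref{ass:feas}]. For [\ref{ass:feas}] $\Rightarrow$ [\ref{lem:mp:feas:II}], fix $(\bx,\epsilon)$ as in [\ref{ass:feas}]. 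Part (a) follows by evaluating, at $\bzeta = \hat\bzeta$, the decision rule guaranteed at each $\hat\bzeta \in \Xi$. For part (b), I would use two polyhedral facts about a minimal face $F$ of $\Xi$: (i) $F$ is an affine subspace $\bzeta^\circ + L$, where $L$ is the lineality space of $\Xi$, and there is a radius $\rho_F > 0$ with $B(\bzeta,\rho)\cap\Xi = \bzeta + (B(\bzero,\rho)\cap T_F)$ for every $\bzeta \in F$ and $\rho \le \rho_F$, where $T_F := T_\Xi(\bzeta)$ is the tangent cone common to all points of $F$ (here one uses that $\bbg_i\bzeta$ is constant along $F$ for every row $i$ of $\bbg$, since $L \subseteq \ker\bbg$, so $F$ sits at a fixed positive distance from each facet it does not meet); and (ii) $\bbh\bell \in C$ for every $\bell \in L$, since $\bv_0 + \bbh(\bzeta^\circ + t\bell) \in C$ for all $t \in \R$ by part (a), whence $\bbh\bell \in \rec(C) = C$. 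Taking the linear rule $\bzeta \mapsto \by^{0,\circ} + \bby^F\bzeta$ supplied by [\ref{ass:feas}] at $\bzeta^\circ$ and keeping its slope $\bby^F$, for an arbitrary $\bzeta^\circ + \bell \in F$ one picks $\bdelta$ with $\bbw(\bdelta + \bby^F\bell) \ge \bbh\bell$ (possible by (ii)); using (i) and the feasibility of $(\by^{0,\circ},\bby^F)$ on $\bzeta^\circ + (B(\bzero,\rho)\cap T_F)$, the rule with intercept $\by^{0,\circ} + \bdelta$ is feasible on $B(\bzeta^\circ + \bell,\rho)\cap\Xi$, so $Q^{\{\bby^F\}}_{\rho}(\bx,\bzeta^\circ + \bell) < \infty$. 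Replacing $\epsilon$ by $\min\{\epsilon,\ \min_F \rho_F\}$ over the finitely many minimal faces then gives [\ref{lem:mp:feas:II}].

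It remains to prove [\ref{lem:mp:feas:II}] $\Rightarrow$ [\ref{lem:mp:feas:III}], which is the heart of the lemma and the step I expect to be the main obstacle. Since $B(\hat\bzeta,\epsilon)\cap\Xi \subseteq \hat\bzeta + (B(\bzero,\epsilon)\cap T_\Xi(\hat\bzeta))$ always, and since $C$ is downward closed with $\bsigma \ge 0$ (so that decreasing $\epsilon$ only helps), the task reduces to proving: for each face $G$ of $\Xi$ there exist a single recourse matrix $\bby_G$ and a radius $\epsilon_G > 0$ with $Q^{\{\bby_G\}}_{\epsilon_G}(\bx,\hat\bzeta) < \infty$ for all $\hat\bzeta \in G$; then $\mathcal C := \{\bby_G : G \text{ a face of }\Xi\}$ (a finite set) and $\epsilon := \min_G \epsilon_G$ deliver [\ref{lem:mp:feas:III}]. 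I would prove this reduced statement by induction on $\dim G$. The base case, $G$ a minimal face, is exactly part (b) of [\ref{lem:mp:feas:II}]. In the inductive step, the relative boundary of $G$ is covered by the matrices and radii already obtained for the proper faces of $G$; the recession directions of $\Xi$ are absorbed into $C$ exactly as in fact (ii) above, now using $\bbh\br \in \rec(C) = C$ for every $\br \in \rec(\Xi)$; and one is left to handle the interior directions at points of $\relint(G)$, writing each such point as a convex combination of points lying on the minimal faces of $\Xi$ contained in $G$ plus a recession direction. The delicate point — and the obstacle — is that blending the affine rules attached to those minimal faces produces a slope depending continuously on the convex weights, so one must argue that finitely many distinct slopes nonetheless suffice to cover $\relint(G)$; I expect this to follow from the parametric-linear-programming (basis) decomposition of $C = \bbw\R^r - \R^m_+$ into finitely many polyhedral cells, each carrying a constant recourse matrix, pulled back through the affine map $\hat\bzeta \mapsto \bv_0 + \bbh\hat\bzeta$, with condition (b) of [\ref{lem:mp:feas:II}] supplying precisely the control needed at the degenerate points where this pulled-back subdivision is tangent to a cell boundary.
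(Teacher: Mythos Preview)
Your treatment of [\ref{lem:mp:feas:III}] $\Rightarrow$ [\ref{ass:feas}] and [\ref{ass:feas}] $\Rightarrow$ [\ref{lem:mp:feas:II}] is correct and matches the paper's argument up to phrasing: you package the Farkas step as ``$\bbh\bell \in C$ for $\bell$ in the lineality space,'' while the paper invokes Farkas' lemma directly, but the content is the same.

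The gap is in [\ref{lem:mp:feas:II}] $\Rightarrow$ [\ref{lem:mp:feas:III}]. Your reduced statement --- that for each face $G$ of $\Xi$ a \emph{single} matrix $\bby_G$ satisfies $Q^{\{\bby_G\}}_{\epsilon_G}(\bx,\hat\bzeta) < \infty$ for all $\hat\bzeta \in G$ --- is false in general, already for the top face $G = \Xi$. Take Example~\ref{ex:mp:5}: at the extreme point $(1,-1)$ three constraints are simultaneously tight, and a short tangent-cone calculation forces the slope to be exactly $(1,-1)$; by symmetry the only slope working at $(-1,1)$ is $(-1,1)$. Both points lie in $\Xi$, so no single $\bby_\Xi$ works. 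The same obstruction persists at interior points arbitrarily close to those vertices, so restricting to $\relint(G)$ does not save the statement either. Your closing speculation (blend the minimal-face slopes via convex weights, then hope parametric-LP cells reduce this to finitely many) does not resolve this, because the difficulty is precisely that different regions of $\relint(G)$ genuinely require different slopes.

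The paper's proof avoids this by taking $\mathcal C$ to be only the matrices $\bby^F$ attached to the \emph{minimal} faces, and then proving by induction on $\dim G$ that every $\hat\bzeta \in \relint(G)$ admits \emph{some} $\bby \in \mathcal C$ (not a fixed $\bby_G$) with a uniform radius $\epsilon_k$. The induction step splits into two cases. If $\hat\bzeta$ is close to a facet of $G$ (distance at most a fixed multiple of $\epsilon_k$), Hoffman's bound lets you project to that facet and invoke the inductive hypothesis there with the larger radius $\epsilon_{k-1}$. If $\hat\bzeta$ is far from every facet, you slide along a line in $\relint(G)$ until you first hit the ``close'' regime at some $\breve\bzeta$, use Case~1 there to get a rule $(\by^0,\bby)$ feasible on a ball of radius $2\epsilon_k$ around $\breve\bzeta$, and then average that rule with a static second-stage decision at the reflected point $\ddot\bzeta = 2\hat\bzeta - \breve\bzeta$ (available by [\ref{lem:mp:feas:II}](a)); a shrinkage identity for $B(\cdot,\epsilon)\cap\Xi$ shows the averaged rule, with the \emph{same} slope $\bby \in \mathcal C$, is feasible on $B(\hat\bzeta,\epsilon_k)\cap\Xi$. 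The crucial point is that the matrix used at $\hat\bzeta$ is inherited from a lower-dimensional face and can vary with $\hat\bzeta$; only the radius is uniform.
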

\begin{proof}{Proof.}
 See Appendix~\ref{appx:C_feas}. \Halmos
\end{proof}

Let us interpret the conditions developed in Lemma~\ref{lem:mp:feas_then_C_feas}. 
Regarding [\ref{lem:mp:feas:II}], we recall that if a polyhedron $\Xi \subseteq \R^d$ has at least one extreme point, then the extreme points of the polyhedron are its minimal faces \citep[Section 3]{Conforti2014}. Therefore, if there exists a first-stage decision $\bx \in \R^n$ which satisfies $Q(\bx,\bzeta) < \infty$ for all $\bzeta \in \Xi$, and if the polyhedron $\Xi \subseteq \R^d$ has at least one extreme point, then [\ref{lem:mp:feas:II}] can be verified by checking if there are linear decision rules which are feasible in a ball around each extreme point of $\Xi$. 
We illustrate this procedure for verifying condition~[\ref{lem:mp:feas:II}] (and ultimately [\ref{ass:feas}]) by  returning to Example~\ref{ex:mp:5}.
\begin{repeatexample}[Example~\ref{ex:mp:5}, continued.]
	We recall that the decision rule $y(\bzeta)=|\zeta_1-\zeta_2|$ satisfies  the constraints \eqref{eq:ex5_1}, and there is no first-stage decision in this problem. We therefore conclude that condition [\ref{lem:mp:feas:II}(a)]  is satisfied. Next,  we observe that the extreme points of $\Xi$ are given by $$\ext(\Xi) = \{(1,1),(-1,1),(1,-1),(-1,-1) \}.$$
	Consider the radius $\epsilon= {1}/{\sqrt{2}}$ and observe that, for each extreme point $\hat{\bzeta}$ of $\Xi$, the set  $
	B(\hat{\bzeta},\epsilon) \cap \Xi$ 
	is contained within the simplex formed by the convex hull of $\hat{\bzeta}$ and its two adjacent extreme points. In other words, for each extreme point $\hat{\bzeta}$, the set $B(\hat{\bzeta},\epsilon) \cap \Xi$ is contained in a simplex defined as
	\begin{align*}
	P_{\bzeta} &\triangleq \conv \left( \left \{ \begin{pmatrix} \hat{\zeta}_1 \\ \hat{\zeta}_2 \end{pmatrix},\begin{pmatrix} -\hat{\zeta}_1 \\ \hat{\zeta}_2 \end{pmatrix},\begin{pmatrix} \hat{\zeta}_1 \\ -\hat{\zeta}_2 \end{pmatrix}
	\right \} \right).
	\end{align*}
	We also observe that the extreme points are affinely independent.  
	Therefore, it follows from identical reasoning as \citet[Theorem 1]{Bertsimas2012poweraffine} that there exists a {linear} decision rule which satisfies the constraints \eqref{eq:ex5_1} for all  $\bzeta \in P_{\hat{\bzeta}}$. Since the set $B(\hat{\bzeta},\epsilon) \cap \Xi$ is contained in the simplex $P_{\hat{\bzeta}}$ for each extreme point, we have shown that condition~[\ref{lem:mp:feas:II}(b)] holds.  We conclude that condition~[\ref{lem:mp:feas:II}] holds, and since Lemma~\ref{lem:mp:feas_then_C_feas} implies that conditions~[\ref{lem:mp:feas:II}] and [\ref{ass:feas}] are equivalent, we conclude that condition [\ref{ass:feas}] holds as well. 
	\halmos \end{repeatexample}	

In summary, we have offered two potential procedures for verifying condition~[\ref{ass:feas}]. The first procedure, and by far the simplest, is to check the sufficient condition in Proposition~\ref{prop:mp:ldr_first} by solving a robust optimization problem. For problems in which this first procedure does not produce an affirmative conclusion, a second (albeit more complex) procedure can potentially be undertaken by checking condition~[\ref{lem:mp:feas:II}]. Using the second procedure, we have provided evidence    (Example~\ref{ex:mp:5}) that condition~[\ref{ass:feas}] can hold even when a two-stage problem does not have a linear decision rule which is feasible for all realizations in the polyhedron $\Xi$. 
This result is viewed as positive, as it indicates that [\ref{ass:feas}] may be a mild condition.  
Finally, as discussed at the end of the previous section, we have not found a two-stage problem which satisfies [\ref{ass:v_finite}-\ref{ass:saa}] and $\{\bx \in \R^n: Q(\bx,\bzeta) < \infty \; \forall \bzeta \in \Xi\} \neq \emptyset$ but  does not satisfy condition [\ref{ass:feas}]. 
The question of whether these conditions are equivalent is open for future research.

While it was not utilized in this section,  [\ref{lem:mp:feas:III}] from Lemma~\ref{lem:mp:feas_then_C_feas} offers another necessary and sufficient condition for [\ref{ass:feas}]. Condition~[\ref{lem:mp:feas:III}] has a similar interpretation as [\ref{ass:feas}], except that the former imposes a feasibility requirement which must hold for linear decision rules with a finite number of recourse matrices. This necessary and sufficient condition for [\ref{ass:feas}] will play an important role in the proof found in Section~\ref{sec:mp:proof_thm}. 

\subsection{Intermediary lemmas} \label{sec:mp:lemmas}
Our proof of Theorem~\ref{thm:mp:conv}, found in Section~\ref{sec:mp:proof_thm}, is the culmination of three intermediary results (Lemmas~\ref{lem:mp:feas_then_C_feas}, \ref{lem:mp:feas}, and \ref{lem:mp:bound}). The first of those results, Lemma~\ref{lem:mp:feas_then_C_feas}, was presented in the previous section. In the present section, we state and prove  Lemmas~\ref{lem:mp:feas} and \ref{lem:mp:bound}. 

To simplify our exposition, we assume throughout Section~\ref{sec:mp:lemmas} that conditions~[\ref{ass:v_finite}-\ref{ass:saa}] hold.  Let $\bx^* \in \R^n$ denote an optimal first-stage decision for Problem~\eqref{prob:mp:first_stage}, the existence of which follows directly from condition [\ref{ass:saa}]. Moreover, let the set of realizations $\bzeta \in \Xi$ which are feasible for the optimal first-stage decision be denoted by  
\begin{align*}
{\Xi}^* \triangleq \left \{ \bzeta \in \Xi: \; \text{there exists } \by \in \R^r \text{ such that } \bbt \bx^* + \bbw \by \ge \bh(\bzeta) \right \}.
\end{align*}

We now present the first result of this section (Lemma~\ref{lem:mp:feas}). Recall the notation of $Q^{\mathcal{C}}_\epsilon(\bx,\hat{\bzeta})$ from the previous section (Definition~\ref{def:Q_C}). 
The purpose of Lemma~\ref{lem:mp:feas} is to show, for a collection of recourse matrices $\mathcal{C} \subseteq \R^{r \times d}$, that there exists a first-stage decision $\bx' \in \R^n$ arbitrarily close to $\bx^*$ which satisfies $Q^{\mathcal{C}}_{{\epsilon}}(\bx',\bzeta) < \infty$ for all $\bzeta \in \Xi^*$. 
\begin{lemma} \label{lem:mp:feas}
Let $\bx\in \Real^n$, $\mathcal{C}\subseteq\Real^{r\times d}$, and $\bar{\epsilon}>0$ satisfy $Q^{\mathcal{C}}_{\bar{\epsilon}}(\bx,\bzeta) < \infty$ for all $\bzeta\in \Xi^*$. Then,
for every $\lambda\in(0,1)$ there exists an $\epsilon>0$ such that $Q^{\mathcal{C}}_{\epsilon}(\lambda \bx^*+(1-\lambda)\bx,\bzeta) < \infty$ for all $\bzeta\in \Xi^*$.
\end{lemma}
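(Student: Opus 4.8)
\emph{Proof plan.} Fix $\lambda\in(0,1)$ and set $\bx'\triangleq\lambda\bx^*+(1-\lambda)\bx$. The plan is to exhibit, for every center $\hat{\bzeta}\in\Xi^*$, a single linear decision rule with recourse matrix in $\mathcal{C}$ that is feasible for $\bx'$ on $B(\hat{\bzeta},\epsilon)\cap\Xi$ for a radius $\epsilon>0$ \emph{independent of} $\hat{\bzeta}$. For a fixed $\hat{\bzeta}$ there are two objects at hand: a feasible second-stage point $\by^*_{\hat{\bzeta}}$ of $\bx^*$ at $\hat{\bzeta}$ (which exists because $\hat{\bzeta}\in\Xi^*$), and a feasible linear decision rule $\bzeta\mapsto\by^0_{\hat{\bzeta}}+\bby_{\hat{\bzeta}}\bzeta$ of $\bx$ on $B(\hat{\bzeta},\bar{\epsilon})\cap\Xi$ with $\bby_{\hat{\bzeta}}\in\mathcal{C}$ (which exists because $Q^{\mathcal{C}}_{\bar{\epsilon}}(\bx,\hat{\bzeta})<\infty$). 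I would interpolate them using the \emph{same} recourse matrix on both pieces, so the interpolant stays inside the class of $\mathcal{C}$-rules:
\[
\tilde{\by}_{\hat{\bzeta}}(\bzeta)\;\triangleq\;\lambda\big(\by^*_{\hat{\bzeta}}+\bby_{\hat{\bzeta}}(\bzeta-\hat{\bzeta})\big)+(1-\lambda)\big(\by^0_{\hat{\bzeta}}+\bby_{\hat{\bzeta}}\bzeta\big),
\]
whose recourse matrix is $\bby_{\hat{\bzeta}}\in\mathcal{C}$. Since the objective of the inner problem defining $Q^{\mathcal{C}}_{\epsilon}(\bx',\hat{\bzeta})$ is the maximum of an affine function over the compact set $B(\hat{\bzeta},\epsilon)\cap\Xi$, it is automatically finite once feasibility holds, so it suffices to establish feasibility of $\tilde{\by}_{\hat{\bzeta}}$.

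The computational core is the identity
\[
\bbt\bx'+\bbw\tilde{\by}_{\hat{\bzeta}}(\bzeta)\;=\;\bh(\bzeta)+\br_{\hat{\bzeta}}(\bzeta)+\lambda\big(\bs_{\hat{\bzeta}}-\br_{\hat{\bzeta}}(\hat{\bzeta})\big),
\]
where $\bs_{\hat{\bzeta}}\triangleq\bbt\bx^*+\bbw\by^*_{\hat{\bzeta}}-\bh(\hat{\bzeta})\ge\bzero$ is the slack of $\bx^*$ and $\br_{\hat{\bzeta}}(\bzeta)\triangleq\bbt\bx+\bbw(\by^0_{\hat{\bzeta}}+\bby_{\hat{\bzeta}}\bzeta)-\bh(\bzeta)$ is the affine slack of the $\bx$-rule, which is $\ge\bzero$ on $B(\hat{\bzeta},\bar{\epsilon})\cap\Xi$; this follows by collecting the $\lambda$- and $(1-\lambda)$-terms and using the affine identities $\lambda\bh(\hat{\bzeta})+(1-\lambda)\bh(\bzeta)=\bh(\bzeta)-\lambda\bbh(\bzeta-\hat{\bzeta})$ and $(\bbw\bby_{\hat{\bzeta}}-\bbh)(\bzeta-\hat{\bzeta})=\br_{\hat{\bzeta}}(\bzeta)-\br_{\hat{\bzeta}}(\hat{\bzeta})$. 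Hence $\tilde{\by}_{\hat{\bzeta}}$ is feasible for $\bx'$ at $\bzeta$ if and only if $\br_{\hat{\bzeta}}(\bzeta)\ge\lambda(\br_{\hat{\bzeta}}(\hat{\bzeta})-\bs_{\hat{\bzeta}})$. Reading this coordinate by coordinate: for a row $j$ with $[\br_{\hat{\bzeta}}(\hat{\bzeta})]_j\le[\bs_{\hat{\bzeta}}]_j$ the right-hand side is nonpositive while the left-hand side is nonnegative on $B(\hat{\bzeta},\bar{\epsilon})\cap\Xi$, so the constraint is vacuous; for the remaining rows $0\le[\bs_{\hat{\bzeta}}]_j<[\br_{\hat{\bzeta}}(\hat{\bzeta})]_j$, so $(1-\lambda)[\br_{\hat{\bzeta}}(\hat{\bzeta})]_j+\lambda[\bs_{\hat{\bzeta}}]_j>0$, and continuity of the affine map $\bzeta\mapsto[\br_{\hat{\bzeta}}(\bzeta)]_j$ at $\hat{\bzeta}$ gives a radius $\epsilon_{\hat{\bzeta}}>0$ on which the inequality holds. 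This settles the pointwise statement.

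The remaining — and in my view main — difficulty is to make the radius \emph{uniform} over the (possibly unbounded) polyhedron $\Xi^*$. I would handle this in three steps. First, a center-perturbation remark: if a $\mathcal{C}$-rule is feasible for $\bx'$ on $B(\hat{\bzeta},\rho)\cap\Xi$, the same rule is feasible on $B(\hat{\bzeta}',\rho-\|\hat{\bzeta}-\hat{\bzeta}'\|)\cap\Xi$, since $B(\hat{\bzeta}',\rho-\|\hat{\bzeta}-\hat{\bzeta}'\|)\subseteq B(\hat{\bzeta},\rho)$; hence the supremal admissible radius is $1$-Lipschitz in the center, so the pointwise positivity above upgrades, by compactness, to a uniform positive radius on any \emph{bounded} subset of $\Xi^*$ (in particular across its boundary). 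Second, I would reduce to a \emph{finite} family of recourse matrices — Lemma~\ref{lem:mp:feas_then_C_feas} guarantees the hypothesis can be arranged with $\mathcal{C}$ finite — so that the matrices $\bbw\bby_{\hat{\bzeta}}-\bbh$ range over a bounded set, bounding the Lipschitz constants of $\bzeta\mapsto[\br_{\hat{\bzeta}}(\bzeta)]_j$ uniformly and controlling $\epsilon_{\hat{\bzeta}}$ in the nonvacuous rows by $(1-\lambda)[\br_{\hat{\bzeta}}(\hat{\bzeta})]_j$ divided by a uniform constant. Third, to reach the recession directions, I would write $\Xi^*=\conv(\ext(\Xi^*))+\rec(\Xi^*)$ (reducing to the pointed case by quotienting out the lineality space) and use that along any $\bd\in\rec(\Xi^*)\subseteq\rec(\Xi)$ the active inequalities of $\Xi$ at $\hat{\bzeta}+t\bd$ stabilize for large $t$, uniformly over $\hat{\bzeta}$ in the compact part, so that the relevant local geometry of $B(\cdot,\epsilon)\cap\Xi$ — and hence the admissible radius — is eventually constant along each such ray. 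Combining the three steps produces a single $\epsilon>0$ valid on all of $\Xi^*$. I expect the bookkeeping in this last step, reconciling boundary behavior and behavior at infinity of $\Xi^*$, to be the technically delicate part; the interpolation identity and the coordinatewise dichotomy are routine once written down.
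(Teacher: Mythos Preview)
Your interpolation identity and the coordinatewise dichotomy are correct, and together with the $1$-Lipschitz observation on the supremal admissible radius they do give the result whenever $\Xi^*$ is bounded (a continuous, pointwise-positive function on a compact set is uniformly positive). The genuine gap is step~3, for unbounded $\Xi^*$. It is true that the active constraints of $\Xi$ at $\hat{\bzeta}+t\bd$ stabilize along a recession ray $\bd\in\rec(\Xi^*)$, so the \emph{shape} of $B(\hat{\bzeta}+t\bd,\epsilon)\cap\Xi$ is eventually independent of $t$; but this does \emph{not} imply the admissible radius for $\bx'$ is eventually constant along the ray. Feasibility for $\bx'$ depends on $\bh(\cdot)$, which drifts linearly along the ray, and transporting a feasible rule along the ray would require something like $\bbw\by\ge s(\bbh-\bbw\bby_{\hat{\bzeta}})\bd$ to be solvable in $\by$ for arbitrary $s$, which is not automatic. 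Your appeal to Lemma~\ref{lem:mp:feas_then_C_feas} to force $\mathcal{C}$ finite is also off: that lemma concerns feasibility over all of $\Xi$, not over $\Xi^*$, and in any case the argument should not---and in the paper does not---rely on finiteness of $\mathcal{C}$.

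The paper avoids the pointwise-then-uniform upgrade by a different construction. Rather than anchoring both the $\bx$-rule and the $\bx^*$-point at $\hat{\bzeta}$, it first proves a purely geometric shrinkage statement (Appendix~\ref{appx:shrinkage}): there is an explicit $\epsilon>0$, depending only on $\lambda$, $\bar{\epsilon}$, and two polyhedral invariants of the pair $(\Xi,\Xi^*)$, such that for every $\hat{\bzeta}\in\Xi^*$ one can find a $\bar{\bzeta}\in\Xi^*$ with
\[
B(\hat{\bzeta},\epsilon)\cap\Xi\ \subseteq\ \big\{\lambda\bar{\bzeta}+(1-\lambda)\bzeta:\bzeta\in B(\bar{\bzeta},\bar{\epsilon})\cap\Xi\big\}.
\]
This inclusion is established by induction on the faces of $\Xi^*$ and handles boundary and recession behavior simultaneously. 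One then uses the hypothesis at $\bar{\bzeta}$ (an $\bx$-rule with matrix in $\mathcal{C}$ feasible on $B(\bar{\bzeta},\bar{\epsilon})\cap\Xi$) together with $\bar{\bzeta}\in\Xi^*$ (a feasible $\bx^*$-point at $\bar{\bzeta}$); the same convex-combination algebra you set up---but now with \emph{no} perturbation term, since both ingredients are anchored at $\bar{\bzeta}$---produces a $\mathcal{C}$-rule for $\bx'$ feasible on the right-hand set, hence on $B(\hat{\bzeta},\epsilon)\cap\Xi$. The freedom to move the anchor from $\hat{\bzeta}$ to a nearby $\bar{\bzeta}$ is precisely what delivers uniformity over all of $\Xi^*$, and is what your step~3 is missing.
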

\begin{proof}{Proof.}
Fix $\lambda \in (0,1)$, and consider any realization $\hat{\bzeta} \in \Xi^*$. 
In  Appendix~\ref{appx:shrinkage}, we show that there exists a radius ${\epsilon} > 0$ (depending only on $\lambda$, $\bar{\epsilon}$, $\Xi$, and $\Xi^*$) 
and a realization $\bar{\bzeta} \in \Xi^*$ such that 
\begin{align}
B(\hat{\bzeta},{\epsilon}) \cap \Xi \subseteq  \{\lambda \bzeta +(1-\lambda) \bar{\bzeta}:B(\bar{\bzeta},\bar{\epsilon})\cap \Xi\}.
\label{line:mp:shrinkage_used}
\end{align}
Since $\bar{\bzeta} \in \Xi^*$, it follows that  $Q^{\mathcal{C}}_{\bar{\epsilon}}({\bx},\bar{\bzeta}) < \infty$, and thus there exists a ${\by}{}^0 \in \R^n$ and ${\bby} \in \mathcal{C}$ such that
\begin{align}
\bbt {\bx} + \bbw ({\by}^{0} + {\bby} \bzeta) \ge \bh(\bzeta), \quad \forall \bzeta \in B(\bar{\bzeta},\bar{\epsilon}) \cap \Xi. \label{line:mp:x_hat_feasible}
\end{align}
Moreover, since $\bar{\bzeta} \in \Xi^*$, by definition of $\Xi^*$ there exists a vector $\by^{0,*} \in \R^r$ such that
\begin{align}
\bbt {\bx^*} + \bbw \by^{0,*} \ge \bh(\bar{\bzeta}). \label{line:mp:x_star_feasible}
\end{align}
We now take a convex combination of lines \eqref{line:mp:x_hat_feasible} and \eqref{line:mp:x_star_feasible}:
\begin{align*}
\lambda \left( \bbt {\bx} + \bbw ({\by}^{0} + {\bby} \bzeta) \right)  + (1-\lambda) \left( \bbt \bx^*  + \bbw {\by}^{0,*} \right)  &\ge \lambda \bh(\bzeta) + (1-\lambda) \bh(\bar{\bzeta}) , \quad \forall \bzeta \in B(\bar{\bzeta},\bar{\epsilon}) \cap \Xi.
\end{align*}
Rearranging the left side of the inequality, and applying the linearity of $\bh(\cdot)$, the above can be rewritten as
\begin{align*}
 \bbt (\lambda \bx+(1-\lambda)\bx^*) + \bbw \left(   \lambda {\by}^{0}+(1-\lambda) {\by}^{0,*} -(1- \lambda) {\bby} \bar{\bzeta}+  {\bby} \left( \lambda \bzeta +(1-\lambda) \bar{\bzeta}  \right)\right) &\ge \bh( \lambda \bzeta +(1-\lambda) \bar{\bzeta}), \\
 &\quad  \forall \bzeta \in B(\bar{\bzeta},\bar{\epsilon}) \cap \Xi.
\end{align*}
Defining $\hat{\by}^{0} \triangleq  \lambda {\by}^{0}+ (1-\lambda) {\by}^{0,*}  - (1-\lambda) \hat{\bby} \bar{\bzeta}$,
and applying \eqref{line:mp:shrinkage_used}, we have shown that
\begin{align*}
 \bbt (\lambda \bx+(1-\lambda)\bx^*) + \bbw \left( \hat{\by}^{0}+  {\bby} \bzeta \right) &\ge \bh( \bzeta ), \quad \forall \bzeta \in B(\hat{\bzeta},{\epsilon}) \cap \Xi.
\end{align*}
The above shows that $Q^{\mathcal{C}}_{\epsilon}(\lambda \bx + (1-\lambda)\bx^*,\hat{\bzeta}) < \infty$. Since $\hat{\bzeta} \in \Xi^*$ was chosen arbitrarily,  and since the choice of $\epsilon$ did not depend on $\hat{\bzeta}$, our proof is complete.  
\Halmos \end{proof}

The final result of this section, Lemma~\ref{lem:mp:bound}, requires the following additional notation. Define the following polyhedral set: 
\begin{align*}
D \triangleq \{ \bdelta \in \R^{m}: \bbw^\intercal \bdelta = \bq, \; \bdelta \ge \bzero \}.
\end{align*}
We observe that $D$ is the feasible set associated with the dual problem of the second stage,  
\begin{align}
Q(\bx,\bzeta) =\min_{ \by\in\R^r}\left\{\bq^\top\by : \bbw\by\geq \bh(\bzeta)-\bbt\bx\right\}= \max_{\blambda\in D} \blambda^\intercal(\bh(\bzeta)-\bbt\bx), \label{line:poop}
\end{align}
where the second equality follows from strong duality. 
Condition~[\ref{ass:v_finite}] implies that 
$D\neq \emptyset$ and has at least one extreme point\footnote{Condition~\ref{ass:v_finite} says that $v^*$ is finite, and thus $Q(\bx^*,\bxi) > -\infty$ almost surely. Therefore, it follows from weak duality for linear optimization that  
\begin{align*}
- \infty < Q(\bx^*, \bxi ) &\le \max_{\bdelta \in D} \left \{ \bdelta^\intercal \left(\bh({\bxi}) - \bbt {\bx} \right) \right \} \quad \text{almost surely},
\end{align*}
and thus $D$ must be nonempty. Since the set satisfies $D\neq \emptyset$ and is in standard form, it has at least one extreme point.},
and we denote its set of extreme points by $\ext(D)$. 

We now present the final intermediary lemma, Lemma~\ref{lem:mp:bound}, which  provides an upper bound on the gap between the second-stage cost of the multi-policy approximation and that of the stochastic problem: 
\begin{lemma} \label{lem:mp:bound}
	Let $\bx\in\Real^n$, $\mathcal{C}\subseteq\Real^{r\times d}$, and ${\epsilon}\ge0$ satisfy $Q^{\mathcal{C}}_{{\epsilon}}(\bx,\bzeta)<\infty$ for all $\bzeta\in \Xi^*$. Then, 
	\begin{align*}Q^{\mathcal{C}}_\epsilon(\bx,\bzeta)  \le Q(\bx^*,\bzeta) + \epsilon \eta^{\mathcal{C}} + L\| \bx - \bx^*\|_*\quad \forall \bzeta\in \Xi^*,\end{align*}
	where \begin{align*}
	\eta^{\mathcal{C}} \triangleq  \sup_{\bby \in \mathcal{C},\;\bdelta \in \ext(D)}\left \{ \left \| {\bby}^\intercal \bq \right\|_* + \sum_{j=1}^m \delta_j  \left \|\bh_j - {\bby}^\intercal \bw_j  \right \|_* \right\} \;\;\text{and}\;\; L \triangleq \max_{\bdelta \in \ext(D)} \left \| \bbt^\intercal \bdelta \right \|.
	\end{align*}
\end{lemma}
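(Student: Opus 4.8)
The plan is to prove the bound pointwise: fix an arbitrary realization $\hat{\bzeta}\in\Xi^*$ and exhibit a single linear decision rule, with recourse matrix drawn from $\mathcal{C}$, that is feasible over $B(\hat{\bzeta},\epsilon)\cap\Xi$ for the first-stage decision $\bx$ and whose worst-case cost obeys the claimed inequality. Since $Q^{\mathcal{C}}_{\epsilon}(\bx,\hat{\bzeta})<\infty$ by hypothesis, I can pick \emph{any} feasible pair $(\tilde{\by}^0,\tilde{\bby})$ with $\tilde{\bby}\in\mathcal{C}$, that is, $\bbt\bx+\bbw(\tilde{\by}^0+\tilde{\bby}\bzeta)\ge\bh(\bzeta)$ for all $\bzeta\in B(\hat{\bzeta},\epsilon)\cap\Xi$. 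The recourse matrix $\tilde{\bby}$ will be held fixed throughout; only the intercept will be re-optimized to control the objective.

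\emph{Step 1 (a shift to decouple the intercept).} Writing a candidate rule as $\by(\bzeta)=\bp-\tilde{\bby}\hat{\bzeta}+\tilde{\bby}\bzeta$ with free $\bp\in\R^{r}$ (equivalently $\by^0=\bp-\tilde{\bby}\hat{\bzeta}$, $\bby=\tilde{\bby}\in\mathcal{C}$), feasibility over $B(\hat{\bzeta},\epsilon)\cap\Xi$ is equivalent to $\bbw\bp\ge\bv^{*}$ componentwise, where $v^{*}_{j}\triangleq\sup_{\bzeta\in B(\hat{\bzeta},\epsilon)\cap\Xi}\bei_{j}^{\intercal}\big(\bh(\bzeta)-\bbt\bx-\bbw\tilde{\bby}(\bzeta-\hat{\bzeta})\big)$, which is finite since the feasible set of $\bzeta$ is bounded. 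The worst-case objective of this rule is $\bq^{\intercal}\bp+\sup_{\bzeta\in B(\hat{\bzeta},\epsilon)\cap\Xi}\bq^{\intercal}\tilde{\bby}(\bzeta-\hat{\bzeta})\le\bq^{\intercal}\bp+\epsilon\norm{\tilde{\bby}^{\intercal}\bq}_{*}$. Hence $Q^{\mathcal{C}}_{\epsilon}(\bx,\hat{\bzeta})\le\epsilon\norm{\tilde{\bby}^{\intercal}\bq}_{*}+\min\{\bq^{\intercal}\bp:\bbw\bp\ge\bv^{*}\}$, and the inner linear program is feasible because $\bp=\tilde{\by}^{0}+\tilde{\bby}\hat{\bzeta}$ satisfies its constraints --- this is exactly where feasibility of $(\tilde{\by}^0,\tilde{\bby})$ enters.

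\emph{Step 2 (dualize, then pass to vertices and estimate).} Since the inner LP is primal feasible and $D\neq\emptyset$ by [\ref{ass:v_finite}] (see \eqref{line:poop}), strong LP duality gives $\min\{\bq^{\intercal}\bp:\bbw\bp\ge\bv^{*}\}=\max_{\bdelta\in D}\bdelta^{\intercal}\bv^{*}<\infty$; finiteness of the maximum of a linear functional over the pointed polyhedron $D$ forces attainment at an extreme point, so the value equals $\max_{\bdelta\in\ext(D)}\bdelta^{\intercal}\bv^{*}$. Next, dropping $\Xi$ from $B(\hat{\bzeta},\epsilon)\cap\Xi\subseteq B(\hat{\bzeta},\epsilon)$ and using $\bh(\bzeta)-\bbt\bx-\bbw\tilde{\bby}(\bzeta-\hat{\bzeta})=\bh(\hat{\bzeta})-\bbt\bx+(\bbh-\bbw\tilde{\bby})(\bzeta-\hat{\bzeta})$ yields $v^{*}_{j}\le\bei_{j}^{\intercal}(\bh(\hat{\bzeta})-\bbt\bx)+\epsilon\norm{\bh_{j}-\tilde{\bby}^{\intercal}\bw_{j}}_{*}$. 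For $\bdelta\in\ext(D)$, split $\bdelta^{\intercal}(\bh(\hat{\bzeta})-\bbt\bx)=\bdelta^{\intercal}(\bh(\hat{\bzeta})-\bbt\bx^{*})+(\bbt^{\intercal}\bdelta)^{\intercal}(\bx^{*}-\bx)$: the first term is at most $Q(\bx^{*},\hat{\bzeta})$ by the dual representation \eqref{line:poop} (which is finite because $\hat{\bzeta}\in\Xi^{*}$ and $D\neq\emptyset$), and the second is at most $L\norm{\bx-\bx^{*}}_{*}$ by Hölder's inequality and the definition of $L$. Combining, and recognizing that $\norm{\tilde{\bby}^{\intercal}\bq}_{*}+\sum_{j=1}^{m}\delta_{j}\norm{\bh_{j}-\tilde{\bby}^{\intercal}\bw_{j}}_{*}\le\eta^{\mathcal{C}}$ since $\tilde{\bby}\in\mathcal{C}$ and $\bdelta\in\ext(D)$, gives $Q^{\mathcal{C}}_{\epsilon}(\bx,\hat{\bzeta})\le Q(\bx^{*},\hat{\bzeta})+\epsilon\eta^{\mathcal{C}}+L\norm{\bx-\bx^{*}}_{*}$; since $\hat{\bzeta}\in\Xi^{*}$ was arbitrary, the lemma follows.

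I expect the main obstacle to be Step 1 together with the vertex-attainment claim in Step 2: one must set up the shifted decision rule so that the resulting intercept LP is simultaneously feasible (inherited from the witness $(\tilde{\by}^0,\tilde{\bby})$) and has exactly the right objective, and then argue its optimal value is finite and attained at an extreme point of $D$, so that the quantities $\eta^{\mathcal{C}}$ and $L$ --- defined as maxima over $\ext(D)$ --- actually dominate it. Everything after that is a routine chain of triangle- and Hölder-inequality estimates.
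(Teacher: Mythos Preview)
Your proposal is correct and follows essentially the same approach as the paper: fix a feasible recourse matrix $\tilde{\bby}\in\mathcal{C}$, re-optimize the intercept as a linear program, dualize to land in $D$, pass to $\ext(D)$, and bound the per-constraint slack via the dual norm after dropping $\Xi$ from the ball. The only cosmetic difference is that the paper first proves $Q^{\mathcal{C}}_\epsilon(\bx,\hat{\bzeta})\le Q(\bx,\hat{\bzeta})+\epsilon\eta^{\mathcal{C}}$ and then separately bounds $Q(\bx,\hat{\bzeta})-Q(\bx^*,\hat{\bzeta})\le L\|\bx-\bx^*\|_*$, whereas you merge these by directly splitting $\bdelta^\intercal(\bh(\hat{\bzeta})-\bbt\bx)$; your shift $\bp=\by^0+\tilde{\bby}\hat{\bzeta}$ is just a convenient reparametrization of the same intercept LP.
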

\begin{proof}{Proof.}
	We begin by showing that the quantities $Q^{\mathcal{C}}_\epsilon(\bx,\hat{\bzeta})$, $Q(\bx,\hat{\bzeta})$, and $Q(\bx^*,\hat{\bzeta})$ are finite for all realizations $\hat{\bzeta}\in \Xi^*$. Indeed, it follows immediately from the definition of $\Xi^*$ and the construction of $\bx$, $\mathcal{C}$, and $\epsilon$ that 
	\begin{align*}
	Q^{\mathcal{C}}_\epsilon(\bx,\hat{\bzeta}), \;Q(\bx^*,\hat{\bzeta}) < \infty \quad \forall \hat{\bzeta} \in \Xi^*. 
	\end{align*}
	Furthermore, since the polyhedron $D \subseteq \R^m$ is nonempty, it follows from line~\eqref{line:poop} that
	\begin{align*}
		Q(\bx,\hat{\bzeta}), Q(\bx^*,\hat{\bzeta}) &>-\infty \quad \forall \hat{\bzeta} \in \Xi^*.
			\end{align*}
	 Since Definition~\ref{def:Q_C} implies that the inequality 
	$Q(\bx,\hat{\bzeta}) \le Q^{\mathcal{C}}_{\epsilon}(\bx,\hat{\bzeta})$ holds for all realizations $\hat{\bzeta}\in \Xi^*$, we have shown that the quantities $Q^{\mathcal{C}}_\epsilon(\bx,\hat{\bzeta})$, $Q(\bx,\hat{\bzeta})$, and $Q(\bx^*,\hat{\bzeta})$ are finite for all realizations $\hat{\bzeta}\in \Xi^*$.  
	
	
	Now consider any arbitrary realization $\hat{\bzeta} \in \Xi^*$. 
	Since $Q^{\mathcal{C}}_\epsilon(\bx,\hat{\bzeta})$ is finite, we observe that there exists a recourse matrix $\tilde{\bby} \in \mathcal{C}$ such that the following optimization problem has a finite optimal cost:
	\begin{align}
	 \left \{ 
	\begin{aligned}
	& \underset{\by^0 \in \R^{r}}{\text{minimize}}&& \max_{\bzeta \in B({\hat{\bzeta}},\epsilon) \cap \Xi} \bq^\intercal \left(\by^0 + \tilde{\bby} \bzeta \right) \\
	& \text{subject to}&&\bbt {\bx} + \bbw \left(\by^0 + \tilde{\bby} \bzeta \right) \geq \bh(\bzeta) ,\;\forall \bzeta \in  B({\hat{\bzeta}}, \epsilon) \cap \Xi.
	\end{aligned} \right. \label{line:mp:nice_ub}
	\end{align}
	In particular,  we note that the optimal cost of \eqref{line:mp:nice_ub} is an upper bound on $Q^{\mathcal{C}}_\epsilon (\bx,\hat{\bzeta}) > - \infty$, and the existence of such a recourse matrix follows from $Q^{\mathcal{C}}_\epsilon (\bx,\hat{\bzeta}) < \infty$.
	
	 With our goal of obtaining an upper bound on \eqref{line:mp:nice_ub}, we develop the following intermediary result: 
	\begin{align}
	&\left \{ \begin{aligned} & \underset{\by^0 \in \R^{r}}{\text{minimize}}&& \bq^\intercal\by^0  \\
	& \text{subject to}&&\bbt \bx + \bbw \left(\by^0 + \tilde{\bby} \bzeta \right) \geq \bh(\bzeta) ,\;\forall \bzeta \in  B(\hat{\bzeta},\epsilon) \cap \Xi
	\end{aligned} \right. \notag \\
	&= \left \{ \begin{aligned} & \underset{\by^0 \in \R^{r}}{\text{minimize}}&& \bq^\intercal\by^0  \\
	& \text{subject to}&&\boldsymbol{t}_j^\intercal \bx + \bw_j^\intercal \by^0 - h_j^0 \geq \max_{\bzeta  \in  B(\hat{\bzeta},\epsilon) \cap \Xi} ( \bh_j - \tilde{\bby}^\intercal \bw_j )^\intercal \bzeta \quad \forall j \in [m]
	\end{aligned} \right .\notag\\ 
	&= \max_{\bdelta \in D} \left \{ \sum_{j=1}^m \delta_j \left( h_j^0 - \boldsymbol{t}_j^\intercal \bx +  \max_{\bzeta  \in  B(\hat{\bzeta},\epsilon) \cap \Xi} ( \bh_j - \tilde{\bby}^\intercal \bw_j )^\intercal \bzeta\right)\right \}\notag\\
	&= \max_{\bdelta \in \ext(D)} \left \{ \sum_{j=1}^m \delta_j \left( h_j^0 - \boldsymbol{t}_j^\intercal \bx +  \max_{\bzeta  \in  B(\hat{\bzeta},\epsilon) \cap \Xi} ( \bh_j - \tilde{\bby}^\intercal \bw_j )^\intercal \bzeta\right)\right \}\notag\\
	&\le \max_{\bdelta \in \ext(D)} \left \{ \sum_{j=1}^m \delta_j \left( h_j^0 - \boldsymbol{t}_j^\intercal \bx + \max_{\bzeta  \in  B(\hat{\bzeta},\epsilon)} ( \bh_j - \tilde{\bby}^\intercal \bw_j )^\intercal \bzeta\right)\right \}\notag \\
	&=\max_{\bdelta \in \ext(D)} \left \{  \bdelta^\intercal \left( \bh(\hat{\bzeta}) - \bbt \bx - \bbw \tilde{\bby} \hat{\bzeta} \right) + \epsilon \sum_{j=1}^m \delta_j  \left \|\bh_j - \tilde{\bby}^\intercal \bw_j  \right \|_* \right \}. \label{line:mp:second_bound_mp}
	\end{align}
	Indeed, the first equality is a standard reformulation technique in robust optimization. The second equality stems from strong duality for linear optimization, which holds since the problem has a finite objective value. The third equality follows from the fundamental theorem of linear programming, which holds because the set of extreme points of $D$ is nonempty and the problem has a finite optimal cost. The inequality holds because we have removed constraints from the inner maximization problems. The final equality follows directly from the definition of the dual norm.  
	
	We now combine \eqref{line:mp:nice_ub} and  \eqref{line:mp:second_bound_mp}:
	\begin{align} 
	Q^{\mathcal{C}}_\epsilon (\bx,\hat{\bzeta}) \; 
	&\le \; \; \max_{\bzeta \in B({\hat{\bzeta}},\epsilon) \cap \Xi} \bq^\intercal \tilde{\bby} \bzeta \; \; + \;\; \left \{ \begin{aligned}
	& \underset{\by^0 \in \R^{r}}{\text{minimize}}&& \bq^\intercal\by^0  \\
	& \text{subject to}&&\bbt\bx + \bbw \left(\by^0 + \tilde{\bby} \bzeta \right) \geq \bh(\bzeta) ,\;\forall \bzeta \in  B({\hat{\bzeta}},\epsilon) \cap \Xi.
	\end{aligned} \right.  \notag \\
	&\le \bq^\intercal\tilde{\bby}\bzeta+\epsilon\norm{\tilde{\bby}^\intercal\bq}_* + \max_{\bdelta \in \ext(D)} \left \{  \bdelta^\intercal \left( \bh(\hat{\bzeta}) - \bbt \bx - \bbw \tilde{\bby} \hat{\bzeta} \right) + \epsilon \sum_{j=1}^m \delta_j  \left \|\bh_j - \tilde{\bby}^\intercal \bw_j  \right \|_* \right \} \notag \\
	&= \max_{\bdelta \in \ext(D)} \left \{ \left( \bq - \bbw^\intercal \bdelta\right)^\intercal \tilde{\bby} \hat{\bzeta} + \bdelta^\intercal \left( \bh(\hat{\bzeta}) - \bbt \bx \right) + \epsilon \left(\norm{\tilde{\bby}^\intercal\bq}_* +  \sum_{j=1}^m \delta_j  \left \|\bh_j - \tilde{\bby}^\intercal \bw_j  \right \|_* \right) \right \} \notag \\
	&= \max_{\bdelta \in \ext(D)} \left \{  \bdelta^\intercal \left( \bh(\hat{\bzeta}) - \bbt \bx \right) + \epsilon \left(\norm{\tilde{\bby}^\intercal\bq}_* +  \sum_{j=1}^m \delta_j  \left \|\bh_j - \tilde{\bby}^\intercal \bw_j  \right \|_* \right) \right \}, \notag  \\
	&\le \max_{\bdelta \in \ext(D)} \left \{  \bdelta^\intercal \left( \bh({\hat{\bzeta}}) - \bbt {\bx} \right) \right \} +\epsilon  \max_{\bdelta \in \ext(D)} \left \{ \norm{\tilde{\bby}^\intercal\bq}_* + \sum_{j=1}^m \delta_j  \left \|\bh_j - \tilde{\bby}^\intercal \bw_j  \right \|_* \right \} \notag \\
	&\le \max_{\bdelta \in \ext(D)} \left \{  \bdelta^\intercal \left( \bh({\hat{\bzeta}}) - \bbt {\bx} \right) \right \} + \epsilon \max_{\bdelta \in \ext(D),\bby \in \mathcal{C}}\left \{ \left \| {\bby}^\intercal \bq \right\|_* + \sum_{j=1}^m \delta_j  \left \|\bh_j - {\bby}^\intercal \bw_j  \right \|_* \right\} \notag \\
	&= Q({\bx},{\hat{\bzeta}}) + \eta^\mathcal{C} \epsilon. \label{line:say_hi_to_chris_for_me_may_27_2020}
	\end{align}
	Indeed, the first inequality follows from separating out the objective function of \eqref{line:mp:nice_ub} and using the fact that the optimal cost of  \eqref{line:mp:nice_ub} is an upper bound on $Q^{\mathcal{C}}_\epsilon(\bx,\hat{\bzeta})$. The second inequality follows from the definition of the dual norm and from line~\eqref{line:mp:second_bound_mp}. 
	The first equality follows from rearranging terms, and the second equality follows because $\bbw^\intercal \bdelta = \bq$ for all $\bdelta \in D$. 
	The third inequality follows from separating the single maximization problem into two separate maximization problems. The fourth inequality follows from maximizing over the recourse matrices. The final equality follows from the definition of $\eta^\mathcal{C}$, line~\eqref{line:poop}, and the fundamental theorem of linear programming, which holds because $Q(\bx,\hat{\bzeta})$ is finite. 
	
		Finally, 
	\begin{align}
	Q(\bx,\hat{\bzeta}) - Q({\bx}^*,\hat{\bzeta})   &= \max_{\bdelta \in \textnormal{ext}(D)} \left \{ \bdelta^\intercal (\bh(\hat{\bzeta}) - \bbt {\bx}) \right \} - \max_{\bdelta \in \textnormal{ext}(D)} \left \{ \bdelta^\intercal (\bh(\bzeta) - \bbt {\bx}^*) \right \} \notag \\
	&\le  \max_{\bdelta \in \textnormal{ext}(D)} \left \{ \left(\bdelta^\intercal (\bh(\hat{\bzeta}) - \bbt {\bx}) \right) - \left( \bdelta^\intercal (\bh(\hat{\bzeta}) - \bbt {\bx}^*) \right) \right \} \notag \\
	&=  \max_{\bdelta \in \textnormal{ext}(D)} \left \{ \bdelta^\intercal \bbt ({\bx} - {\bx}^*)\right \} \notag  \\
	&\le  \left \| {\bx} - {\bx}^* \right \|_* \max_{\bdelta \in \ext(D)} \left \| \bbt^\intercal \bdelta \right \| \notag \\
	&\le L \| \bx - \bx^* \|_*. 
	\label{line:mp:bound_on_saa_for_x_1_x_2}
	\end{align}
	Indeed, the first equality follows from line~\eqref{line:poop} and the fundamental theorem of linear programming, which holds because  $Q(\bx,\hat{\bzeta})$ and $Q(\bx^*,\hat{\bzeta})$ are finite and have at least one extreme point. The first inequality follows from using the same maximizer from both maximum problems, the second inequality follows from the definition of the dual norm, and the third inequality follows from the definition of $L$. 
	
	Combining \eqref{line:say_hi_to_chris_for_me_may_27_2020} and \eqref{line:mp:bound_on_saa_for_x_1_x_2}, and since $\hat{\bzeta} \in \Xi^*$ was chosen arbitrarily, our proof is complete. 
	\Halmos \end{proof}


\subsection{Proof of Theorem~\ref{thm:mp:conv}} \label{sec:mp:proof_thm}
In this section we present the proof of Theorem~\ref{thm:mp:conv}. We first discuss the proof from a broad viewpoint, focusing on its overarching strategy and highlighting its key steps, and then utilize the technical intermediary lemmas from the previous sections to prove the theorem. 

Theorem~\ref{thm:mp:conv} is essentially a combination of two results. The first result, presented in line~\eqref{line:mp:conv}, says that the optimal costs of two-stage sample robust optimization and its multi-policy approximation converge almost surely to the optimal cost of two-stage stochastic linear optimization. The second result establishes convergence guarantees for  the sequences of optimal first-stage decisions. It turns out, conveniently, that the second result will follow as a direct consequence of line~\eqref{line:mp:conv} together with a convergence guarantee of \cite{robinson1996analysis}  for near-optimal solutions of epiconvergent functions. 
Our efforts therefore focus on the proof of line~\eqref{line:mp:conv}.  

There are two main steps in our proof of line~\eqref{line:mp:conv}. The first step is to show that $\liminf_{N \to \infty} \hat{v}_N^{\textnormal{SRO}}$ and $\liminf_{N \to \infty} \hat{v}_N^{\textnormal{MP}}$ are upper bounds on $v^*$ almost surely. Fortunately this step is straightforward to show, as both quantities are upper bounds on $\liminf_{N \to \infty} \hat{v}_N^{\textnormal{SAA}}$, and it follows directly from results of \cite{king1991epi} and \cite{robinson1996analysis} that the optimal cost of the sample average approximation will converge almost surely to $v^*$ under conditions [\ref{ass:v_finite}-\ref{ass:saa}] (see below for details).  
The second step in our proof of line~\eqref{line:mp:conv}  is  to show  that $\limsup_{N \to \infty} \hat{v}_N^{\textnormal{SRO}}$ and $\limsup_{N \to \infty} \hat{v}_N^{\textnormal{MP}}$ are lower bounds on $v^*$ almost surely; this is proved by combining Lemmas~\ref{lem:mp:feas_then_C_feas}, \ref{lem:mp:feas}, and \ref{lem:mp:bound} from Sections~\ref{sec:mp:asymptotic:L_Feasibility} and \ref{sec:mp:lemmas}. 

In view of the above discussion, we now present the  proof of Theorem~\ref{thm:mp:conv}.

\begin{proof}{Proof of Theorem~\ref{thm:mp:conv}.}
	We begin by showing that  $\liminf_{N \to \infty} \hat{v}_N^{\textnormal{SRO}}$ and $\liminf_{N \to \infty} \hat{v}_N^{\textnormal{MP}}$ are upper bounds on $v^*$ almost surely.     Indeed, it follows directly from conditions [\ref{ass:v_finite}-\ref{ass:saa}],  \citet[Theorem 3.1]{king1991epi},  and \citet[Corollary 3.11]{robinson1996analysis} that
	$\lim_{N \to \infty} \hat{v}_N^{\textnormal{SAA}} = v^*$ almost surely. 
	Moreover, we readily observe that the inequality $\hat{v}_N^{\textnormal{SRO}} \ge \hat{v}_N^{\textnormal{SAA}}$ always holds, and Theorem~\ref{thm:mp:hierarchy} further established the inequality $\hat{v}_N^{\textnormal{MP}} \ge \hat{v}_N^{\textnormal{SRO}}$.  Therefore, 
	\begin{align*}
	v^*  \overset{ \text{a.s.}}{=} \lim_{N \to \infty} \hat{v}_N^{\textnormal{SAA}} \le \liminf_{N \to \infty} \hat{v}_N^{\textnormal{SRO}}  \le \liminf_{N \to \infty} \hat{v}_N^{\textnormal{MP}}.
	\end{align*}
	
	We next prove the other direction, namely,  that $\limsup_{N \to \infty} \hat{v}_N^{\textnormal{SRO}}$ and $\limsup_{N \to \infty} \hat{v}_N^{\textnormal{MP}}$ are lower bounds on $v^*$ almost surely. Indeed,  Lemma~\ref{lem:mp:feas_then_C_feas} shows that [\ref{ass:feas}] implies [\ref{lem:mp:feas:III}]. Following the definition of condition~[\ref{lem:mp:feas:III}], there exists a first-stage decision $\bar{\bx}\in\R^n$, a radius $\bar{\epsilon}>0$, and a finite set of recourse matrices $\mathcal{C}\subseteq\Real^{r\times d}$ such that
	$$Q_{\bar{\epsilon}}^\mathcal{C}(\bar{\bx},\bzeta)<\infty \quad \forall \bzeta\in  \Xi^*\subseteq\Xi.$$ 
	Choose any arbitrary $\lambda \in (0,1)$
	Then Lemma~\ref{lem:mp:feas} implies that there exists a radius $\epsilon > 0$ such that 
	$$Q^{\mathcal{C}}_{\epsilon}(\lambda\bar{\bx}+(1-\lambda)\bx^*,\bzeta) < \infty, \quad  \forall \bzeta \in \Xi^*.$$
	For notational convenience, let $\bx \triangleq \lambda\bar{\bx}+(1-\lambda)\bx^*$.
	Moreover, condition~[\ref{ass:linear_epsilon}] implies that there exists an integer $\bar{N} \in \N$ such that $\epsilon_N \le \epsilon$ for all $N \ge \bar{N}$. 
	Therefore,  
	\begin{align}
	\limsup_{N \to \infty} \hat{v}_N^{\textnormal{SRO}} \le \limsup_{N \to \infty} \hat{v}_N^{\textnormal{MP}} &\le \limsup_{N \to \infty} \hat{V}^{\textnormal{MP}}_N(\bx) \notag \\
	 &= \limsup_{N \to \infty} \left \{ \bc^\intercal \bx + \frac{1}{N} \sum_{i=1}^N Q^{\R^{r \times d}}_{\epsilon_N}(\bx,\bxi{}^i) \right \} \notag \\
	&\le \limsup_{N \to \infty} \left \{ \bc^\intercal \bx + \frac{1}{N} \sum_{i=1}^N Q^{\mathcal{C}}_{\epsilon_N}(\bx,\bxi{}^i) \right \} \notag\\
	&\le \limsup_{N \to \infty} \left \{ \bc^\intercal \bx + \frac{1}{N} \sum_{i=1}^N \left( Q(\bx^*,\bxi{}^i) + \epsilon_N \eta^{\mathcal{C}}  + L\| \bx - \bx^*\|_*\right)\right \} \quad \text{a.s.} \notag \\
	&= \limsup_{N \to \infty} \left \{ \bc^\intercal \bx + \frac{1}{N} \sum_{i=1}^N  Q(\bx^*,\bxi{}^i)  \right \} + L\| \bx - \bx^*\|_* \notag \\
	&= \limsup_{N \to \infty} \left \{ \bc^\intercal \bx + \frac{1}{N} \sum_{i=1}^N  Q(\bx^*,\bxi{}^i)  \right \} + L \lambda \| \bar{\bx} - \bx^*\|_* \notag \\
	&=v^* + L\lambda \| \bar{\bx} - \bx^*\|_*  \quad \text{a.s.} \notag 
	\end{align}
	Indeed, the first line follows from Theorem~\ref{thm:mp:hierarchy}, as well as the fact that the first-stage decision $\bx$ is potentially suboptimal for the multi-policy approximation. The second line follows from $\Pi = \mathcal{L}$, which is given by condition~[\ref{ass:linear_epsilon}]. The third line holds because the collection of recourse matrices $\mathcal{C}$ is a subset of $\R^{r \times d}$. The fourth line follows from Lemma~\ref{lem:mp:bound}, along with the observation that  $Q^{\mathcal{C}}_{\epsilon_N}(\bx,\bzeta) \le Q^{\mathcal{C}}_{\epsilon}(\bx,\bzeta) < \infty$ for all  $ N \ge \bar{N}$ and $\bzeta \in \Xi^*$, and holds  almost surely because $\Prb(\bxi^i \in \Xi^* \; \text{for all } i) = 1$. The fifth line follows from $\epsilon_N \to 0$ as $N \to \infty$, which is given by condition~[\ref{ass:linear_epsilon}]. The sixth line follows from the definition of $\bx$. The final line follows from the strong law of large numbers and the fact that $\bx^*$ is optimal for Problem~\eqref{prob:mp:first_stage}. Taking $\lambda \in (0,1)$ to be arbitrarily close to 0, we conclude that 
		\begin{align*}
	 \limsup_{N \to \infty} \hat{v}_N^{\textnormal{SRO}}  \le \limsup_{N \to \infty} \hat{v}_N^{\textnormal{MP}} \overset{ \text{a.s.}}{\le}  v^* 
	\end{align*}
	Combining the above results, our proof of line~\eqref{line:mp:conv} is complete. 
	
		
	We now establish convergence of the optimal first-stage decisions. Let $\{ \hat{\bx}_N^{\textnormal{MP}} \}_{N\in\N}$ be a sequence of optimal first-stage decisions for Problem~\eqref{prob:mp:multi_policy}.
	Then it follows from line \eqref{line:mp:conv} that
	\begin{align*}
	v^* \overset{\text{a.s.}}{=} \lim_{N \to \infty} \hat{v}_N^{\textnormal{MP}} = \lim_{N \to \infty} \hat{V}_N^{\textnormal{MP}}(\hat{\bx}_N^{\textnormal{MP}}) \ge  \lim_{N \to \infty} \hat{V}_N^{\textnormal{SAA}}(\hat{\bx}_N^{\textnormal{MP}}) \ge  \lim_{N \to \infty} \hat{v}_N^{\textnormal{SAA}} \overset{\text{a.s.}}{=} v^*.
	\end{align*}
	The above implies that, almost surely, there is a sequence of nonnegative numbers $\{ \eta_N \}$ converging to zero which satisfy $\hat{V}_N^{\textnormal{SAA}}(\hat{\bx}_N^{\textnormal{MP}}) \le \hat{v}_N^{\textnormal{SAA}} + \eta_N$ for all $N \in \N$.  Therefore, it follows directly from conditions~[\ref{ass:v_finite}-\ref{ass:saa}] and \citet[Proposition 2.1 and Theorem 3.1]{king1991epi} that any accumulation point of the sequence $\{ \hat{\bx}_N^{\textnormal{MP}} \}_{N\in\N}$ is an optimal first-stage decision for Problem~\eqref{prob:mp:first_stage} almost surely. The analogous convergence result for optimal first-stage decisions of Problem~\eqref{prob:mp:sro} follows by identical reasoning and is thus omitted. 
	\halmos 
\end{proof}


\section{Computational Experiments} \label{sec:mp:experiments}
In this section, we demonstrate the practical value of solving two-stage sample robust optimization using the multi-policy approximation. More specifically, we analyze the computational tractability and out-of-sample performance of the following various data-driven approaches: 
\begin{enumerate}
	\item {\bf MP Affine} - The multi-policy approximation  for two-stage sample robust optimization using linear decision rules, where uncertainty sets in the SRO formulation use the $\ell_2$ norm.  
	\item {\bf SP Affine} - The single-policy approximation for two-stage sample robust optimization using linear decision rules, where uncertainty sets in the SRO formulation use the $\ell_2$ norm.  
	\item {\bf Wass SDP} - The semidefinite (SDP) conic approximation of \citet{hanasusanto2016conic} for two-stage distributionally robust optimization with the type-2 Wasserstein ambiguity set using the $\ell_2$ norm.   
	\item {\bf Wass SW} - The event-wise adaptation from \citet{chen2019RSO} for two-stage distributionally robust optimization using the same ambiguity set as Wass SDP. 
	\item {\bf Approx PCM} - The lifted linear decision rule approach of \cite{bertsimas2018adaptive} for two-stage distributionally robust optimization, where the ambiguity set is defined by the first and second moments (estimated from the training data).
	\item {\bf SAA} - The sample average approximation.
\end{enumerate}
To compare the methods, we first generate a testing dataset of size $\tilde{N} = 10^4$. Then, for varying values of $N$, we generate $M=100$ training datasets of size $N$. For each training set $j \in [M]$ and each method $\mathcal{A}$, we compute the optimal first-stage decision $\bx^{\mathcal{A},j}_N$ and the corresponding objective value $\hat{v}^{\mathcal{A},j}_N$. 
The expected cost using the first-stage decision is estimated by
\begin{align*}
V(\bx^{\mathcal{A},j}_N) = \bc^\intercal \bx^{\mathcal{A},j}_N + \frac{1}{\tilde{N}} \sum_{i=1}^{\tilde{N}} Q(\bx^{\mathcal{A},j}_N,\tilde{\bxi}{}^i).
\end{align*}
We compare each method along the following metrics: 
\begin{enumerate}
	\item {\bf Tractability} - The running time for the method, averaged over the $M$ training sets.
	\item {\bf Feasibility} - 
	The proportion of realizations in the test set for which the first-stage decision is feasible: 
	$$\% \text{Infeasible Realizations}= \frac{1}{M}\sum_{i=1}^M \frac{\sum_{i=1}^{\tilde{N}}\mathbbm{1} \left( Q(\bx^{\mathcal{A},j}_N,\tilde{\bxi}{}^i) = \infty \right)}{\tilde{N}}.$$
	\item {\bf Optimality gap} - The relative gap between the expected cost using $\bx^{\mathcal{A},j}_N$ and the optimal expected cost:
	$$\Delta V_N^{\mathcal{A}}=\frac{1}{M}\sum_{j=1}^{M}\frac{V(\bx^{\mathcal{A},j}_N)-v^*}{v^*},$$
	where $v^*$ is estimated by solving SAA for an independent dataset of size $10^5$.
	\item {\bf Prediction error} - The relative difference between method's optimal cost and the expected cost of its optimal first stage decision: 
	$$\Delta \hat{v}_N^{\mathcal{A}}=\frac{1}{M}\sum_{j=1}^{M}\frac{\hat{v}^{\mathcal{A},j}_N-V(\bx^{\mathcal{A},j}_N)}{V(\bx^{\mathcal{A},j}_N)}.$$
\end{enumerate}

\subsection{Capacitated Network Inventory Management} \label{sec:mp:cap_lot}
\subsubsection{Problem Description.} We consider a two-stage capacitated network inventory problem. There are $n$ locations, and each location $i$ has an unknown demand ${\xi}_i$ which must be satisfied. The demand can be satisfied by existing stock $x_i$, which is bought in advance at the location, or by transporting an amount $y_{ji}$ of units from location $j$, which is determined after the demand is realized. The cost of buying stock in advance at location $i$ is $a_i$ per unit, and the cost of transporting each unit from location $i$ to $j$ is $c_{ij}$. Each location has a limited stock capacity of $K$ units, and no more than $b_{ij}$ units can be transported from $i$ to $j$. 
Our goal is to find optimal initial stock which minimizes the expected total cost
\begin{equation}\label{prob:mp:LotSizing}
\begin{aligned}
v^* = \quad &\underset{\bx\in\R^n}{\text{minimize}}&& \sum_{i=1}^{n} a_ix_i+\Exp[Q(\bx,\bxi)], 
\end{aligned}
\end{equation}
where the second-stage cost is 
\begin{equation*}
\begin{aligned}
Q(\bx,\bxi) = \quad &\underset{\by \in \R^{n \times n}}{\textnormal{minimize}}&&\sum_{j\neq i} c_{ij}y_{ij}\\
&\text{subject to} && x_i-\sum_{j\neq i} y_{ij}+\sum_{j\neq i} y_{ji} \geq \xi_i\\
&&&0 \le x_i \leq K \\
&&&0\le y_{ij} \leq b_{ij}.
\end{aligned}
\end{equation*}
We assume that the underlying probability distribution is unknown. Instead, our only information comes from historical data, $\bxi^1,\ldots,\bxi{}^N$, and knowledge that the support is contained in
\begin{equation}\label{eqn:mp:lotsizing_supersupport}
\Xi= \left \{\bxi\in\Real^n: 0 \le \xi_i\leq  K \right\}.
\end{equation}

\subsubsection{Experiments.} We generate a network of size $n=10$, where each location is drawn from a standard 2D Gaussian distribution. For each locations $i \neq j$, we let $c_{ij}$ be the Euclidean distance between the locations, $a_i=1$,  $K=20$, and $b_{ij}=K/(n-1)u_{ij}$ where $u_{ij}$ are i.i.d. random variables generated from a standard uniform distribution.

We consider the robustness parameters for Problem~\eqref{prob:mp:sro} with rates of $\epsilon_N =10N^{-1/10}$ and $\epsilon_N =20N^{-1/10}$. The choice of these robustness parameters is inspired by the probabilistic performance guarantees that can be obtained by the type-$\infty$ Wasserstein ambiguity set for well-conditioned bounded distributions, as described in Appendix~\ref{appx:finite_sample}. Roughly speaking, these rates for the robustness parameter provide assurance that, if the underlying distribution happens to satisfy certain conditions and the number of data points $N$ is sufficiently large, then any feasible first-stage decision of SRO, MP Affine, or SP Affine will be feasible for the stochastic problem. In practice, the robustness parameters can be determined, for example, by performing $k$-fold cross-validation over the historical data.  

In contrast, Approx PCM, Wass SDP and Wass SW are guaranteed to produce first-stage decisions which are feasible for the stochastic problem.  However, these approaches will restrict their search to first-stage decisions which have a feasible second-stage decision for all realizations $\bzeta \in \Xi$. Therefore, these methods will yield only one feasible first-stage decision, namely, $\bx=(K,\ldots,K)$. For this first-stage decision,  the optimal second-stage decision rule is given by $\by(\bzeta)=\bzero$ for all $\bzeta\in\Xi$. Thus, regardless of the values of the historical data,  the optimal costs of Approx PCM, Wass SDP and Wass SW  will be equal to $K\sum_{i=1}^n a_i$. 
  
Finally, we consider three probability distributions (uniform, normal, and lognormal) for the demand components (with mean $K/2$ and standard deviation $K/\sqrt{12}$) and used rejection sampling so that the (unknown) support of the each multivariate distribution is
\begin{equation}\label{eqn:mp:lotsizing_truesupport}
\tilde{\Xi}= \left \{\bxi\in\Real^n: 0 \le \xi_i\leq  K,\; i\in[n],\;\;\sum_{i\in[n]} \xi_i\leq \sqrt{n}K \right\}.
\end{equation}

\subsubsection{Results.} 
\paragraph{Tractability.}
The MP and SP methods' running times are approximately the same, ranging between $0.4$ and $1.5$ seconds per data point for all dataset sizes $N$. The SAA running times range between one and two milliseconds per data point. Since the remaining methods have trivial closed form solution of $\bx=(K,\ldots,K)$, their running times are omitted. 

\begin{figure}[t]
	\centering
	\includegraphics[width=0.8\textwidth]{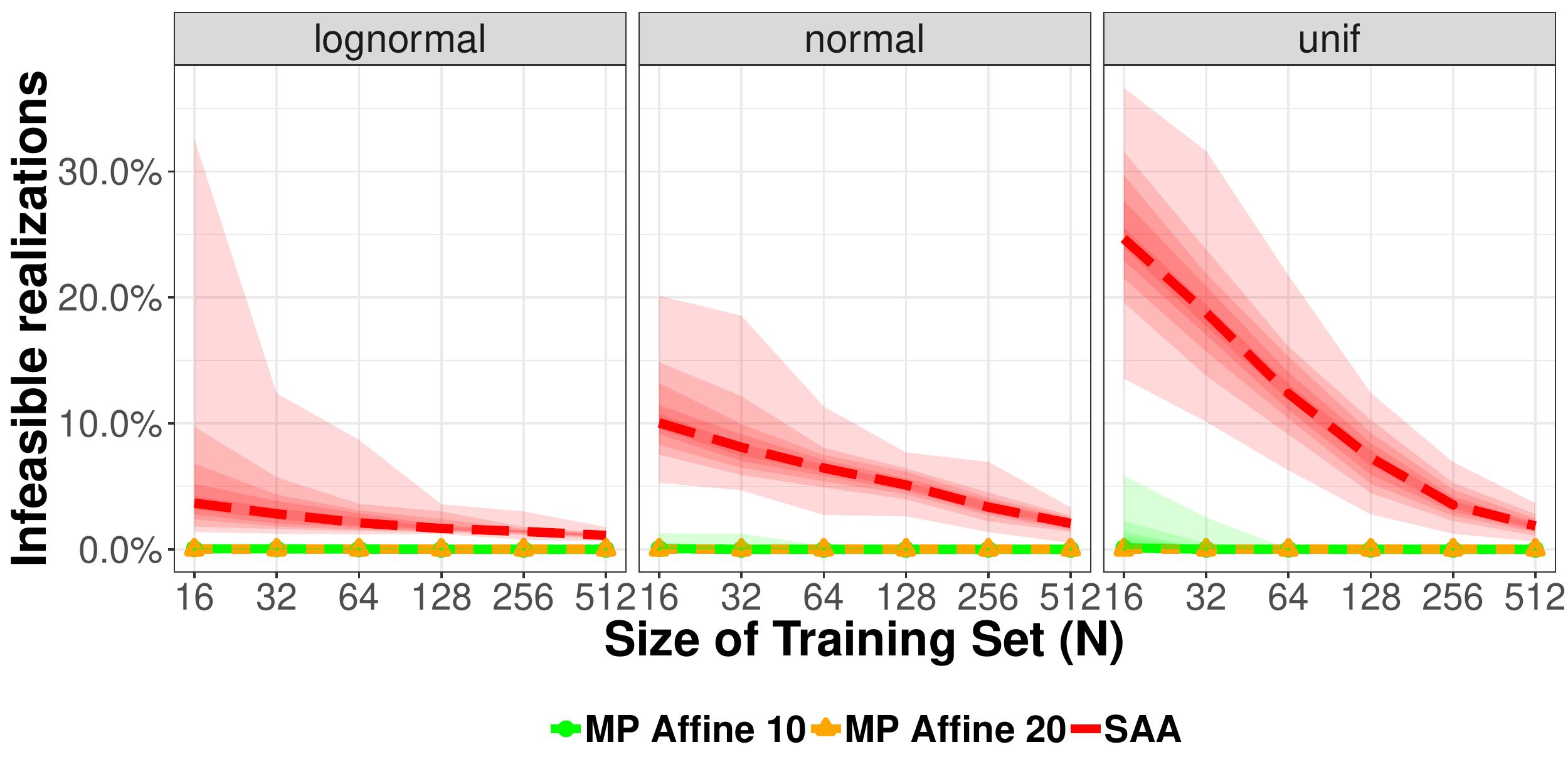}
	\caption{The percent of realization for which the method's first-stage decision was feasible in the inventory management  experiments. The lines represent the 50\% percentiles over the $M=100$ training datasets and the shading represents the distributions, getting lighter as the we get further away from the 50\% percentile and ending with the minimum and maximum value. SP Affine 10 and SP Affine 20 have similar feasibility guarantees as MP Affine 10 and MP Affine 20 and are omitted for clarity. }\label{fig:mp:Lotsizing_OutOfSampleFeasibility}
\end{figure}
\paragraph{ Feasibility.} 
 Figure~\ref{fig:mp:Lotsizing_OutOfSampleFeasibility} compares the out-of-sample feasibility of the different approaches. 
 These results demonstrate that the feasibility performance of the multi-policy approximation significantly outperform that of SAA across all distributions and choices of the robustness parameter $\epsilon_N$. Consistent with the theoretical performance guarantees from Appendix~\ref{appx:finite_sample}, the results show that multi-policy approximation of SRO can be effective in addressing a problem without relative complete recourse. Moreover, the results demonstrate that the multi-policy approximation generates solutions which are feasible with high probability even for moderate size data-sets, without restricting the solutions to those feasible for every realization in $\Xi$ as the other distributionally robust approaches. 
 
 \begin{figure}
	\centering
	\includegraphics[width=0.8\textwidth]{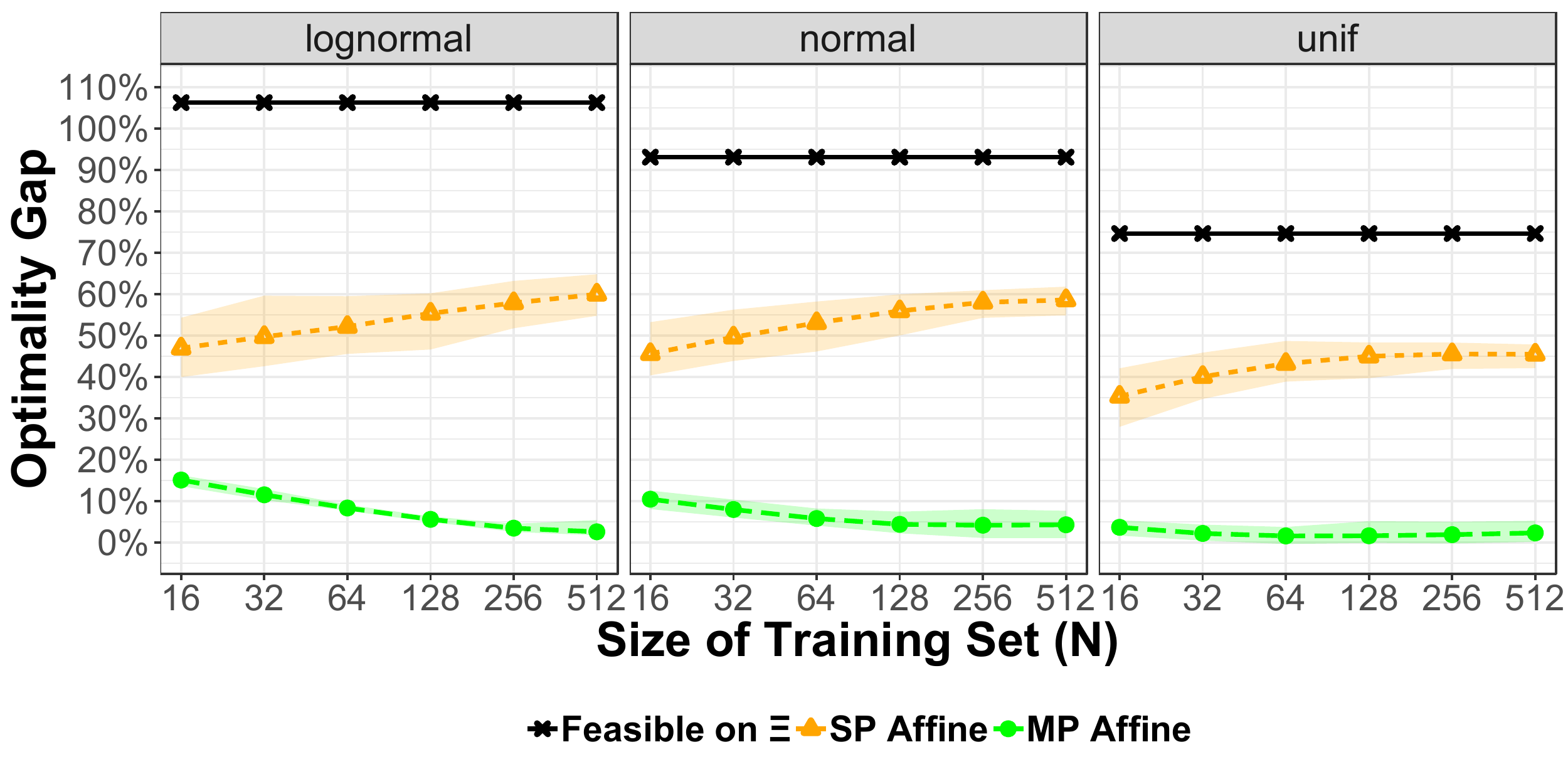}
	\caption{The optimality gap for the inventory management experiments. The lines represent the mean optimality gap over the 100 training datasets and the shadowing marks the minimum and maximum values. We used the radius $\epsilon_N =10N^{-1/10}$, and the MP Affine and SP Affine performance is averaged only over the training datasets which produced first-stage solutions which were feasible on the entire testing dataset. The SAA performance is not shown since it never produced a first-stage decisions which was feasible for all points in the testing dataset.}		\label{fig:mp:Lotsizing_OptimalityGap}
\end{figure}
\paragraph{Optimality.}Figure~\ref{fig:mp:Lotsizing_OptimalityGap} presents the optimality gap for the various methods. 

The results for the multi-policy approximation are consistent with the asymptotic optimality guarantee from Section~\ref{sec:mp:asymptotic}, showing that optimality gap of MP Affine decreases are more data is obtained and the gap is nearly zero when $N=512$ for all distributions. 
In contrast, the optimality gap of SP Affine does not improve as more data is obtained, but rather increases as the support is more accurately estimated, and feasibility is required on a larger set. Moreover, the distributionally robust methods, which require feasibility on the entirety of $\Xi$, always produces a first-stage decision with poor average performance.

\begin{figure}[t]
	\centering
	\includegraphics[width=0.8\textwidth]{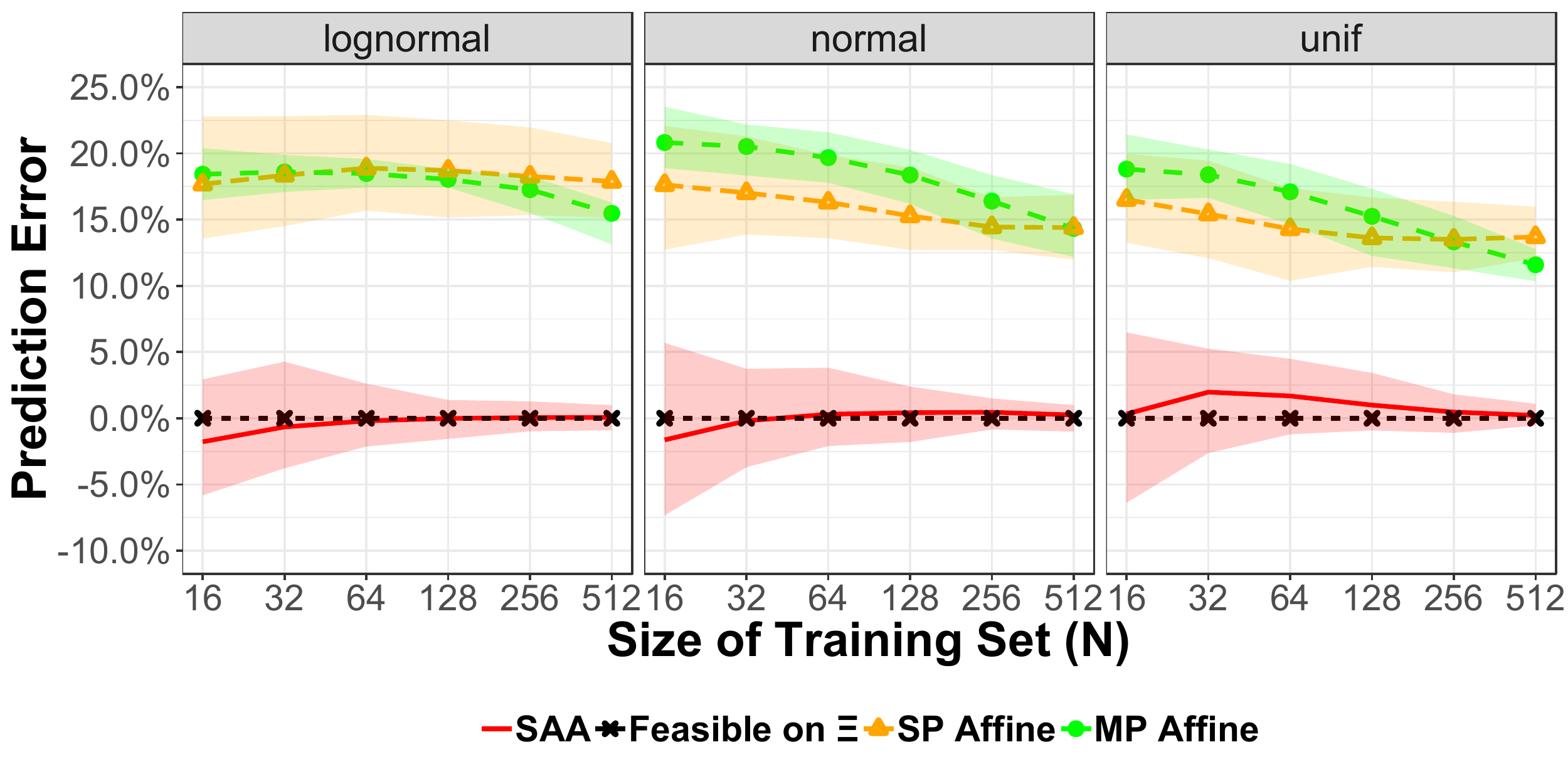}
	\caption{The prediction error for the inventory management experiments. The lines represent the mean prediction error over the 100 test sets and the shadowing mark the min and max values. For each method, the out of sample prediction error was computed only on the feasible realizations in the testing set for each first-stage decision.}
	\label{fig:mp:Lotsizing_OutOfSampleTestRatio} 
\end{figure}
\paragraph{Prediction.} Figure~\ref{fig:mp:Lotsizing_OutOfSampleTestRatio} presents the prediction gap of the various methods. 
We see that MP Affine and SP Affine offer meaningful upper bounds on the average cost of their prescribed first-stage decisions. In contrast, SAA both underestimates and overestimates the true performance, depending on the distribution type and the number of points in the training dataset. Since requiring feasibility on all of $\Xi$ results in a constant first-stage decision and second-stage decision equal to zero, the distributionally robust methods trivially achieve an exact prediction. 

\subsection{Medical Scheduling}
In this section, we perform an experiment which provides numerical evidence that the performance of our approach can match the state-of-the-art of approaches for distributionally robust optimization with the type-1 Wasserstein ambiguity set.

\subsubsection{Problem Description.}  We consider the following medical scheduling problem based on \cite{bertsimas2018adaptive} in which a clinic is tasked with scheduling a physician's daily appointments of $n$ patients. The patients are scheduled to arrive in ascending order, i.e. patient $1$ is scheduled before patient $2$, and so on. The physician works from time $0$ to $T$, and is paid overtime afterwards. The goal is to schedule the appointments such that the total waiting times and overtime costs are minimized.

The first stage decision $\bx \in \R^{n}_+$ is the schedule, where $x_i$ is the appointment length allocated to patient $i$. Thus, the appointment of patient $i+1$ is scheduled to begin at time $\sum_{j=1}^i x_j$. 
All appointments must be scheduled within the physician regular hours, represented by the constraint  
$\sum_{i=1}^{n} x_i\leq T$.
The actual length of the $i$-th patient's appointment is $\xi_i \ge 0$. The second-stage decision $\by \in \R^{n+1}_+$ corresponds to the waiting times; for each $i \in [n]$, $y_i$ is the waiting time for patient $i$, and $y_{n+1}$ is the overtime required by the physician. The first patient will be admitted upon arrival at time $0$, which is enforced by setting $y_1=0$. Given a realization $\bxi$ of appointment lengths, the waiting times are found via the following recursive formula:
\begin{align*}
y_{i+1} = \max\{y_{i} + \xi_{i} - x_{i},0\},
\end{align*}
The physician costs the clinic $c$ per unit of overtime, and each patient costs the clinic $1$ per unit of time spent waiting. Our goal is to find the schedule which minimizes the expected cost:
\begin{equation*}
\begin{aligned}
v^* = \quad &\underset{\bx\in\R^n}{\text{minimize}}&&\Exp[Q(\bx,\bxi)], 
\end{aligned}
\end{equation*}
where
\begin{equation*}
\begin{aligned}
Q(\bx,\bxi) = \quad &\underset{\by\in \R^{n+1}}{\textnormal{minimize}}&& \sum_{i=1}^{n} y_i + c y_{n+1} &&\\
& \text{subject to} && y_{i+1} \ge y_i +{\xi}_i - x_i,&& i=1,\ldots,n,\\
&&&y_i \ge 0,&& i=1,\ldots,n+1,\\
&&&\sum_{i=1}^{n} x_i\leq T&&\\
&&& x_i \ge 0,&& i=1,\ldots,n.
\end{aligned}
\end{equation*}

\subsubsection{Experiments.}
We consider an example with $n=8$ patients and cost parameter $c=2$. For each patient, we generate a mean $\mu_i$ uniformly over $[30,60]$ and generate a standard deviation $\sigma_i$ uniformly over $[0,0.3\mu_i]$. We consider three probability distributions (uniform, normal,  lognormal) with the corresponding mean and standard deviation, and we use rejection sampling to have nonnegative realizations. 
The physician's regular hours are set to $T=\sum_{i=1}^n\mu_i+0.5\norm{\bsigma}_2$.  Since no additional information on appointment lengths is known,  we set $\Xi=\R^{n}_+$ for all methods. For SP Affine, MP Affine, Wass SDP, and Wass SW, 
we use a robustness parameter $\epsilon_N =N^{-1/8}$. 

\subsubsection{Results.} 
\begin{table}[t]
	\centering
	\caption{Running times (in seconds) for the medical scheduling problem. The average (standard deviation) over the $M=100$ training datasets.}\label{tbl:MedicalScheduling_RunningTimes}
	\begin{tabular}{lrrrrrr}
		\hline
		& \multicolumn{6}{c}{Size of training datasets, $N$}\\
		Method & 16 & 32 & 64 & 128 & 256 & 512 \\ 
		\hline
	SAA & 0.00 (0.00) & 0.00 (0.00) & 0.01 (0.00) & 0.01 (0.00) & 0.02 (0.00) & 0.05 (0.00) \\ 
	Approx PCM & 0.02 (0.00) & 0.02 (0.00) & 0.02 (0.00) & 0.02 (0.00) & 0.02 (0.00) & 0.02 (0.00) \\ 
	SP Affine & 0.16 (0.01) & 0.30 (0.02) & 0.69 (0.06) & 1.60 (0.17) & 3.94 (0.41) & 10.15 (1.31) \\ 
	MP Affine & 0.16 (0.01) & 0.33 (0.02) & 0.85 (0.07) & 2.01 (0.18) & 6.49 (0.62) & 9.96 (0.97) \\ 
	Wass SDP & 7.90 (0.79) & 17.32 (1.54) & 34.67 (3.05) & 79.16 (7.34) & 166.96 (14.22) & 323.12 (32.88) \\ 
	Wass SW & 0.09 (0.00) & 0.19 (0.01) & 0.48 (0.03) & 1.34 (0.13) & 3.77 (0.29) & 4.56 (0.35) \\ 
		\hline
	\end{tabular}
	\vspace{-5pt}
\end{table}

\paragraph{{ Tractability.}} 
The running times for each method are shown in Table~\ref{tbl:MedicalScheduling_RunningTimes}. We observe that the multi-policy approximation remains computationally tractable for all values of $N$. The running times of Approx PCM method do not depend on the size $N$ of the training datasets, since this approach works with aggregated data, and the running times of the other methods generally scale linearly in $N$. 

\paragraph{{ Feasibility.}} As this problem has relative complete recourse, the second-stage problem is always feasible for any nonnegative first-stage decisions. 

\begin{figure}[t]
	\centering
	\includegraphics[width=0.85\textwidth]{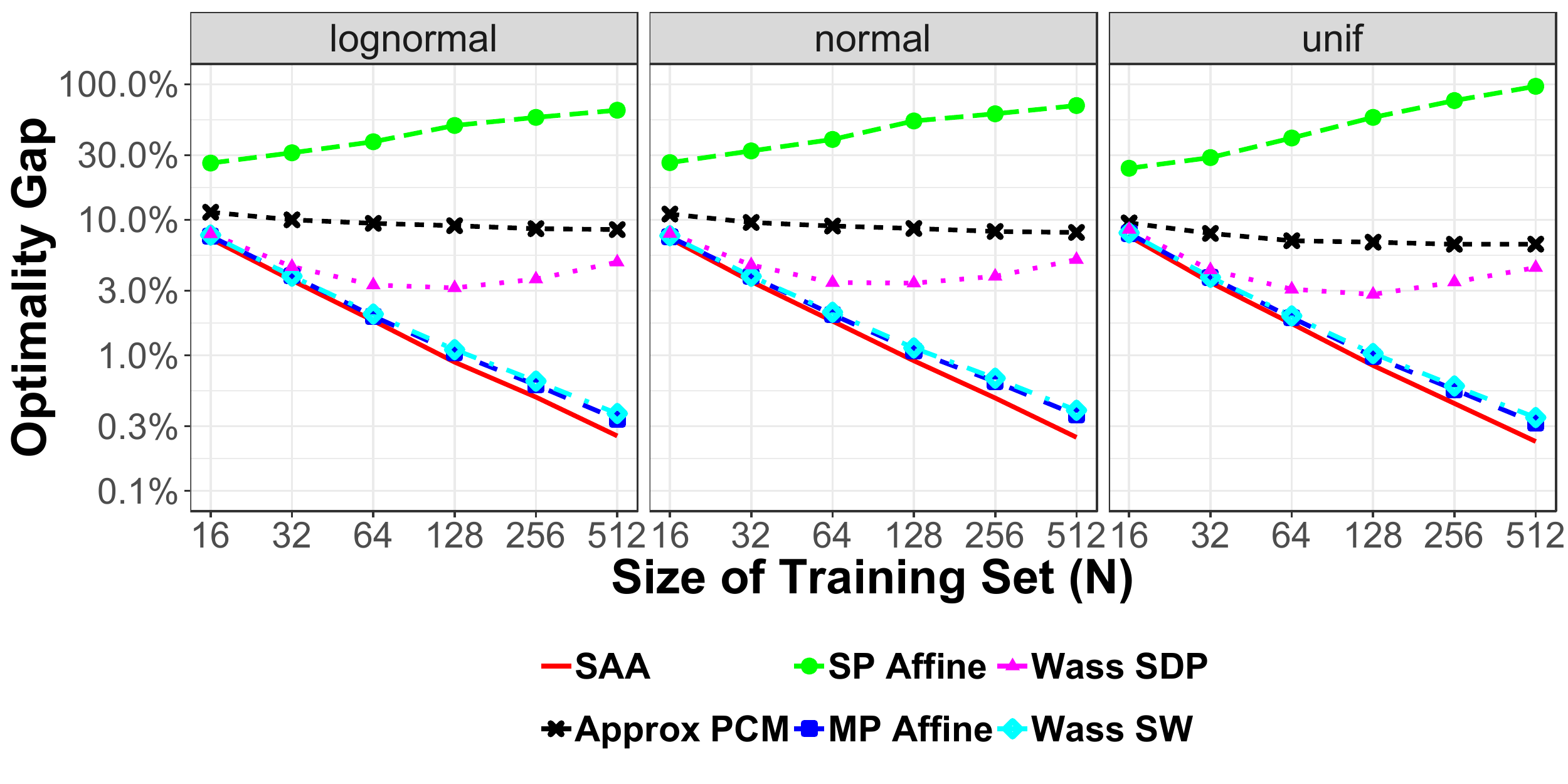}
	\caption{Out-of-sample performance for the medical scheduling problem. Average optimality gap over $M=100$ training datasets of different sizes. Each pane shows the optimality gap when both training and test datasets are generated from a specific distribution: lognormal, normal, or uniform.}\label{fig:mp:MedicalScheduling_OptimalityGap}
\end{figure}
\paragraph{{ Optimality.}}
Figure~\ref{fig:mp:MedicalScheduling_OptimalityGap} presents the optimality gap for the various methods. 
We observe that the optimality gaps of MP Affine, Wass SW, and SAA are nearly identical and converge to zero as $N$ grows larger.

\begin{figure}
	\centering
	\includegraphics[width=0.85\textwidth]{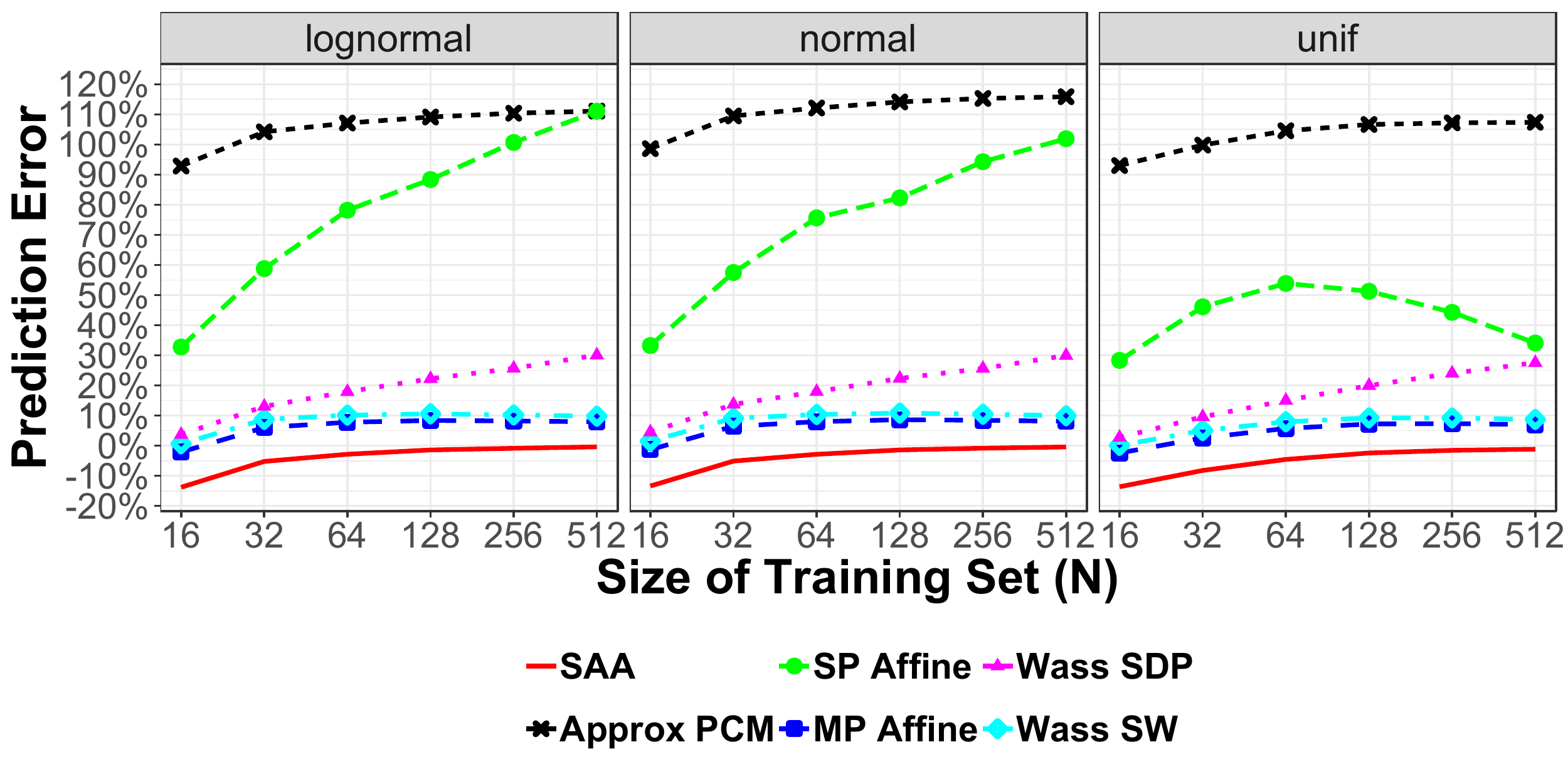}
	\caption{Predictive performance for the medical scheduling problem. Each line shows the mean prediction error over $M=100$ training datasets of different sizes. Each pane shows the optimality gap when both training and test sets were taking from a specific distribution, either lognormal, normal, or uniform.}\label{fig:mp:MedicalScheduling_PredictibilityGap}		
\end{figure}
\paragraph{ Prediction.} Figure~\ref{fig:mp:MedicalScheduling_PredictibilityGap} presents the average prediction error for each method. SAA produced a negative prediction error for between 50\% and 80\% of the training datasets, depending on the value of $N$.  
In contrast,  MP Affine and Wass SW methods produce nearly identical prediction errors, consistently between $5\%-10\%$,  thus producing a reliable upper bound on the true out-of-sample performance.


\section{Conclusion and Extensions} \label{sec:mp:conclusion}
In this paper, we investigated an approximation scheme, based on overlapping decision rules, for addressing data-driven two-stage distributionally robust optimization problems with the type-$\infty$ Wasserstein ambiguity set. The multi-policy approximation is simple, tractable, and provably asymptotically optimal under mild assumptions. 
A natural question is whether the multi-policy approach, or variant thereof, can be leveraged to obtain near-optimal approximations for other robust formulations. We remark that a fundamental aspect of our proof of asymptotic optimality for multi-policy approach in sample robust optimization is that each uncertainty set shrinks in size as more data points are obtained. 
Thus, the local region for which each linear decision rule is optimized becomes smaller over time. For this reason, an extension of our asymptotic optimality guarantees to other distributionally robust optimization settings may rely on this property as well. 


\bibliographystyle{plainnat}
\bibliography{DataDrivenRobustBibliography}

\clearpage


\begin{APPENDICES}
\setlength{\parskip}{1em}


\section{Preliminary Results}\label{appx:polyhedral}
In this section, we present preliminary definitions and results for polyhedral sets, which are utilized in  Appendices~\ref{appx:C_feas} and \ref{appx:shrinkage}. This section is organized as follows: 

\begin{itemize}
\item In Appendix~\ref{appx:poly:general}, we review standard definitions and classic results for polyhedral theory.  
\item In Appendix~\ref{appx:poly:radius_point}, we define the ``radius of a point"  (Definition~\ref{defn:radius_point}) and provide two results about it (Lemmas~\ref{lem:mp:radius_z} and \ref{lemma:coneshrinkege}). 
\item In Appendix~\ref{appx:poly:radius_face}, we define the ``radius of a polyhedron"  (Definition~\ref{defn:radius_face}) and show that it is always strictly positive (Lemma~\ref{lem:mp:radius}). 
\item In Appendix~\ref{appx:poly:angle}, we define the "sine of the angle" (Definition~\ref{defn:maximal_tangent}) between two polyhedral sets.
\end{itemize}

\subsection{Standard definitions and notation}\label{appx:poly:general}

We begin by establishing the notation and basic concepts regarding polyhedral theory which are used in the subsequent appendices. The material in the present section is fairly standard, and we refer the reader to \citet[Section 3]{Conforti2014} for additional details on polyhedra.

We start by recalling that $\Xi \subseteq \R^d$ is a nonempty polyhedron given by
$$\Xi=\left\{\bzeta\in\R^d:\bg_i^\intercal \bzeta\geq g_i^0,\;i\in[\tilde{m}]\right\}.$$
Throughout the appendices we assume, without any loss of generality, that this representation of the polyhedron is \emph{minimal}, meaning that the omission of any of the inequalities would result in a different polyhedron, 
and that $\norm{\bg_i}_*=1$ for each constraint. 

For notational convenience, we define the hyperplane associated with the $i$th constraint of $\Xi$ as 
\begin{equation*}\mathcal{H}_i\triangleq \left\{\bzeta \in \R^d:\bg_i^\intercal \bzeta= g_i^0\right\}.\end{equation*}
Since we have assumed that $\| \bg_i\|_* = 1$, we note that the following holds for all realizations $\bzeta \in \Xi$:
\begin{align}\label{eq:dist_point_hyper}
\dist(\bzeta,\mathcal{H}_i)\triangleq \min_{\bzeta'\in \mathcal{H}_i}\norm{\bzeta'-\bzeta} = \bg_i^\intercal\bzeta-g^0_i.\end{align}  
For any realization $\bzeta \in \Xi$, let its \emph{active index set} with respect to the polyhedron $\Xi$ be defined as
\begin{equation*}I_\Xi(\bzeta)\triangleq \left\{i\in[\tilde{m}]:\bzeta\in \mathcal{H}_i\right\}.\end{equation*}

Our proofs in the appendices, as well as the statement of Lemma~\ref{lem:mp:feas_then_C_feas} in Section~\ref{sec:mp:asymptotic:L_Feasibility}, will involve iterating over the faces of a polyhedron. For an introduction and basic results of faces and facets, see  \citet[Section 3.8]{Conforti2014}. For any polyhedron $T\subseteq \R^d$, we denote its set of faces by $\Face(T)$ and its set of facets by $\Facet(T)$. We denote the dimension of a polyhedron $T \subseteq \R^d$ by $\dim(T) \in \N$ and the dimension of the minimal faces of $T$ as $\dimlower(T) \in \N$. 

Finally, we denote the distance between two polyhedra $T, T'\subseteq \R^d$ by
\begin{align}\label{eq:dist_poly}
\dist(T,T') &\triangleq \inf_{\bz \in T,\bz' \in T'} \norm{\bz-\bz'}= \inf_{\bz \in T}\dist(\bz,T').
\end{align}
Since $T,T' \subseteq \R^d$ are closed and convex sets, we recall that $\dist(T,T') > 0$ if and only if $T \cap T' = \emptyset$ \citep[Theorem 11.4 and Corollary 11.4.1]{rockafellar1970convex}.

\subsection{Radius of a point}\label{appx:poly:radius_point}
We next introduce terminology and basic results regarding the ``radius of a point" with respect to a polyhedral set. 
In essence, the radius captures the distance between a point (which is contained in a polyhedral set) and the hyperplanes which define that polyhedral set, and is rigorously defined as follows.  
\begin{definition} \label{defn:radius_point}
	For any point $\bzeta \in \Xi$,  its {radius} with respect to $\Xi$ is $\Delta_\Xi(\bzeta) \triangleq 
	\min\limits_{i\in[\tilde{m}]\setminus I_\Xi(\bzeta)} \dist(\bzeta,\mathcal{H}_i)$.
\end{definition}
The following result shows that the radius of a point with respect to $\Xi$ is always strictly positive. 
\begin{lemma}\label{lem:mp:radius_z}
	$\Delta_\Xi(\bzeta) > 0$ for all $\bzeta\in \Xi$.
\end{lemma}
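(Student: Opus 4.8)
The plan is to argue directly from the definition of the radius and the fact that the representation of $\Xi$ is minimal. Fix any $\bzeta \in \Xi$. By definition, $\Delta_\Xi(\bzeta) = \min_{i \in [\tilde{m}] \setminus I_\Xi(\bzeta)} \dist(\bzeta, \mathcal{H}_i)$, where $I_\Xi(\bzeta)$ is the active index set at $\bzeta$. If $[\tilde{m}] \setminus I_\Xi(\bzeta) = \emptyset$, then the minimum is over an empty set, which by convention is $+\infty > 0$, so the claim holds trivially. Otherwise, the minimum is taken over finitely many indices, so it suffices to show that $\dist(\bzeta, \mathcal{H}_i) > 0$ for every single index $i \in [\tilde{m}] \setminus I_\Xi(\bzeta)$, i.e., for every inequality constraint that is \emph{not} active at $\bzeta$.

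The key step is the following: for $i \notin I_\Xi(\bzeta)$, by the definition of the active index set we have $\bzeta \notin \mathcal{H}_i$, which means $\bg_i^\intercal \bzeta \neq g_i^0$. Since $\bzeta \in \Xi$ we also have $\bg_i^\intercal \bzeta \ge g_i^0$, hence $\bg_i^\intercal \bzeta > g_i^0$, i.e., the $i$th constraint is strictly satisfied (inactive) at $\bzeta$. Now I invoke the formula \eqref{eq:dist_point_hyper}, which was established under the normalization $\norm{\bg_i}_* = 1$ and states that $\dist(\bzeta, \mathcal{H}_i) = \bg_i^\intercal \bzeta - g_i^0$ for all $\bzeta \in \Xi$. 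Applying this with our particular $\bzeta$ and inactive index $i$ gives $\dist(\bzeta, \mathcal{H}_i) = \bg_i^\intercal \bzeta - g_i^0 > 0$.

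Combining these observations, the minimum $\Delta_\Xi(\bzeta) = \min_{i \in [\tilde{m}] \setminus I_\Xi(\bzeta)} \dist(\bzeta, \mathcal{H}_i)$ is a minimum of finitely many strictly positive quantities (or an empty minimum equal to $+\infty$), and is therefore strictly positive. Since $\bzeta \in \Xi$ was arbitrary, the claim follows.

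I do not anticipate a genuine obstacle here: the result is essentially a restatement of the elementary fact that a point satisfies an inequality constraint either with equality (active) or strictly (inactive, hence at positive distance from the bounding hyperplane). The only subtlety worth stating explicitly is the handling of the edge case where all constraints are active at $\bzeta$ (so the minimum is over the empty set); the convention that an empty infimum equals $+\infty$ disposes of this, and one might alternatively note that $\bzeta$ is then a vertex of a minimal face and the statement is vacuous. No appeal to minimality of the representation is actually needed for this particular lemma — the normalization $\norm{\bg_i}_* = 1$ and equation \eqref{eq:dist_point_hyper} suffice.
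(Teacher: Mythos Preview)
Your proof is correct and follows essentially the same approach as the paper's own proof: split into the case where all constraints are active (empty minimum equals $+\infty$) and the case where some constraint $i$ is inactive, then observe $\bzeta \notin \mathcal{H}_i$ forces $\dist(\bzeta,\mathcal{H}_i)>0$. Your version is slightly more explicit in invoking \eqref{eq:dist_point_hyper} to compute the distance, but the logic is identical.
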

\begin{proof}{Proof.}
	Consider any realization $\bzeta \in \Xi$. If $I_\Xi(\bzeta)=[\tilde{m}]$, then $\Delta_\Xi(\bzeta)=\infty>0$. Otherwise, there exists a constraint $i\in[\tilde{m}]\setminus I_\Xi(\bzeta)$ such that $\Delta_\Xi(\bzeta) = \dist(\bzeta, \mathcal{H}_i)$. 
	Since $\bzeta \notin \mathcal{H}_i$, it follows that $\dist(\bzeta,\mathcal{H}_i)>0$. In all cases, we have shown that $\Delta_\Xi(\bzeta)>0$. \Halmos
\end{proof}
The final result in this section presents a key property of the radius of a point, which will be used in the proofs presented in both Appendices~\ref{appx:C_feas} and \ref{appx:shrinkage}. Specifically, the following Lemma~\ref{lemma:coneshrinkege}, illustrated in Figure~\ref{fig:mp:same_shape}, shows that the shape of a ball around $\hat{\bzeta} \in \Xi$ which is intersected with the polyhedron $\Xi$ is the same for all balls around $\hat{\bzeta}$ with radius less than or equal to $\Delta_\Xi(\hat{\bzeta})$.

\begin{figure}[t]
	\centering
	\includegraphics{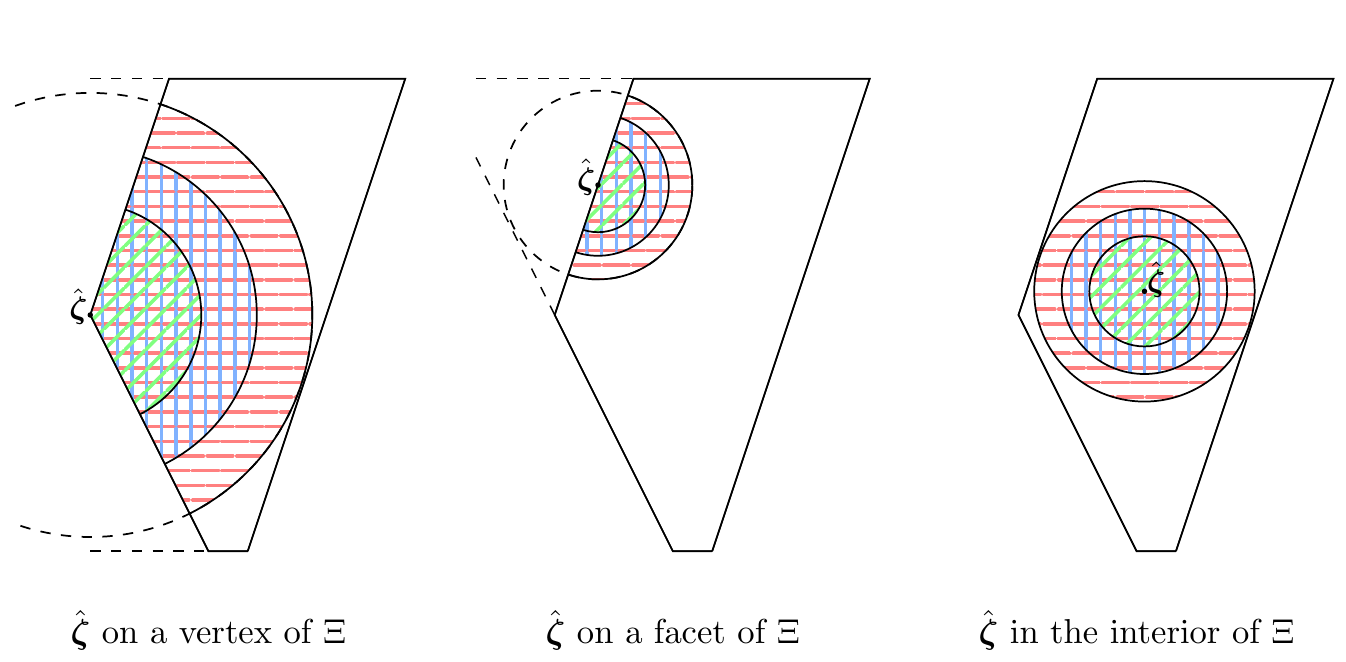}
	\caption{
		For each of the three figures, the regions filled with horizontal (red), vertical (blue), and diagonal (green) lines represent $B(\hat{\bzeta},\lambda\Delta_\Xi(\hat{\bzeta})) \cap \Xi$ for $\lambda=1$, $0.75$, and $0.5$, respectively. Since the shape of the sets do not change for each value of $\lambda$, we  observe that
		$ \left\{\lambda \bzeta + (1-\lambda) \hat{\bzeta}: \bzeta\in  B(\hat{\bzeta}{}^i,\Delta_\Xi(\hat{\bzeta}))\cap\Xi\right\}=B(\hat{\bzeta},\lambda\Delta_\Xi(\hat{\bzeta})) \cap \Xi$. 
	}	
	\label{fig:mp:same_shape}
\end{figure}
%

\begin{lemma}\label{lemma:coneshrinkege}
	Let $\hat{\bzeta}\in \Xi$. Then  for all $\epsilon\in (0,\Delta_\Xi(\hat{\bzeta})]$ and $\lambda\in (0,1]$,
	$$B({\hat{\bzeta}},\lambda \epsilon)\cap \Xi= \left\{\lambda\bzeta+(1-\lambda)\hat{\bzeta}:\bzeta\in B({\hat{\bzeta}},{\epsilon})\cap \Xi \right\}.$$
	
\end{lemma}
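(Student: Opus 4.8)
\textbf{Proof plan for Lemma~\ref{lemma:coneshrinkege}.}
The plan is to prove the set equality by establishing both inclusions, exploiting the fact that near $\hat{\bzeta}$ the polyhedron $\Xi$ looks locally like the cone of its active constraints at $\hat{\bzeta}$, and that the inactive constraints stay slack within a ball of radius $\Delta_\Xi(\hat{\bzeta})$. First I would fix $\hat{\bzeta}\in\Xi$, $\epsilon\in(0,\Delta_\Xi(\hat{\bzeta})]$, and $\lambda\in(0,1]$. The key structural observation is that for any $\bzeta$ with $\|\bzeta-\hat{\bzeta}\|\le\epsilon\le\Delta_\Xi(\hat{\bzeta})$, every constraint $i\in[\tilde m]\setminus I_\Xi(\hat{\bzeta})$ is automatically satisfied: since $\dist(\hat{\bzeta},\mathcal H_i)\ge\Delta_\Xi(\hat{\bzeta})\ge\epsilon$ and $\|\bg_i\|_*=1$, we get $\bg_i^\intercal\bzeta - g_i^0 = \bg_i^\intercal\hat{\bzeta}-g_i^0 + \bg_i^\intercal(\bzeta-\hat{\bzeta}) \ge \Delta_\Xi(\hat{\bzeta}) - \|\bzeta-\hat{\bzeta}\| \ge 0$ by the generalized Cauchy–Schwarz inequality for dual norms. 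Hence, within the ball $B(\hat{\bzeta},\epsilon)$, membership in $\Xi$ is equivalent to satisfying only the active constraints $\{\bg_i^\intercal\bzeta\ge g_i^0 : i\in I_\Xi(\hat{\bzeta})\}$, and for these $\bg_i^\intercal\hat{\bzeta}=g_i^0$, so the condition reduces to $\bg_i^\intercal(\bzeta-\hat{\bzeta})\ge 0$ for $i\in I_\Xi(\hat{\bzeta})$.

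For the inclusion $\supseteq$: take $\bzeta\in B(\hat{\bzeta},\epsilon)\cap\Xi$ and set $\bzeta' = \lambda\bzeta+(1-\lambda)\hat{\bzeta}$. Then $\bzeta'-\hat{\bzeta} = \lambda(\bzeta-\hat{\bzeta})$, so $\|\bzeta'-\hat{\bzeta}\| = \lambda\|\bzeta-\hat{\bzeta}\|\le\lambda\epsilon$, giving $\bzeta'\in B(\hat{\bzeta},\lambda\epsilon)$. For feasibility in $\Xi$: the inactive constraints hold by the observation above (since $\|\bzeta'-\hat{\bzeta}\|\le\lambda\epsilon\le\epsilon\le\Delta_\Xi(\hat{\bzeta})$), and for active $i$ we have $\bg_i^\intercal(\bzeta'-\hat{\bzeta}) = \lambda\,\bg_i^\intercal(\bzeta-\hat{\bzeta}) \ge 0$ since $\lambda>0$ and $\bzeta\in\Xi$. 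Thus $\bzeta'\in B(\hat{\bzeta},\lambda\epsilon)\cap\Xi$. For the reverse inclusion $\subseteq$: take $\bzeta'\in B(\hat{\bzeta},\lambda\epsilon)\cap\Xi$ and define $\bzeta = \hat{\bzeta} + \tfrac{1}{\lambda}(\bzeta'-\hat{\bzeta})$, which is the unique preimage under the affine map $\bzeta\mapsto\lambda\bzeta+(1-\lambda)\hat{\bzeta}$. Then $\|\bzeta-\hat{\bzeta}\| = \tfrac{1}{\lambda}\|\bzeta'-\hat{\bzeta}\| \le \epsilon$, so $\bzeta\in B(\hat{\bzeta},\epsilon)$; and for active $i$, $\bg_i^\intercal(\bzeta-\hat{\bzeta}) = \tfrac{1}{\lambda}\bg_i^\intercal(\bzeta'-\hat{\bzeta})\ge 0$, while the inactive constraints hold again by the slackness observation applied to $\bzeta\in B(\hat{\bzeta},\epsilon)$. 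Hence $\bzeta\in B(\hat{\bzeta},\epsilon)\cap\Xi$ and $\bzeta' = \lambda\bzeta+(1-\lambda)\hat{\bzeta}$ lies in the right-hand set, completing the equality.

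I do not anticipate a serious obstacle here; the lemma is essentially a localization statement. The one point requiring a little care is the treatment of the inactive constraints — making sure the radius $\Delta_\Xi(\hat{\bzeta})$ is exactly the quantity that guarantees those constraints remain slack throughout a ball of that radius, which is immediate from Definition~\ref{defn:radius_point} and equation~\eqref{eq:dist_point_hyper}. (If $I_\Xi(\hat{\bzeta})=[\tilde m]$, there are no inactive constraints and $\Delta_\Xi(\hat{\bzeta})=\infty$, so the argument goes through with the set of active-constraint conditions being the only restriction, and $\epsilon$ arbitrary; the same computations apply verbatim.) The second mild point is invertibility of the contraction map $\bzeta\mapsto\lambda\bzeta+(1-\lambda)\hat{\bzeta}$ for $\lambda\in(0,1]$, which is clear since it is affine with invertible linear part $\lambda\bbi$.
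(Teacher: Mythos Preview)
Your proof is correct and follows essentially the same approach as the paper: both establish the two inclusions by defining the same affine map and its inverse, then verify membership in $\Xi$ by splitting constraints into active and inactive at $\hat{\bzeta}$, using the definition of $\Delta_\Xi(\hat{\bzeta})$ for the inactive ones. The only cosmetic difference is that the paper dispatches the $\supseteq$ direction more quickly by invoking convexity of $\Xi$ (since $\bzeta,\hat{\bzeta}\in\Xi$ implies $\lambda\bzeta+(1-\lambda)\hat{\bzeta}\in\Xi$) rather than re-checking each constraint, whereas you apply the active/inactive decomposition uniformly in both directions.
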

\begin{proof}{Proof.}
	Let $\hat{\bzeta} \in \Xi$, and consider any $\epsilon \in (0,\Delta_\Xi(\hat{\bzeta})]$ and $\lambda \in (0,1]$ (note that the existence of such an $\epsilon$ follows from Lemma~\ref{lem:mp:radius_z}). 
	First, choose any arbitrary
	\begin{align*} 
	\tilde{\bzeta}\in \left\{\lambda\bzeta+(1-\lambda)\hat{\bzeta}:\bzeta\in B({\hat{\bzeta}},{\epsilon})\cap \Xi \right\}.
	\end{align*}
	Letting $\bzeta\triangleq \frac{1}{\lambda} \tilde{\bzeta}-\frac{1-\lambda}{\lambda} \hat{\bzeta}$ and observing that $\bzeta \in B({\hat{\bzeta}},{\epsilon})$, it follows that 
	$\tilde{\bzeta}\in B({\hat{\bzeta}},\lambda{\epsilon})$. Moreover, since $\Xi$ is a convex set and $\bzeta,\hat{\bzeta}\in \Xi$, we also observe that $\tilde{\bzeta}\in\Xi$. Since $\tilde{\bzeta}$ was chosen arbitrarily, we have shown that $$B({\hat{\bzeta}},\lambda \epsilon)\cap \Xi \supseteq \left\{\lambda\bzeta+(1-\lambda)\hat{\bzeta}:\bzeta\in B({\hat{\bzeta}},{\epsilon})\cap \Xi \right\}.$$
	
	It remains to prove the other direction. Indeed, choose any arbitrary
	\begin{align} 
	\tilde{\bzeta}\in B(\hat{\bzeta}, \lambda\epsilon)\cap \Xi \label{line:mp:defn_tilde_bzeta}
	\end{align}
	and define
	$$\bzeta \triangleq \frac{1}{\lambda}\tilde{\bzeta}+\left(1-\frac{1}{\lambda} \right)\hat{\bzeta}.$$
	
	First, we observe that
	$$\norm{\bzeta-\hat{\bzeta}}= \norm{\frac{1}{\lambda}\tilde{\bzeta}+\left(1-\frac{1}{\lambda} \right)\hat{\bzeta}-\hat{\bzeta}}=\frac{1}{\lambda}\norm{\tilde{\bzeta}-\hat{\bzeta}}\leq {\epsilon},$$
	where the inequality follows from \eqref{line:mp:defn_tilde_bzeta}. 
	Therefore, we have shown that
	\begin{align}
	\bzeta \in B(\hat{\bzeta},{\epsilon}). \label{line:mp:xi_in_ball}
	\end{align}
	
	Second, consider any $i\in[\tilde{m}]\setminus I_\Xi(\hat{\bzeta})$. Then,
	\begin{align}
	\bg_i^\intercal\bzeta-g^0_i &\ge \min_{\bzeta \in B(\hat{\bzeta},{\epsilon})} \left \{\bg_i^\intercal\bzeta-g^0_i \right \} \notag \\
	&= -{\epsilon}\norm{\bg_i}_*+g^0_i - \bg_i^\intercal \hat{\bzeta} \notag \\
	&= -{\epsilon} + \dist(\hat{\bzeta},\mathcal{H}_i).\label{line:mp:hyperlane_part_one}
	\end{align}
	Indeed, the inequality follows from \eqref{line:mp:xi_in_ball}, the first equality follows from the definition of the dual norm, and the final equality follows from a combination of $\|\bg_i\|_* = 1$, $\hat{\bzeta} \in \Xi$, and \eqref{eq:dist_point_hyper}. 
	Furthermore, 
	\begin{equation}
	\dist(\hat{\bzeta}, \mathcal{H}_i)\geq \Delta_\Xi(\hat{\bzeta})\geq\epsilon, \label{line:mp:hyperlane_part_two}
	\end{equation} 
	where the first inequality follows from Definition~\ref{defn:radius_point}, and the second inequality follows because $\epsilon \in (0, \Delta_\Xi(\hat{\bzeta})]$. 
	Combining \eqref{line:mp:hyperlane_part_one} and \eqref{line:mp:hyperlane_part_two}, we have thus shown that
	\begin{align}
	\bg_i^\intercal \bzeta \ge g^0_i, \quad \forall i\in[\tilde{m}]\setminus I_\Xi(\hat{\bzeta}).\label{line:mp:xi_feas_part_one}
	\end{align}
	
	Third, consider any constraint $i  \in I_\Xi(\hat{\bzeta})$. Then,
	\begin{align}
	\bg_i^\intercal\bzeta &=\left(1-\frac{1}{\lambda}\right)\bg^\intercal_i\hat{\bzeta}+\frac{1}{\lambda}\bg_i^\intercal\tilde{\bzeta} \notag \\
	&= \left(1-\frac{1}{\lambda}\right)g_i^0+\frac{1}{\lambda}\bg_i^\intercal\tilde{\bzeta} \notag \\
	&\ge  \left(1-\frac{1}{\lambda}\right)g_i^0+\frac{1}{\lambda}g_i^0 \notag \\
	&= g_i^0. \label{line:mp:xi_feas_part_two}
	\end{align}
	Indeed, the first equality follows from the definition of $\bzeta$, the second equality follows because $i \in I_\Xi(\hat{\bzeta})$, and the inequality follows because $\tilde{\bzeta} \in \Xi$ and $\lambda > 0$.
	
	Combining \eqref{line:mp:xi_in_ball}, \eqref{line:mp:xi_feas_part_one}, and \eqref{line:mp:xi_feas_part_two}, we have shown that $\bzeta \in B(\hat{\bzeta},\epsilon) \cap \Xi$.
	In other words, we have shown that there exists a realization $\bzeta \in B(\hat{\bzeta},\epsilon) \cap \Xi$ such that
	\begin{align*}
	\tilde{\bzeta} = \lambda \bzeta + (1-\lambda) \hat{\bzeta},
	\end{align*}
	and thus 
	\begin{align*}
	\tilde{\bzeta}  \in  
	\left\{\lambda \bzeta + (1-\lambda) \hat{\bzeta}: \bzeta\in  B(\hat{\bzeta},\epsilon)\cap\Xi\right\}. 
	\end{align*}
	Since $\tilde{\bzeta} \in B(\hat{\bzeta}, \lambda\epsilon)\cap \Xi$ was chosen arbitrarily, our proof is complete. 
	\Halmos
\end{proof}

\vspace{1em}

\subsection{Radius of a polyhedron} \label{appx:poly:radius_face}
In the previous section, we introduced the ``radius of a point" (Definition~\ref{defn:radius_point}), which measured the distance from a point to the defining hyperplanes of a polyhedron. In this section, we develop an extension of this definition, which we refer to as the ``radius of a polyhedron". 
In essence, for a given polyhedron $T \subseteq \R^d$ which is a subset of the polyhedron $\Xi \subseteq \R^d$, the radius $\Delta_\Xi(T)$ of the polyhedron $T$ captures the minimal distance that any  $F \in \Face(T)$ can be moved before encountering one of the defining hyperplanes of $\Xi$ which $F$ does not initially intersect. A visualization of this definition is found in Figure~\ref{fig:mp:radius_polyhedron}, and the precise definition is given below.

\begin{figure}[t]
	\centering
	\includegraphics{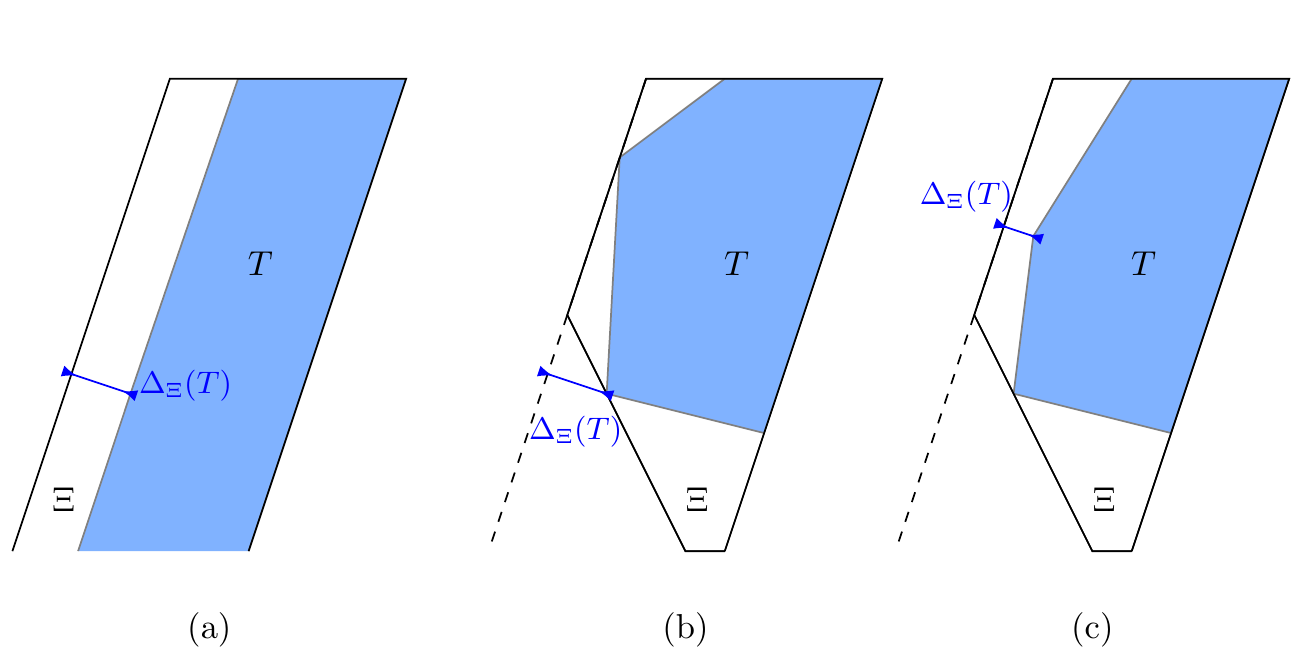}
	\caption{Each of the three figures shows $\Delta_\Xi(T)$ for polyhedra $T \subseteq \Xi \subseteq \R^2$.
	}	
	\label{fig:mp:radius_polyhedron}
\end{figure}
\begin{definition} \label{defn:radius_face}
	For any polyhedron $T\subseteq\Xi$, its radius with respect to $\Xi$ is
	$\Delta_\Xi(T) \triangleq \min\left\{\rho_1(T),\rho_2(T)\right\}$, 
	where
	$$\rho_1(T) \triangleq \min\limits_{i\in[\tilde{m}]:\mathcal{H}_i\cap T=\emptyset} \dist(T,\mathcal{H}_i), \text{ and } \rho_2(T) \triangleq \min\limits_{F\in \Facet(T)} \Delta_\Xi(F).$$
\end{definition}
\vspace{1em}
Similar to the radius of a point, we now show that the radius of a polyhedron is strictly positive.
\begin{lemma}\label{lem:mp:radius}
	If $T\subseteq \Xi$ is a nonempty polyhedron, then $\Delta_\Xi(T) > 0$.
\end{lemma}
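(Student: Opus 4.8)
The plan is to induct on the dimension $\dim(T)$ of the polyhedron. This is the natural vehicle because Definition~\ref{defn:radius_face} writes $\Delta_\Xi(T)$ as the minimum of $\rho_1(T)$, a quantity involving only $T$ and the hyperplanes $\mathcal{H}_i$, and $\rho_2(T)$, which refers recursively to $\Delta_\Xi(F)$ for the facets $F$ of $T$ --- and a facet of a polyhedron has dimension exactly one less than the polyhedron, so the recursion terminates and an induction on $\dim(T)$ will close it.

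For the base case I would take $\dim(T)=0$, i.e. $T=\{\hat{\bzeta}\}$ for some $\hat{\bzeta}\in\Xi$. A singleton has no facets, so $\rho_2(T)=+\infty$ under the convention that the minimum over the empty set is $+\infty$. For $\rho_1(T)$, observe that $\mathcal{H}_i\cap\{\hat{\bzeta}\}=\emptyset$ precisely when $i\notin I_\Xi(\hat{\bzeta})$, and that $\dist(\{\hat{\bzeta}\},\mathcal{H}_i)=\dist(\hat{\bzeta},\mathcal{H}_i)$; hence $\rho_1(T)=\min_{i\in[\tilde{m}]\setminus I_\Xi(\hat{\bzeta})}\dist(\hat{\bzeta},\mathcal{H}_i)=\Delta_\Xi(\hat{\bzeta})$, which is strictly positive by Lemma~\ref{lem:mp:radius_z}. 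Therefore $\Delta_\Xi(T)=\min\{\rho_1(T),\rho_2(T)\}>0$.

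For the inductive step, assume the claim for every nonempty polyhedron contained in $\Xi$ of dimension strictly below $k$, and let $\dim(T)=k\ge 1$; I would bound $\rho_1(T)$ and $\rho_2(T)$ separately. The index set $\{i\in[\tilde{m}]:\mathcal{H}_i\cap T=\emptyset\}$ is finite; if it is empty then $\rho_1(T)=+\infty$, and otherwise, for each such $i$, $T$ and $\mathcal{H}_i$ are disjoint nonempty polyhedra --- hence disjoint closed convex sets --- so $\dist(T,\mathcal{H}_i)>0$ by the fact recalled in Appendix~\ref{appx:poly:general} (\citet[Theorem 11.4 and Corollary 11.4.1]{rockafellar1970convex}); a finite minimum of strictly positive numbers is strictly positive, so $\rho_1(T)>0$. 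Likewise $\Facet(T)$ is finite; if it is empty --- which happens exactly when $T$ coincides with its unique minimal face, i.e. $T$ is an affine subspace --- then $\rho_2(T)=+\infty$, and otherwise each $F\in\Facet(T)$ is a nonempty polyhedron with $F\subseteq T\subseteq\Xi$ and $\dim(F)=k-1<k$, so $\Delta_\Xi(F)>0$ by the inductive hypothesis, whence $\rho_2(T)>0$. Combining, $\Delta_\Xi(T)=\min\{\rho_1(T),\rho_2(T)\}>0$, which completes the induction.

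The one genuinely delicate ingredient is the assertion that two disjoint nonempty polyhedra lie at strictly positive distance; this fails for arbitrary disjoint closed convex sets, so the argument does rely on the polyhedrality of $T$ and of each $\mathcal{H}_i$, exactly as captured by the statement recorded in Appendix~\ref{appx:poly:general}. The remaining facts I would invoke --- that a polyhedron has finitely many faces, that each face of a polyhedron is a polyhedron, and that a facet has dimension one less than its polyhedron --- are standard (\citet[Section 3.8]{Conforti2014}), so beyond that point the proof is pure bookkeeping.
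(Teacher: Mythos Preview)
Your proof is correct and follows essentially the same approach as the paper: both argue that $\rho_1(T)>0$ directly from the positive-distance property of disjoint polyhedra and then handle $\rho_2(T)$ by induction on dimension via the facets. The only cosmetic difference is that the paper first establishes $\rho_1(F)>0$ for every face $F$ and then runs the induction solely for $\rho_2$, starting from the minimal faces of $T$, whereas you fold both pieces into a single induction on $\dim(T)$ with base case $\dim(T)=0$; your explicit handling of the case $\Facet(T)=\emptyset$ (affine subspaces) is exactly what lets your dimension-zero base case suffice.
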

\begin{proof}{Proof.}
	Our proof consists of two parts. First, we show that 
	\begin{align}
	\rho_1(T)>0. \label{line:mp:rho_one_always_positive}
	\end{align}
	Indeed, if $T\cap \mathcal{H}_i \neq \emptyset$ for each constraint $i \in [\tilde{m}]$, then $\rho_1(T) = \infty$. Otherwise, for all constraints $i\in [\tilde{m}]$ which satisfy $T\cap \mathcal{H}_i=\emptyset$, we observe that $\dist(T,\mathcal{H}_i)>0$, which implies that $\rho_1(T)$ is a minimization over a finite set of positive numbers. Thus, in all cases, we have shown that \eqref{line:mp:rho_one_always_positive} holds. 
	
	Second, we show that $\rho_2(T)>0$. By the recursion, this is equivalent to showing that $\Delta_\Xi(F)>0$ for any nonempty $F \in \Face(T)$. However, since any nonempty $F \in \Face(T)$ is a polyhedron contained in $\Xi$, the first part of the proof implies that $\rho_1(F)>0$, thus, it is left to show that
	\begin{align}
	\rho_2(T)>0, \quad \forall F \in \Face(T). \label{line:mp:rho_two_always_positive}
	\end{align}
	Our proof of \eqref{line:mp:rho_two_always_positive} follows from induction: 
	\begin{itemize}
		\item {\bf Base case:} Consider any $F \in \Face(T)$ which is a minimal face of $T$ (or equivalently, satisfies $\dim(F) = \dimlower(T)$). It follows from the definition of a minimal face that $\Facet(F)=\emptyset$.
		Therefore, it follows that 
		$\rho_2(F)=\infty $
		for all minimal faces of $T$.
		
		\item {\bf Induction step:} Fix any dimension $k \in \{\dimlower(T),\ldots,\dim(T)-1\}$, and assume that $\rho_2(F')>0$ for all $F'\in\Face(T)$ with $\dim(F') \le k$. 
		
		Consider any $F \in \Face(T)$ with $\dim(F) = k+1$. 
		Since $F$ is not a minimal face, it has a finite and nonzero number of facets. Thus, there exists a $F' \in \Facet(F)$ such that $$\rho_2(F)= \Delta_\Xi(F') =\min \{ \rho_1(F'), \rho_2(F') \}.$$ We have  shown in line~\eqref{line:mp:rho_one_always_positive} that $\rho_1(F') > 0$. Moreover, since $\dim(F')=k$, it follows from the induction hypothesis that $\rho_2(F')>0$. Therefore, we have shown that $\rho_2(F)>0$. 
	\end{itemize}
	We have thus shown that $\rho_1(T) > 0$ and $\rho_2(T) > 0$, which concludes the proof.
	\Halmos
\end{proof}
\subsection{Angle between two polyhedra} \label{appx:poly:angle}
We conclude Appendix~\ref{appx:polyhedral} by developing terminology to capture the ``sine of the angle'' between two polyhedral sets. This quantity, visualized in Figure~\ref{fig:mp:angle_polyhedron}, is defined as follows:
\begin{definition} \label{defn:maximal_tangent}
	The sine of the angle between a nonempty polyhedron $T \subseteq \Xi$ and $\Xi \subseteq \R^d$ is given by $1/\theta^T_\Xi$, where
	\begin{align*} 
	\theta^T_\Xi &\triangleq \max \left \{1, \max_{\substack{F \in \Face(T), \; i \in [\tilde{m}]: \\ F \cap \mathcal{H}_i\neq \emptyset}} \theta_i(F) \right \}, 
	\end{align*}
	and 
	\begin{align*}
	\theta_i(F) \triangleq \inf \left \{ \theta \ge 0: \dist(\bzeta,F\cap \mathcal{H}_i)  \le \theta \dist(\bzeta,\mathcal{H}_i),\; \forall \bzeta \in F \right \}.
	\end{align*}
\end{definition}
\begin{figure}[t]
	\centering
	\includegraphics{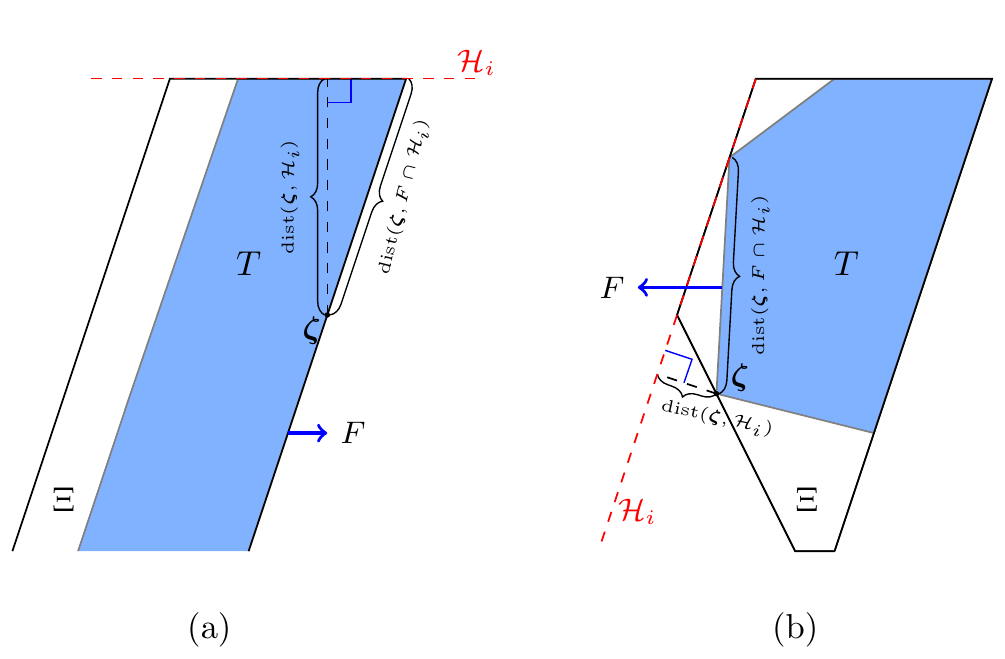}
	\caption{Each of the two figures shows $\theta_\Xi^T$, \ie the sign of the angle between $T$ and $\Xi$, for a polyhedron $T \subseteq \Xi \subseteq \R^2$. Specifically, it shows the point $\bzeta$, a face $F$ of $T$, and a Hyperplane $\mathcal{H}_i$ defining a constraint of $\Xi$ for which $\theta_\Xi^T=\theta_i(F)=\frac{\dist(\bzeta,F\cap\mathcal{H}_i)}{\bzeta,\mathcal{H}_i)}$. As we can see, the value of $\theta_\Xi^T$ is related to sine of the minimal angle between the polyhedral sets.
	}	
	\label{fig:mp:angle_polyhedron}
\end{figure}

To prove that sine is always strictly positive, or equivalently that $\theta^T_\Xi$ is always finite, we require the following well known result which connects the distance of a point from a face of a polyhedral set to the distance from the affine subspace defining that face. A simple proof of this lemma can be found in \citep[Pages 299-301]{guler2010foundations}.
\begin{lemma}[\citet{hoffman2003approximate}]\label{lemma:Hoffman}
	Let $S\subseteq \R^d$ be a polyhedron and let $L \subseteq \R^d$ be an affine subspace. 
	If $S\cap L\neq \emptyset$, then there exists $\theta \ge 0$ such that
	$$\dist{(\bz,S\cap L)}\leq \theta \dist(\bz, L), \quad \forall \bz \in S.$$
\end{lemma}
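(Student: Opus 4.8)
My plan is to derive this relative error bound from the classical Hoffman error bound for finite systems of linear inequalities (Hoffman's original result): if $P=\{\bx\in\R^d:\bba\bx\le\bb\}$ is nonempty, there is a constant $\theta_0\ge0$ depending only on $\bba$ such that $\dist(\bx,P)\le\theta_0\norm{(\bba\bx-\bb)_+}$ for every $\bx\in\R^d$, where $\bv_+\triangleq\max\{\bv,\bzero\}$ denotes the componentwise positive part. The reduction of the stated lemma to this fact is elementary; the real content lies in the classical bound, and reproving that is what I expect to be the main obstacle.

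For the reduction, fix linear representations $S=\{\bx\in\R^d:\bba\bx\le\bb\}$ and $L=\{\bx\in\R^d:\bbc\bx=\bd\}$, so that $S\cap L=\{\bx\in\R^d:\bba\bx\le\bb,\ \bbc\bx\le\bd,\ -\bbc\bx\le-\bd\}$ is a polyhedron, nonempty by hypothesis. Applying the classical bound to this representation produces a constant $\theta_0\ge0$, depending only on $\bba$ and $\bbc$, such that $\dist(\bz,S\cap L)$ is at most $\theta_0$ times the norm of the stacked residual vector with blocks $(\bba\bz-\bb)_+$, $(\bbc\bz-\bd)_+$ and $(\bd-\bbc\bz)_+$, for every $\bz\in\R^d$. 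Restrict now to $\bz\in S$: then $\bba\bz\le\bb$, so the first block is $\bzero$, and for each coordinate $j$ exactly one of the $j$-th entries of $(\bbc\bz-\bd)_+$ and $(\bd-\bbc\bz)_+$ is nonzero, equal to $|(\bbc\bz-\bd)_j|$, so the stacked residual vector has norm $\norm{\bbc\bz-\bd}$. Finally, if $\bw\in L$ attains $\dist(\bz,L)$ then $\bbc\bz-\bd=\bbc(\bz-\bw)$, whence $\norm{\bbc\bz-\bd}\le\kappa\,\dist(\bz,L)$ with $\kappa$ the operator norm of $\bbc$ in the relevant norms (one has $\kappa=1$ when $L$ is represented with $\bbc$ having orthonormal rows and the norm is Euclidean). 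Combining the three estimates,
\begin{equation*}
\dist(\bz,S\cap L)\ \le\ \theta_0\,\norm{\bbc\bz-\bd}\ \le\ \theta_0\,\kappa\,\dist(\bz,L)\qquad\text{for all }\bz\in S,
\end{equation*}
so the lemma holds with $\theta\triangleq\theta_0\kappa$.

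For the classical bound itself, the self-contained route (essentially that of \citet[Pages 299--301]{guler2010foundations}) is a finiteness-plus-compactness argument: the nearest point of $P$ to $\bx$ lies in one of the finitely many faces of $P$, on each of which it is determined by a fixed active subsystem of $\bba\bx\le\bb$; one then maximizes the resulting local Lipschitz constants over these finitely many subsystems, discarding inconsistent ones using $P\neq\emptyset$. The delicate point is controlling the ratio $\dist(\bx,P)/\norm{(\bba\bx-\bb)_+}$ along unbounded directions: if $\bx$ escapes to infinity along a recession direction of the polyhedron cut out by some active subsystem, then either the residual grows at a proportional rate or that direction is also a recession direction of $P$ (so $\dist(\bx,P)$ stays bounded), and in both cases the ratio stays bounded. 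The analogous care, phrased directly in terms of the lemma, enters through the identity $\rec(S\cap L)=\rec(S)\cap\rec(L)$, valid since $S\cap L\neq\emptyset$. An alternative proof is a short induction on $d$ using the face decomposition of $S$: on faces meeting $L$ one applies the inductive hypothesis in lower dimension, and on any face $F$ with $F\cap L=\emptyset$ one uses $\dist(\bz,L)\ge\dist(F,L)>0$ (positivity of the distance between disjoint polyhedra) together with the same recession-cone identity to bound the ratio uniformly.
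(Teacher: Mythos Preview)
Your reduction to the classical Hoffman error bound is correct, and the sketch of the classical bound itself is sound. The paper does not actually give a proof of this lemma: it states the result as well known and simply cites \citet[Pages 299--301]{guler2010foundations}, which is precisely the reference whose argument you outline, so your proposal is fully aligned with (and in fact more detailed than) what the paper provides.
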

We are now ready to show that the finiteness of $\theta^T_\Xi$.
\begin{lemma} \label{lem:mp:angle}
	If $T \subseteq \Xi$ is a nonempty polyhedron, then $1 \le \theta_\Xi^T < \infty$. 
\end{lemma}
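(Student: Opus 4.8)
The plan is to split the argument into the two inequalities, which are of very different natures. The lower bound $\theta_\Xi^T \ge 1$ requires no work: by Definition~\ref{defn:maximal_tangent}, $\theta_\Xi^T$ is defined as the maximum of a collection of quantities which explicitly includes the number $1$, so the inequality is immediate from that definition.

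For the upper bound $\theta_\Xi^T < \infty$, I would first reduce the claim to a statement about a single face. Since $T$ is a polyhedron, its set of faces $\Face(T)$ is finite, and the index set $[\tilde m]$ is finite as well; hence $\theta_\Xi^T$ is the maximum of $1$ together with the finitely many numbers $\theta_i(F)$ ranging over pairs $F \in \Face(T)$, $i \in [\tilde m]$ with $F \cap \mathcal{H}_i \neq \emptyset$. A finite maximum of finite quantities is finite, so it suffices to prove $\theta_i(F) < \infty$ for each such pair.

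Fixing an admissible pair $(F,i)$, I would invoke Hoffman's lemma (Lemma~\ref{lemma:Hoffman}) with $S = F$ and $L = \mathcal{H}_i$. Here $F$, being a face of the polyhedron $T$, is itself a polyhedron, and $\mathcal{H}_i$ is an affine subspace; the hypothesis $F \cap \mathcal{H}_i \neq \emptyset$ is precisely the hypothesis $S \cap L \neq \emptyset$ of Lemma~\ref{lemma:Hoffman}. The lemma then yields some $\theta \ge 0$ with $\dist(\bzeta, F \cap \mathcal{H}_i) \le \theta\, \dist(\bzeta, \mathcal{H}_i)$ for all $\bzeta \in F$, which shows that the set appearing in the definition of $\theta_i(F)$ is a nonempty subset of $[0,\infty)$; therefore its infimum $\theta_i(F)$ is a finite nonnegative number. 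Combining this with the reduction above gives $\theta_\Xi^T < \infty$.

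I do not expect a genuine obstacle: the entire technical content is packaged inside Lemma~\ref{lemma:Hoffman}, and the rest is bookkeeping. The only points warranting a moment's care are that a face of a polyhedron is again a polyhedron (so Hoffman's lemma applies), that $F \cap \mathcal{H}_i \neq \emptyset$ forces $F$ to be nonempty, and the degenerate case in which there is no pair $(F,i)$ with $F \cap \mathcal{H}_i \neq \emptyset$ — in which case the inner maximum is vacuous and $\theta_\Xi^T = 1$, still consistent with the claim.
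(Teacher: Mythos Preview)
Your proposal is correct and follows essentially the same approach as the paper's own proof: the lower bound is immediate from Definition~\ref{defn:maximal_tangent}, and for the upper bound both arguments reduce to finitely many pairs $(F,i)$, invoke Hoffman's lemma (Lemma~\ref{lemma:Hoffman}) with $S=F$ and $L=\mathcal{H}_i$ to show each $\theta_i(F)<\infty$, and handle the vacuous case where no admissible pair exists.
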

\begin{proof}{Proof.}
	It trivially holds from Definition~\ref{defn:maximal_tangent} that $\theta^T_\Xi \ge 1$, so it remains to show that $\theta^T_\Xi < \infty$.  If $F \cap \mathcal{H}_i = \emptyset$ for all $F \in \Face(T)$ and $i \in [\tilde{m}]$, then $$\theta^T_\Xi = \max \{1,-\infty \} = 1.$$
	Otherwise, consider any $F \in \Face(T)$ and $i \in [\tilde{m}]$ such that $F \cap \mathcal{H}_i \neq \emptyset$.  Then, it follows from Hoffman's lemma (Lemma~\ref{lemma:Hoffman}) that $\theta_i(F) < \infty$.  Since we are maximizing over finitely many faces and hyperplanes, $\theta_\Xi^T$ must be finite as well. 
	\halmos \end{proof}

\section{Proof of Lemma~\ref{lem:mp:feas_then_C_feas} from Section~\ref{sec:mp:asymptotic:L_Feasibility}}\label{appx:C_feas}
In this appendix, we utilize the definitions and results from Appendix~\ref{appx:polyhedral} to prove Lemma~\ref{lem:mp:feas_then_C_feas}. We repeat the lemma below for convenience. 
\begin{repeatlemma} 
	Conditions~[\ref{ass:feas}], [\ref{lem:mp:feas:II}], and [\ref{lem:mp:feas:III}] are equivalent. 
	\begin{enumerate}[label={\bf [A\arabic*]},ref=A\arabic*]
		\setcounter{enumi}{4}
		\item There exists a first-stage decision $\bx\in \Real^n $ and a radius $\epsilon>0$ for which the following holds:
		\begin{enumerate}[(a)]
			\item $Q(\bx,\bzeta)<\infty$ for all $\bzeta\in\Xi$. \label{eq:lem:mp:item_feas}
			\item For each  minimal face $F$ of $\Xi$, there exists a recourse matrix $\bby^F\in\Real^{r\times d}$ such that $Q{}^{\left\{\bby^F\right\}}_{{\epsilon}}(\bx,\bzeta)<\infty$ for all $\bzeta\in F$. \label{eq:lem:mp:item_feas_2}
		\end{enumerate}
		\item There exists a first-stage decision $\bx\in \Real^n $, a radius $\epsilon>0$, and a finite set of recourse matrices $\mathcal{C}\subseteq\Real^{r\times d}$  such that  $Q_{\epsilon}^\mathcal{C}(\bx,\bzeta)<\infty$ for all $\bzeta\in \Xi$.
	\end{enumerate}
\end{repeatlemma}
Our proof of Lemma~\ref{lem:mp:feas_then_C_feas} is split into three parts. Specifically, we show that  [\ref{lem:mp:feas:III}] implies [\ref{ass:feas}] (Lemma~\ref{lem:mp:C_feas_then_L_feas}), [\ref{ass:feas}] implies [\ref{lem:mp:feas:II}] (Lemma~\ref{lem:mp:L_feas_C_feas_on_min_face}), and [\ref{lem:mp:feas:II}] implies [\ref{lem:mp:feas:III}]  (Lemma~\ref{lem:mp:C_feas_on_min_face_C_feas}). These three results when combined prove Lemma~\ref{lem:mp:feas_then_C_feas}.   
\begin{customlemma}{1A}\label{lem:mp:C_feas_then_L_feas}
	[\ref{lem:mp:feas:III}] implies [\ref{ass:feas}]. 
\end{customlemma} 
\begin{proof}{Proof.}
	Let $\bx\in \R$, $\mathcal{C}\subseteq\R^{r\times d}$, and $\bar{\epsilon}>0$ satisfy condition~[\ref{lem:mp:feas:III}].
	Since $\mathcal{C}\subseteq\R^{r\times d}$, it follows from the Definition~\ref{def:Q_C} that
	$Q^{\R^{r\times d}}_{\bar{\epsilon}}(\bx,\bzeta)\leq Q^{\mathcal{C}}_{\bar{\epsilon}}(\bx,\bzeta)<\infty$ for all $\bzeta\in \Xi$, 
	which immediately implies that [\ref{ass:feas}] holds. \halmos 
\end{proof}
To prove the other two directions (Lemmas~\ref{lem:mp:feas_then_C_feas}B and \ref{lem:mp:feas_then_C_feas}C), 
we utilize the following additional result (Lemma~\ref{lem:mp:shifting_minimal_face_balls}) which relates the ``radius of points" (Definition~\ref{defn:radius_point}) to sets of the form $B(\hat{\bzeta},\epsilon)\cap\Xi$. More precisely, Lemma~\ref{lem:mp:shifting_minimal_face_balls} shows that if two points $\hat{\bzeta},\bar{\bzeta}\in\Xi$ are contained in the relative interior of the same face of $\Xi$,  and if  $\epsilon \ge 0 $ is no larger than the radius of either point, then $B(\hat{\bzeta},\epsilon)\cap\Xi$ and $B(\bar{\bzeta},\epsilon)\cap\Xi$ have the same geometric shape; an illustration is provided in Figure~\ref{fig:mp:shifting_balls}. 
\begin{figure}[t]
	\centering
	\definecolor{mycolor}{rgb}{.5,0.7,1}

%
%
%
\begin{tikzpicture}[scale=0.8]
\def\zax{2.5}
\def\zay{2.5}
\def\zbx{3.66666}
\def\zby{6}
\def\epsilona{1.55}
\def\epsilonb{0.98}

\filldraw [fill=white] (2,1)--(4,7)--(1,7)--(0,4)--(1.5,1)--(2,1);
\begin{scope}
\clip   (2,1)--(4,7)--(1,7)--(0,4)--(1.5,1)--(2,1);
\draw [black,line width=0.5pt,fill=red!50] (\zax,\zay) circle (\epsilona);
\draw [black,line width=0.5pt,fill=mycolor] (\zbx,\zby) circle (\epsilonb);
\def\n{5}
\foreach \i in {1,...,\n}{
	\draw [black,line width=0.5pt,dashed,fill=mycolor,opacity=0.2] ({\i*\zax/\n+(1-\i/\n)*\zbx},{\i/\n*\zay+(1-\i/\n)*\zby}) circle (\epsilonb);
	\draw [black,line width=0.5pt,dashed] ({\i*\zax/\n+(1-\i/\n)*\zbx},{\i/\n*\zay+(1-\i/\n)*\zby}) circle (\epsilonb);
}
\end{scope}

\draw [line width=0.5pt,black] (2,1)--(4,7)--(1,7)--(0,4)--(1.5,1)--(2,1);
\node [] at (\zax-0.15,\zay+0.10) {{\small  $\hat{\bzeta}$}};
\node [] at (\zbx-0.15,\zby+0.10) {{\small  $\bar{\bzeta}$}};
\begin{scope}
\clip  (2,8)--(-1,4)--(2,0) -- (4,0) -- (4,8);
\fill [black] (\zax,\zay) circle (1pt);
\fill [black] (\zbx,\zby) circle (1pt);
\end{scope}
\node at (1.5,0) {$\hat{\bzeta}$ and $\bar{\bzeta}$ on a facet of $\Xi$};
\end{tikzpicture}\qquad \qquad
\begin{tikzpicture}[scale=0.8]
\def\zax{1.6}
\def\zay{4.3}
\def\epsilona{1.42}
\def\zbx{2.9}
\def\zby{6}
\def\epsilonb{0.72}

\filldraw [fill=white] (2,1)--(4,7)--(1,7)--(0,4)--(1.5,1)--(2,1);
\begin{scope}
\clip   (2,1)--(4,7)--(1,7)--(0,4)--(1.5,1)--(2,1);
\draw [black,line width=0.5pt,fill=mycolor] (\zax,\zay) circle (\epsilona);
\draw [black,line width=0.5pt,fill=red!50] (\zbx,\zby) circle (\epsilonb);
\def\n{5}
\foreach \i in {1,...,\n}{
	\draw [black,line width=0.5pt,dashed,fill=red!50,opacity=0.2] ({\i*\zax/\n+(1-\i/\n)*\zbx},{\i/\n*\zay+(1-\i/\n)*\zby}) circle (\epsilonb);
	\draw [black,line width=0.5pt,dashed] ({\i*\zax/\n+(1-\i/\n)*\zbx},{\i/\n*\zay+(1-\i/\n)*\zby}) circle (\epsilonb);
}
\end{scope}
\draw [line width=0.5pt,black] (2,1)--(4,7)--(1,7)--(0,4)--(1.5,1)--(2,1);
\begin{scope}
\clip  (2,8)--(-1,4)--(2,0) -- (4,0) -- (4,8);
\node [] at (\zax+0.15,\zay+0.15) {{\small  $\bar{\bzeta}$}};
\fill [black] (\zax,\zay) circle (1pt);
\node [] at (\zbx+0.15,\zby+0.15) {{\small  $\hat{\bzeta}$}};
\fill [black] (\zbx,\zby) circle (1pt);
\end{scope}
\node at (1.5,0) {$\hat{\bzeta}$ and $\bar{\bzeta}$ in the interior of $\Xi$};
\end{tikzpicture} 
		\caption{For each of the figures, the red region is the set $B(\hat{\bzeta}{},\Delta_\Xi(\hat{\bzeta}{})) \cap \Xi$ and the blue region is the set $B(\bar{\bzeta}{},\Delta_\Xi(\bar{\bzeta}{})) \cap \Xi$. The dotted regions show that the smaller set can be shifted between the two points while maintaining its shape. 
	}
	\label{fig:mp:shifting_balls}
\end{figure}
\begin{lemma} \label{lem:mp:shifting_minimal_face_balls}
	Let $F \in \Face(\Xi)$. Then for all $\hat{\bzeta}, \bar{\bzeta} \in \relint(F)$ and all $\epsilon \in[0,\min\{\Delta_\Xi(\hat{\bzeta}),\Delta_\Xi(\bar{\bzeta})\}]$, 
	\begin{align*}
	B(\hat{\bzeta}, \epsilon) \cap \Xi + \bar{\bzeta} - \hat{\bzeta} = B(\bar{\bzeta},\epsilon) \cap \Xi.
	\end{align*}
	Moreover, if $F$ is a minimal face of $\Xi$, then the equality holds true for any $\epsilon\geq 0$.
\end{lemma}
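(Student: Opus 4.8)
The plan is to prove both set inclusions $B(\hat{\bzeta},\epsilon)\cap\Xi + \bar{\bzeta}-\hat{\bzeta} \subseteq B(\bar{\bzeta},\epsilon)\cap\Xi$ and its reverse; by symmetry in $\hat{\bzeta}$ and $\bar{\bzeta}$ (both points and their radii enter symmetrically, and $\epsilon \le \min\{\Delta_\Xi(\hat{\bzeta}),\Delta_\Xi(\bar{\bzeta})\}$ is symmetric), it suffices to prove one inclusion. So first I would fix $\bzeta \in B(\hat{\bzeta},\epsilon)\cap\Xi$ and set $\bzeta' \triangleq \bzeta + \bar{\bzeta}-\hat{\bzeta}$, with the goal of showing $\bzeta' \in B(\bar{\bzeta},\epsilon)\cap\Xi$. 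The ball membership is immediate since $\|\bzeta' - \bar{\bzeta}\| = \|\bzeta - \hat{\bzeta}\| \le \epsilon$. The real content is showing $\bzeta' \in \Xi$, which I would do constraint-by-constraint: for each $i \in [\tilde m]$, verify $\bg_i^\intercal \bzeta' \ge g_i^0$.

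The verification splits according to whether constraint $i$ is active at the face $F$. \emph{Case 1: $i \in I_\Xi(\hat{\bzeta})$, i.e. $\hat{\bzeta}\in\mathcal{H}_i$.} Since $\hat{\bzeta},\bar{\bzeta} \in \relint(F)$ and $F$ is a face of $\Xi$, the active index sets of relative-interior points of $F$ coincide, so $\bar{\bzeta}\in\mathcal{H}_i$ as well; hence $\bg_i^\intercal(\bar{\bzeta}-\hat{\bzeta}) = g_i^0 - g_i^0 = 0$, and therefore $\bg_i^\intercal\bzeta' = \bg_i^\intercal\bzeta \ge g_i^0$ because $\bzeta\in\Xi$. \emph{Case 2: $i \notin I_\Xi(\hat{\bzeta})$.} Here I use the radius bound exactly as in the proof of Lemma~\ref{lemma:coneshrinkege}: since $\bzeta' \in B(\bar{\bzeta},\epsilon)$, we have $\bg_i^\intercal\bzeta' - g_i^0 \ge -\epsilon\|\bg_i\|_* + g_i^0 - \bg_i^\intercal\bar{\bzeta} + \bg_i^\intercal\bar{\bzeta} = -\epsilon + \dist(\bar{\bzeta},\mathcal{H}_i)$ (using $\|\bg_i\|_*=1$ and \eqref{eq:dist_point_hyper}). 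Wait --- I must be careful that $i \notin I_\Xi(\bar{\bzeta})$ too, which again follows from $\hat{\bzeta},\bar{\bzeta}\in\relint(F)$ having identical active sets; and then $\dist(\bar{\bzeta},\mathcal{H}_i) \ge \Delta_\Xi(\bar{\bzeta}) \ge \epsilon$ by Definition~\ref{defn:radius_point} and the hypothesis on $\epsilon$, giving $\bg_i^\intercal\bzeta' \ge g_i^0$. Combining both cases over all $i$ yields $\bzeta'\in\Xi$, completing one inclusion; symmetry finishes the first claim.

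For the ``moreover'' part, when $F$ is a minimal face of $\Xi$, the key observation is that $\Delta_\Xi(\hat{\bzeta}) = \Delta_\Xi(\bar{\bzeta}) = \infty$ for any $\hat{\bzeta},\bar{\bzeta}\in\relint(F)$ --- indeed, for a minimal face every constraint $i\in[\tilde m]$ either satisfies $\mathcal{H}_i\cap F = F$ (so $i$ is active everywhere on $F$, hence $i\in I_\Xi(\hat{\bzeta})$) or $\mathcal{H}_i\cap F = \emptyset$; in the latter situation one checks $\dist(\hat{\bzeta},\mathcal{H}_i)$ is constant along translations within $F$'s affine hull, but more simply: a minimal face is an affine subspace intersected with $\Xi$, and along it no inactive constraint is ever tight, so actually I should argue $I_\Xi(\bzeta) = I_\Xi(\hat{\bzeta})$ for \emph{all} $\bzeta$ in the affine hull of $F$ that lie in $\Xi$, making $[\tilde m]\setminus I_\Xi(\hat{\bzeta})$ contribute only constraints whose hyperplanes are parallel to and disjoint from $\aff(F)$. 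The cleanest route is: for a minimal face, $[\tilde m]\setminus I_\Xi(\hat{\bzeta})$ consists of constraints with $\mathcal{H}_i \cap F = \emptyset$, and since translation by $\bar{\bzeta}-\hat{\bzeta}$ keeps us within $\aff(F)$, Case~1 covers the active constraints and for the inactive ones $\bg_i^\intercal(\bar{\bzeta}-\hat{\bzeta})=0$ as well (the direction $\bar{\bzeta}-\hat{\bzeta}$ lies in the lineality-type direction of $F$, orthogonal in the relevant sense to each active $\bg_i$, but for minimal faces it is in fact orthogonal to \emph{every} $\bg_i$ — this needs the structural fact that a minimal face equals $\{\bzeta : \bg_i^\intercal\bzeta = g_i^0,\ i\in I\} $ for its active set $I$, an affine subspace). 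Thus $\bg_i^\intercal\bzeta' = \bg_i^\intercal\bzeta \ge g_i^0$ for \emph{all} $i$, with no restriction on $\epsilon$.

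\textbf{Main obstacle.} The technical heart is justifying that translation by $\bar{\bzeta} - \hat{\bzeta}$ preserves the value $\bg_i^\intercal(\cdot)$ for the relevant constraints. For the first claim this is only needed for active constraints and follows cleanly from the invariance of active index sets on $\relint(F)$; the mild subtlety is confirming $i\notin I_\Xi(\bar{\bzeta})$ in Case~2 (same invariance). For the ``moreover'' part, the obstacle is showing $\bar{\bzeta}-\hat{\bzeta}$ is annihilated by every $\bg_i$, which rests on the polyhedral fact that a minimal face is an affine subspace of the form $\aff(F) = \{\bzeta : \bg_i^\intercal\bzeta = g_i^0 \text{ for } i \in I_\Xi(\hat{\bzeta})\}$ and that the remaining constraints are ``slack with slack bounded below'' along all of $\aff(F)$ — I would invoke the standard characterization of minimal faces from \citet[Section 3]{Conforti2014} rather than reprove it.
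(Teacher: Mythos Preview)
Your proposal is correct and follows essentially the same approach as the paper: prove one inclusion and appeal to symmetry, split constraint-by-constraint according to whether $i$ is active on $\relint(F)$ (using $I_\Xi(\hat{\bzeta})=I_\Xi(\bar{\bzeta})$), and handle inactive constraints via the radius bound $\dist(\bar{\bzeta},\mathcal{H}_i)\ge\Delta_\Xi(\bar{\bzeta})\ge\epsilon$. For the minimal-face part the paper argues a bit more directly than you do: rather than invoking a structural characterization from \citet{Conforti2014}, it simply observes that a minimal face is an affine subspace, so the entire line $\bar{\bzeta}+\alpha(\hat{\bzeta}-\bar{\bzeta})$ lies in $F\subseteq\Xi$ for all $\alpha\in\R$, which immediately forces $\bg_i^\intercal(\hat{\bzeta}-\bar{\bzeta})=0$ for \emph{every} $i\in[\tilde m]$ and hence $\bg_i^\intercal\bzeta'=\bg_i^\intercal\bzeta\ge g_i^0$ with no restriction on $\epsilon$.
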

\begin{proof}{Proof.}
	Let $F \in \Face(\Xi)$, $\hat{\bzeta}, \bar{\bzeta} \in \relint(F)$ and 
	\begin{align}
	\epsilon \in[0,\min\{\Delta_\Xi(\hat{\bzeta}),\Delta_\Xi(\bar{\bzeta})\}]. \label{line:mp:giving_title_to_epsilon}
	\end{align} 
	 Our proof consists of showing that 
	\begin{align}
	B(\hat{\bzeta},\epsilon) \cap \Xi+\bar{\bzeta} - \hat{\bzeta}\subseteq B(\bar{\bzeta},\epsilon) \cap \Xi, \label{line:mp:okay_lets_be_creative}
	\end{align}
	and the converse follows by identical reasoning.
	
	We readily observe that
	\begin{align*}
	B(\hat{\bzeta},\epsilon) \cap \Xi + \bar{\bzeta} - \hat{\bzeta}\subseteq  B(\bar{\bzeta},\epsilon), 
	\end{align*}
	so it remains to be shown that
	\begin{align}
	B(\hat{\bzeta},\epsilon) \cap \Xi + \bar{\bzeta} - \hat{\bzeta}\subseteq  \Xi. \label{line:mp:choice_of_bzeta_in_second_lemma_2}
	\end{align}
	Indeed, choose any arbitrary $\bzeta \in B(\hat{\bzeta},\epsilon) \cap \Xi$, and observe that the active index sets of the two points satisfy $I_{\Xi}(\hat{\bzeta}) = I_{\Xi}(\bar{\bzeta})$ since $\hat{\bzeta}$ and $\bar{\bzeta}$ are in the relative interior of the same face of $\Xi$. 
	Recall from Appendix~\ref{appx:polyhedral} that the $i$-th constraint of $\Xi$ is given by  $g^0_i-\bg_i^\intercal\bzeta\ge 0$.   
	Therefore, for any constraint $i\notin I_\Xi(\bar{\bzeta})$, 
	\begin{align}
	g^0_i-\bg_i^\intercal(\bzeta + \bar{\bzeta} - \hat{\bzeta})&\geq \Delta_\Xi(\bar{\bzeta}) - \bg_i^\intercal(\bzeta-\hat{\bzeta}) \notag \\ 
	&\geq \Delta_\Xi(\bar{\bzeta}) - \epsilon \notag \\
	&\geq0. \label{line:mp:i_am_bad_at_naming_things}
	\end{align}
	Indeed, the first inequality follows from  Definition~\ref{defn:radius_point} and line~\eqref{eq:dist_point_hyper} 
	(see Appendix~\ref{appx:poly:general}). The second inequality follows from $\| \bg_i \|_* = 1$, $\|\bzeta-\hat{\bzeta}\|\leq \epsilon$, and H\"{o}lder's inequality. Finally, line~\eqref{line:mp:i_am_bad_at_naming_things} follows from \eqref{line:mp:giving_title_to_epsilon}. Moreover, for any constraint $i \in I_\Xi(\bar{\bzeta})$, 
	\begin{align}
	g^0_i-\bg_i^\intercal(\bzeta + \bar{\bzeta} - \hat{\bzeta}) = g^0_i-\bg_i^\intercal\bzeta \ge 0, \label{line:mp:i_am_bad_at_naming_things_2}
	\end{align}
	where the equality follows from $\bg_i^\intercal\hat{\bzeta}=\bg_i^\intercal\bar{\bzeta}=g^0_i$, and the inequality follows from $\bzeta\in \Xi$. Combining \eqref{line:mp:i_am_bad_at_naming_things} and \eqref{line:mp:i_am_bad_at_naming_things_2}, and recalling that $\bzeta$ was chosen arbitrarily, we have shown that line~\eqref{line:mp:choice_of_bzeta_in_second_lemma_2} holds. 
	
	Suppose now that $F$ is a minimal face of $\Xi$, and consider the vector $\br \triangleq \hat{\bzeta}-\bar{\bzeta}.$ Since every minimal face is an affine subspace, 
	$$\bar{\bzeta}+\alpha\br\in F, \quad \forall \alpha \in \R,$$
	and since $F\subseteq\Xi$, 
	$$\bg_i^\intercal(\bar{\bzeta}+\alpha\br)\leq g^0_i, \quad \forall \alpha \in \R, \; i \in [\tilde{m}]. $$ 
	Consequently, $$\bg_i^\intercal\br=0, \quad \forall i\in[\tilde{m}].$$ 
	Therefore, for all $\epsilon \ge 0$, $i \in [\tilde{m}]$, and $\bzeta \in B(\hat{\bzeta},\epsilon) \cap \Xi,$
	we have shown that
	\begin{align*}
	g_i^0 - \bg_i^\intercal (\bzeta + \bar{\bzeta} - \hat{\bzeta}) = g_i^0 - \bg_i^\intercal \bzeta + \bg_i^\intercal \br = g_i^0 - \bg_i^\intercal \bzeta \ge 0,
	\end{align*} 
	which concludes the proof for minimal faces.
	\Halmos \end{proof}
With the above lemma, we are now ready to prove the second and third parts of Lemma~\ref{lem:mp:feas_then_C_feas}
\begin{customlemma}{1B}\label{lem:mp:L_feas_C_feas_on_min_face}
	 	[\ref{ass:feas}] implies [\ref{lem:mp:feas:II}]. 
\end{customlemma}
\begin{proof}{Proof.}
	Let the first-stage decision $\bx\in \R^n$ and radius ${\epsilon}>0$ satisfy condition~[\ref{ass:feas}]. Then it follows from the definitions of $Q(\bx,\cdot)$ and $Q^{\R^{r\times d}}_{{\epsilon}}(\bx,\cdot)$ that
	\begin{align*}
	Q(\bx,\bzeta)\leq Q^{\R^{r\times d}}_{{\epsilon}}(\bx,\bzeta)<\infty,\; \forall \bzeta\in \Xi,
	\end{align*}
	where the last inequality follows from condition~[\ref{ass:feas}]. Therefore we have shown that the first-stage decision $\bx \in \R^n$ satisfies condition~[\ref{lem:mp:feas:II}]\ref{eq:lem:mp:item_feas}.
	
	Consider any arbitrary minimal face $F$ of the polyhedron $\Xi \subseteq \R^n$, and fix a realization $\bar{\bzeta} \in F$. Then it follows from condition~[\ref{ass:feas}] that there exists a vector $\bar{\by}^{0} \in \R^r$ and matrix $\bby \in \R^{r \times d}$ such that 
	\begin{equation} 
	\bbt {\bx} + \bbw\left(\bar{\by}^{0} + \bby \bzeta \right) \ge \bh(\bzeta),\; \forall \bzeta \in B(\bar{\bzeta}, \bar{\epsilon}) \cap \Xi .\label{eqn:mp:hat_z_bar_delta_good}
	\end{equation}
	We will henceforth use the notation $\bby^F \triangleq \bby$.
	
	Now consider any arbitrary realization $\hat{\bzeta} \in F$. Since $F$ is a minimal face, it is an affine subspace, which implies that 
	$\bar{\bzeta} + \alpha (\hat{\bzeta}- \bar{\bzeta}) \in F$ for all $\alpha \in \R$. 
	Therefore, condition [\ref{ass:feas}] implies that
	\begin{align}\label{lem:mp:ass_farkas}
	\left \{ \by^0 \in \R^{r}: \; \bbt \bx + \bbw \by^0 \ge \bh^0 + \bbh \bar{\bzeta} + \bbh (\alpha(\hat{\bzeta} - \bar{\bzeta})) \right \} \neq \emptyset, \quad \forall \alpha \in \R.
	\end{align}
Using \eqref{lem:mp:ass_farkas}, we will now prove that 
		\begin{equation}
		\left \{ \tilde{\by}^0 \in \R^r: \;  \bbw\tilde{\by}^0\geq -\bbh(\bar{\bzeta}-\hat{\bzeta}) \right \} \neq \emptyset. \label{eqn:mp:rayvector}\end{equation}
		Indeed, suppose for the sake of contradiction that line~\eqref{eqn:mp:rayvector} is false. Then, it follows from Farkas' lemma 
		 that there exists a vector $\bp \in \R^m$ which satisfies
		\begin{align*}
		\bp^\intercal  \bbh (\bar{\bzeta}-\hat{\bzeta})< 0,\; \bbw^\intercal \bp = \bzero,\; \bp\ge 0.
		\end{align*}
		Since $\bp^\intercal \bbh (\bar{\bzeta}-\hat{\bzeta}) < 0$, there exists a sufficiently small ${\alpha}<0$ such that
		\begin{align*}
		\alpha \bp^\intercal  \bbh (\bar{\bzeta}-\hat{\bzeta})  - \bp^\intercal \left(\bh(\bar{\bzeta}) -\bbt\bx \right) < 0 ,\; \bbw^\intercal \bp = \bzero, \; \bp \ge \bzero.
		\end{align*}
		Applying Farkas' lemma once again, we conclude that there does \emph{not} exist a $\by^0 \in \R^r$ that satisfies
		\begin{align*}
		\bbt \bx +  \bbw \by^0 \ge \bh \left(\bar{\bzeta} + \alpha(\hat{\bzeta} - \bar{\bzeta}) \right). 
		\end{align*}
		However, this forms a contradiction with line~\eqref{lem:mp:ass_farkas}. Therefore, we have proven that line~\eqref{eqn:mp:rayvector} holds. 

	We now combine lines~\eqref{eqn:mp:hat_z_bar_delta_good} and \eqref{eqn:mp:rayvector} to show that the inequality $Q{}^{\left\{\bby^F\right\}}_{{\epsilon}}(\bx,\hat{\bzeta})<\infty$ is satisfied. Indeed, 
	 it follows from Lemma~\ref{lem:mp:shifting_minimal_face_balls}, the fact that $F$ is a minimal face, and line~\eqref{eqn:mp:hat_z_bar_delta_good} that 
	\begin{align} 
	\bbt {\bx} + \bbw\left(\bar{\by}^{0} + \bby^F\left( \bzeta + \bar{\bzeta} - \hat{\bzeta} \right) \right)&\ge \bh(\bzeta + \bar{\bzeta}- \hat{\bzeta} ),&& \forall \bzeta \in B(\hat{\bzeta}, \bar{\epsilon}) \cap \Xi \notag \\
		\iff \bbt {\bx} + \bbw\left( \left( \bar{\by}^{0} + \bby^F \bar{\bzeta} - \bby^F \hat{\bzeta} \right) +  \bby^F \bzeta  \right)&\ge \bh(\bzeta) + \bbh(\bar{\bzeta} - \hat{\bzeta}),&& \forall \bzeta \in B(\hat{\bzeta}, \bar{\epsilon}) \cap \Xi, \label{eqn:mp:hat_z_bar_shifted_delta_good}
	\end{align}
	where the second line follows from the definition of $\bh(\bzeta) \triangleq \bh^0 + \bbh \bzeta$. Combining lines~\eqref{eqn:mp:hat_z_bar_shifted_delta_good} and \eqref{eqn:mp:rayvector}, there exists a vector $\tilde{\by}^0 \in \R^r$ such that 
	\begin{align*}
	\left( \bbt {\bx} + \bbw\left( \left( \bar{\by}^{0} + \bby^F \bar{\bzeta} - \bby^F \hat{\bzeta} \right) +  \bby^F \bzeta  \right)\right)  + \bbw \tilde{\by}^0 &\ge\left(  \bh(\bzeta) + \bbh(\bar{\bzeta} - \hat{\bzeta})\right) - \bbh(\bar{\bzeta} - \hat{\bzeta}), \forall \bzeta \in B(\hat{\bzeta}, \bar{\epsilon}) \cap \Xi, 
	\end{align*}
	or equivalently, 
	\begin{align*}
	\bbt {\bx} + \bbw\left( \underbrace{\left( \bar{\by}^{0} + \tilde{\by}^0 + \bby^F \bar{\bzeta} - \bby^F \hat{\bzeta} \right)}_{\hat{\by}^0} +  \bby^F \bzeta  \right)  &\ge  \bh(\bzeta) , \quad \forall \bzeta \in B(\hat{\bzeta}, \bar{\epsilon}) \cap \Xi. 
	\end{align*}
	We have thus proven that the inequality $Q{}^{\left\{\bby^F\right\}}_{{\epsilon}}(\bx,\hat{\bzeta})<\infty$ holds. Since the realization $\hat{\bzeta} \in F$ was chosen arbitrarily, and since the above reasoning holds for any minimal face of $\Xi$, our proof of condition~[\ref{lem:mp:feas:II}]\ref{eq:lem:mp:item_feas_2} is complete. 
	\Halmos
\end{proof}

\begin{customlemma}{1C}\label{lem:mp:C_feas_on_min_face_C_feas}
	[\ref{lem:mp:feas:II}] implies [\ref{lem:mp:feas:III}].
\end{customlemma}
\begin{proof}{Proof.}  
	Assume that condition~[\ref{lem:mp:feas:II}] holds, and let us define 
	\begin{align}\label{eq:C_def} \mathcal{C}\triangleq \left \{ \bby^F : F \text{ is a minimal face of } \Xi \right \}, 
	\end{align}
	where the matrices $\bby^F \in \R^{r \times d}$ are those given by condition~[\ref{lem:mp:feas:II}]\ref{eq:lem:mp:item_feas_2}.  Furthermore, let the first-stage decision $\bx \in \R^n$ and $\bar{\epsilon} > 0$ be those given by condition~[\ref{lem:mp:feas:II}]\ref{eq:lem:mp:item_feas_2}. Since any polyhedron $\Xi \subseteq \R^d$ has a finite number of minimal faces, it follows that $\mathcal{C} \subseteq \R^{r \times d}$ is a finite set. 

For each dimension $k\in\{\dimlower(\Xi),\ldots,\dim(\Xi)\}$, let us define 
\begin{align}
\epsilon_k &\triangleq  (4\theta^{\Xi}_\Xi)^{\underline{\text{dim}}(\Xi)-k} \min\{\Delta_\Xi(\Xi), \bar{\epsilon}\}. \label{line:mp:defn:delta_k}
\end{align}
It was shown in Lemma~\ref{lem:mp:radius} that any radius of a polyhedron $\Delta_\Xi(\Xi)$ is strictly positive, and shown in Lemma~\ref{lem:mp:angle} that the angle of polyhedron $\theta^\Xi_\Xi$ is contained in the interval $[1,\infty)$. 
Hence, it follows that each $\epsilon_k$ is strictly positive. Moreover, we easily observe that $\epsilon_k$ is decreasing in $k$.  Therefore, defining $\epsilon \triangleq \epsilon_{\dim(\Xi)}$, condition~[\ref{lem:mp:feas:III}] is a direct result of 
\begin{align*}Q^{\mathcal{C}}_{{\epsilon}}(\bx,\hat{\bzeta})\leq Q^{\mathcal{C}}_{\epsilon_k}(\bx,\hat{\bzeta})<\infty\quad\forall \hat{\bzeta}\in \relint(F)\end{align*}
for each dimension $k\in\{\dimlower(\Xi),\ldots,\dim(\Xi)\}$ and $F \in \Face(\Xi)$ with $\dim(F) = k$, where the last inequality $Q^{\mathcal{C}}_{\epsilon_k}(\bx,\hat{\bzeta})<\infty$  is a result of the following claim, which we now prove.

\paragraph{\underline{Claim:}}
Let the first-stage decision $\bx\in \Real^n$ and radius $\bar{\epsilon}>0$ satisfy condition~[\ref{lem:mp:feas:II}], and let
$\mathcal{C}\subseteq\Real^{r\times d}$ be defined as in line~\eqref{eq:C_def}.
Then for each dimension $k\in\{\dimlower(\Xi),\ldots,\dim(\Xi)\}$, for each $F \in \Face(\Xi)$ with $\dim(F) = k$, and for all realizations $\hat{\bzeta} \in \relint(F)$, there exists a vector $\by^0\in\Real^r$ and matrix $\bby\in \mathcal{C}$ which satisfy 
\begin{align}\label{eq:fixed_Y_induction}
\bbt \bx + \bbw (\by^0 + \bby \bzeta) \ge \bh(\bzeta),\; \forall \bzeta \in B(\hat{\bzeta}, \epsilon_k) \cap \Xi.
\end{align} 
\paragraph{\underline{Proof of Claim:}}
We prove the claim by induction on $k$.
	\begin{itemize}
		\item \textbf{Base case}: Suppose  that $k = \dimlower(\Xi)$. We recall that the faces $F \in \Face(\Xi)$ with dimension $\dim(F) = \dimlower(\Xi)$  are exactly the minimal faces of $\Xi$.  Thus, the claim follows immediately from condition~[\ref{lem:mp:feas:II}]\ref{eq:lem:mp:item_feas_2} and from the definition of $\mathcal{C} \subseteq \R^{r \times d}$ in line~\eqref{eq:C_def}.  
		
		\item \textbf{Induction step}: Fix any dimension $k \in \{\dimlower(\Xi)+1,\ldots,\dim(\Xi)\}$, and assume that the claim holds for all $F \in \Face(\Xi)$ with $\dim(F) \le k-1$. 
		
		Consider any $F \in \Face(F)$ with $\dim(F) = k$ and any realization $\hat{\bzeta} \in \relint(F)$. There are two cases to consider.
		\begin{itemize}
			\item (\underline{Case 1}) Suppose that 
			\begin{align}
			\Delta_\Xi(\hat{\bzeta}) \le {2} \epsilon_k. \label{line:mp:condition_one_for_final_proof}
			\end{align}
			For this case, we will prove a stronger version of the claim: namely, that there exist a vector $\by^0 \in \R^r$ and matrix $\bby \in \mathcal{C}$ such that 
			\begin{align}\label{eq:fixed_Y_induction_doubled}
			\bbt \bx + \bbw (\by^0 + \bby \bzeta) \ge \bh(\bzeta),\; \forall \bzeta \in B(\hat{\bzeta}, 2\epsilon_k) \cap \Xi.
			\end{align} 
			Indeed, since \eqref{line:mp:condition_one_for_final_proof} holds,  there exists a constraint $i\in[\tilde{m}]\setminus I_\Xi(\hat{\bzeta})$ such that
			\begin{align}
			\dist(\hat{\bzeta},\mathcal{H}_i)\leq 2\epsilon_k. \label{line:mp:dist_first_case_shim}
			\end{align}
			We will now show that 
			\begin{align}
			\mathcal{H}_i\cap F\neq \emptyset. \label{line:mp:hyperplane_face_empty}
			\end{align}
			Indeed, suppose for the sake of contradiction that \eqref{line:mp:hyperplane_face_empty} was false. Then,
			\begin{align}
			\Delta_\Xi(\Xi) &\ge \min \{ \Delta_\Xi(\Xi), \bar{\epsilon} \} \notag \\
			&>2\epsilon_k \notag \\ 
			&\geq \dist(\hat{\bzeta},\mathcal{H}_i) \notag \\ 
			&\geq \dist(F,\mathcal{H}_i) \nonumber\\
			&\geq \Delta_\Xi(\Xi).\nonumber
			\end{align}
			The second line 
			follows from \eqref{line:mp:defn:delta_k}, Lemma~\ref{lem:mp:angle}, and $k > \dimlower(\Xi)$.  
			The third line follows from 
			\eqref{line:mp:dist_first_case_shim}. 
			The fourth and fifth lines follow from the definition of distance between sets (\eqref{eq:dist_poly} in Appendix~\ref{appx:poly:general}), Definition~\ref{defn:radius_face}, and the supposition that $\mathcal{H}_i\cap F=\emptyset$. 
			Thus, we have a contradiction, which concludes the proof of \eqref{line:mp:hyperplane_face_empty}. 
			
			Since we have shown that $F' \triangleq F\cap \mathcal{H}_i$ is nonempty and $i\notin I_\Xi(\hat{\bzeta})$, $F'$ must be a proper face of $F$, and therefore $\ell \triangleq \dim(F') \le k-1$.
			Letting $\tilde{\bzeta}$ be the projection of $\hat{\bzeta}$ onto $F'$,
			\begin{align}
			\| \tilde{\bzeta}-\hat{\bzeta} \|=\dist(\hat{\bzeta},F\cap \mathcal{H}_i)\leq \theta_\Xi^\Xi\dist(\hat{\bzeta},\mathcal{H}_i)\leq 2\theta^\Xi_\Xi\epsilon_k, \label{line:mp:so_many_labels}
			\end{align}
			where the first inequality follows from Definition~\ref{defn:maximal_tangent}  and the second inequality follows from \eqref{line:mp:dist_first_case_shim}. Moreover, let $F''\in \Face(F)$ be the face which satisfies $\tilde{\bzeta}\in\relint(F'')$,  and define $\ell'\triangleq \dim(F'')\leq \ell$.
			Since $\ell'\leq \ell \le k-1$, it follows from the induction hypothesis that \eqref{eq:fixed_Y_induction} 
			is satisfied at $\tilde{\bzeta}$ for some vector $\by^0 \in \R^r$, matrix $\bby \in \mathcal{C}$, and radius $\epsilon_{\ell'}$. Moreover, consider any arbitrary 
			\begin{align}
			\bzeta \in B(\hat{\bzeta}, 2\epsilon_k) \cap \Xi. \label{line:mp:choose_a_special_zeta}
			\end{align} Then, 
			$$\|\bzeta - \tilde{\bzeta}\| \leq \| \bzeta - \hat{\bzeta} \| + \| \tilde{\bzeta} - \hat{\bzeta} \| \le 4 \theta_{\Xi}^\Xi\epsilon_{k} \le \epsilon_{\ell}\leq \epsilon_{\ell'}.$$
			Indeed, the first inequality follows from the triangle inequality. The second inequality follows from \eqref{line:mp:so_many_labels}, \eqref{line:mp:choose_a_special_zeta}, and Lemma~\ref{lem:mp:angle} (which says that $\theta_{\Xi}^\Xi\geq 1$). The final two inequalities follow because $\ell'\leq \ell \le k-1 $ and the definition of $\epsilon_k$ given in \eqref{line:mp:defn:delta_k}.  Thus, we have proven that 
			\begin{align*}
			B(\hat{\bzeta}, 2\epsilon_{k}) \cap \Xi \subseteq  B(\tilde{\bzeta}, \epsilon_{\ell'}) \cap \Xi, 
			\end{align*}
			and so \eqref{eq:fixed_Y_induction_doubled} must also be satisfied at $\hat{\bzeta}$ with $\by^0$ and $\bby$. Since \eqref{eq:fixed_Y_induction_doubled} is stronger than \eqref{eq:fixed_Y_induction}, we have concluded the proof of the induction step for the case of \eqref{line:mp:condition_one_for_final_proof}. 
			
			\vspace{1em} 
			
			\item (\underline{Case 2}) Suppose that 
			\begin{align}
			\Delta_\Xi(\hat{\bzeta}) > {2} \epsilon_k. \label{line:mp:condition_two_for_final_proof}
			\end{align}
			In this case, we use a proof technique which is visualized in Figure~\ref{fig:mp:norm_shrink}.
			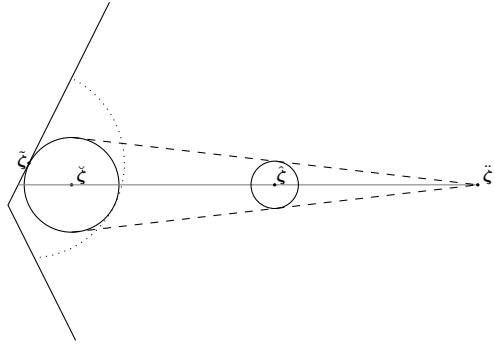
\begin{figure}[t]
				\centering
				\begin{tikzpicture}[scale=0.9]
\def\zax{0.25+0.69}
\def\zay{4.3}
\def\zbx{0.25+0.69+3}
\def\zby{4.3}
\def\zcx{0.25+0.69+6}
\def\zcy{4.3}
\def\zdx{0.31}
\def\zdy{4.62}
\def\ang{153}
\def\radius{0.7}
\def\radiusprevr{1.4}

\tikzmath{\textdistfromcenter=0.1pt;}
\filldraw [fill=white] (1.5,7)--(0,4)--(1,2);
\draw [] (\zax,\zay) circle (\radius);
\node [] at (\zax+0.15,\zay+0.15) {{\tiny  $\breve{\bzeta}$}};
\fill (\zax,\zay) circle (0.7pt);
%
%
\node [] at (\zdx-0.1,\zdy+0.05) {{\tiny  $\tilde{\bzeta}$}};
\fill (\zdx,\zdy) circle (0.7pt);
\begin{scope}
\clip  (1.5,7)--(0,4)--(1,2) -- (10,2) -- (10,6);
\draw [dotted] (\zdx+0.01,\zdy) circle (\radiusprevr);
\draw [color=gray] (0,\zay) -- (\zcx,\zcy);
\end{scope}
\draw [] (\zbx,\zby) circle (\radius/2);
\fill (\zbx,\zby) circle (0.7pt);
\node [] at (\zbx+0.1,\zby+0.15) {{\tiny  $\hat{\bzeta}$}};
%
\fill (\zcx,\zcy) circle (0.7pt);
\node [] at (\zcx+0.15,\zcy+0.15) {{\tiny  $\ddot{\bzeta}$}};

\draw [dashed] (\zax,\zay+\radius)--(\zcx,\zcy);
\draw [dashed] (\zax,\zay-\radius)--(\zcx,\zcy);
%
\end{tikzpicture}
\caption{Visualization of Case 2 - where $2\epsilon_k \le \Delta_\Xi(\hat{\bzeta})$. The ball around $\hat{\bzeta}$ has radius $\epsilon_k$,  and the ball around $\breve{\bzeta}$ has radius $2\epsilon_k$. The radius of the ball (dotted circle) around $\tilde{\bzeta}$ (which is found according to Case 1) is $4\theta_\Xi^\Xi \epsilon_k = \epsilon_{k-1}$. We observe that the ball around $\breve{\bzeta}$ is contained in dotted ball around $\tilde{\bzeta}$. $\hat{\bzeta}$ is equidistant from $\breve{\bzeta}$ and $\ddot{\bzeta}$, and the dashed line illustrate shrinking the ball around $\breve{\bzeta}$ towards $\ddot{\bzeta}$.}
				\label{fig:mp:norm_shrink}
			\end{figure}
			First, we choose any arbitrary $F' \in \Facet(F)$ and  realization $\bar{\bzeta}\in F'$. Indeed, since $F$ is not a minimal face, such a face $F'$ is guaranteed to exist. We then define 
			\begin{align}
			\gamma^+ \triangleq \min \left\{\gamma\geq 0:\Delta_\Xi(\hat{\bzeta}+\gamma(\bar{\bzeta}-\hat{\bzeta}))\leq {2\epsilon_k} \right\},  \label{line:mp:gamma_plus} \\
			\gamma^- \triangleq \min \left\{\gamma\geq 0:\Delta_\Xi(\hat{\bzeta}+\gamma(\hat{\bzeta}-\bar{\bzeta}))\leq {2\epsilon_k} \right\}.\label{line:mp:gamma_minus}
			\end{align}
			We readily observe that $\Delta_\Xi(\cdot)$ is a continuous function on $\relint(F)$ and this function decreases to zero as we get closer to the facets of $F$. Therefore, it follows from \eqref{line:mp:condition_two_for_final_proof} and the intermediate value theorem that \eqref{line:mp:gamma_plus} attains its optimum at $\gamma^+ > 0$, and \eqref{line:mp:gamma_minus} either attains its optimum at $\gamma^->0$ or is infeasible (in which case $\gamma^- = \infty$). We define
			\begin{align*}
			\breve{\bzeta} \triangleq \begin{cases}
			\hat{\bzeta}+\gamma^+(\bar{\bzeta}-\hat{\bzeta}),&\text{if } \gamma^- = \infty,\\
			\argmin \left \{ \|\hat{\bzeta}-\bzeta \|: \; \bzeta\in\{\hat{\bzeta}+\gamma^+(\bar{\bzeta}-\hat{\bzeta}),\hat{\bzeta}+\gamma^-(\hat{\bzeta}-\bar{\bzeta})\} \right \},&\text{otherwise}.
			\end{cases}
			\end{align*} 
			In both cases, since $\gamma^+, \gamma^- >0$, it follows that $\breve{\bzeta} \neq \hat{\bzeta}$ and 
			\begin{align}
			\Delta_{\Xi}(\breve{\bzeta})= {2\epsilon_k}. \label{line:mp:breve_zeta_satisfies}
			\end{align}
			We observe that $\breve{\bzeta}$ satisfies \eqref{line:mp:condition_one_for_final_proof} in Case 1. Therefore, it follows from \eqref{eq:fixed_Y_induction_doubled} that there exists a $\by^0 \in \R^r$ and $\bby \in \mathcal{C}$ such that 
			\begin{align}\label{eq:fixed_Y_induction_doubled_repeat}
			\bbt \bx + \bbw (\by^0 + \bby \bzeta) \ge \bh(\bzeta),\; \forall \bzeta \in B(\breve{\bzeta}, 2\epsilon_k) \cap \Xi.
			\end{align} 
			%
			%
			%
			%
			Now, let us define
			\begin{align*}
			\ddot{\bzeta} \triangleq 2\hat{\bzeta}-\breve{\bzeta},
			\end{align*}
			and note that $\hat{\bzeta}=(\breve{\bzeta}+\ddot{\bzeta})/2$ (see Figure~\ref{fig:mp:norm_shrink}). In both cases of $\gamma^-$, it follows from the definition of $\breve{\bzeta}$ that 
			$\ddot{\bzeta} \in \Xi$. 
			Moreover, it follows from $\ddot{\bzeta} \in \Xi$ and condition [\ref{lem:mp:feas:II}]\ref{eq:lem:mp:item_feas} that there exists $\ddot{\by} \in \R^r$ which satisfies
			\begin{align}\label{eq:claim_ddot_y}
			\bbt \bx + \bbw \ddot{\by} \ge \bh(\ddot{\bzeta}).
			\end{align} 
			Therefore, we can take a combination of \eqref{eq:fixed_Y_induction_doubled_repeat} and \eqref{eq:claim_ddot_y}, obtaining
			\begin{align*} 
			\bbt \bx + \bbw \left(\frac{1}{2}(\by^0+\ddot{\by}) + \frac{1}{2}\bby \bzeta\right) \ge \frac{1}{2}(\bh(\bzeta)+\bh(\ddot{\bzeta})),\; \forall \bzeta \in B(\breve{\bzeta}, 2\epsilon_{k}) \cap \Xi,
			\end{align*} 
			which is equivalent to
			\begin{align*} 
			\bbt \bx + \bbw \left(\frac{1}{2}({\by}^0+\ddot{\by}-\bby\ddot{\bzeta}) + \bby \left(\frac{1}{2}\bzeta+\frac{1}{2}\ddot{\bzeta}\right)\right) \ge \bh\left(\frac{1}{2}\bzeta+\frac{1}{2}\ddot{\bzeta}\right),\; \forall \bzeta \in B(\breve{\bzeta}, 2{\delta}_{k}) \cap \Xi.
			\end{align*}
			Defining $$\hat{\by} \triangleq \frac{1}{2}({\by}^0+\ddot{\by}-\bby\ddot{\bzeta}),$$ 
			we have that
			\begin{align}\label{eq:Claim_hat_y1} 
			\bbt \bx + \bbw (\hat{\by} + \bby \bzeta) \ge \bh(\bzeta),\; \forall \bzeta \in \left\{\frac{1}{2}\bzeta+\frac{1}{2}\ddot{\bzeta}:\bzeta\in B(\breve{\bzeta},2\epsilon_k)\cap \Xi\right\}.
			\end{align} 
			We conclude the proof by showing that 
			\begin{align}\label{eq:hatz_shrink_inclusion}
			B\left(\hat{\bzeta},\epsilon_k\right)\cap \Xi\subseteq \left\{\frac{1}{2}\bzeta+\frac{1}{2}\ddot{\bzeta}:\bzeta\in B(\breve{\bzeta},2\epsilon_k)\cap \Xi\right\},\end{align}
			which together with \eqref{eq:Claim_hat_y1} proves the claim. Indeed,  
			\begin{align}
			B\left(\hat{\bzeta},\epsilon_k\right)\cap\Xi
			&\subseteq 
			\left\{\frac{1}{2}\bzeta+\frac{1}{2}\hat{\bzeta}:\bzeta\in B(\hat{\bzeta},2\epsilon_k)\cap \Xi\right\}
			\label{line:mp:application_of_cone_shrinkage}\\
			&=
			\left\{\frac{1}{2}\bzeta+\frac{1}{2}\hat{\bzeta}:\bzeta\in B(\breve{\bzeta},2\epsilon_k)\cap \Xi+\hat{\bzeta}-\breve{\bzeta}\right\}
			\label{line:mp:not_sure_why_but_shim_explaining}\\
			&=\left\{\frac{1}{2}\bzeta+\frac{1}{2}\hat{\bzeta}:\bzeta\in B(\breve{\bzeta},2\epsilon_k)\cap \Xi\right\}+\frac{1}{2}(\hat{\bzeta}-\breve{\bzeta})\nonumber\\
			&=\left\{\frac{1}{2}\bzeta+\frac{1}{2}\ddot{\bzeta}:\bzeta\in B(\breve{\bzeta},2\epsilon_k)\cap \Xi\right\}+\frac{1}{2}(\hat{\bzeta}-\ddot{\bzeta})+\frac{1}{2}(\hat{\bzeta}-\breve{\bzeta})\nonumber\\
			&=\left\{\frac{1}{2}\bzeta+\frac{1}{2}\ddot{\bzeta}:\bzeta\in B(\breve{\bzeta},2\epsilon_k)\cap \Xi\right\}.
			\label{line:mp:kaboom}
			\end{align}
			Line \eqref{line:mp:application_of_cone_shrinkage} follows from \eqref{line:mp:condition_two_for_final_proof} and Lemma~\ref{lemma:coneshrinkege}. Line~\eqref{line:mp:not_sure_why_but_shim_explaining}  follows because $\hat{\bzeta},\breve{\bzeta}\in \relint(F)$ by construction, $2\epsilon_k\leq \min\{\Delta_{\Xi}(\breve{\bzeta}),\Delta_{\Xi}(\hat{\bzeta})\}$ by \eqref{line:mp:breve_zeta_satisfies} and \eqref{line:mp:condition_two_for_final_proof}, and Lemma~\ref{lem:mp:shifting_minimal_face_balls}.  
			Line \eqref{line:mp:kaboom} follows from the definition of $\ddot{\bzeta}$. This concludes the proof of Case 2. 
		\end{itemize}
	\end{itemize}
	\Halmos \end{proof}

\section{Proofs for Lemma~\ref{lem:mp:feas} from Section~\ref{sec:mp:lemmas}} \label{appx:shrinkage}
In this appendix, we employ the definitions and results from Appendix~\ref{appx:polyhedral} to prove the following result (Theorem~\ref{thm:mp:ubar_delta}), which is utilized in the proof of Lemma~\ref{lem:mp:feas} in Section~\ref{sec:mp:lemmas}.
\begin{theorem}\label{thm:mp:ubar_delta}
	Let $T \subseteq \Xi$ be a nonempty polyhedron. Given $\bar{\epsilon} > 0$ and $\lambda \in (0,1)$, define
	$${\epsilon} \triangleq \left(\frac{\lambda}{2 \theta_{\Xi}^T} \right)^{\textnormal{dim}(T) - \underline{\textnormal{dim}}(T)} \lambda \min \{ \bar{\epsilon}, \Delta_\Xi(T) \}.$$
	Then for all $\hat{\bzeta} \in T$, there exists a $\bar{\bzeta} \in T$ such that
	\begin{align*}
	B(\hat{\bzeta},{\epsilon})\cap\Xi\subseteq \left\{\lambda \bzeta + (1-\lambda) \bar{\bzeta}: \bzeta\in  B(\bar{\bzeta},\bar{\epsilon})\cap \Xi\right\}.
	\end{align*}
\end{theorem}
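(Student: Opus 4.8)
The statement is essentially the geometric core of the proof of Lemma~\ref{lem:mp:feas_then_C_feas} given in Appendix~\ref{appx:C_feas} — specifically Lemma~\ref{lem:mp:C_feas_on_min_face_C_feas} and its Case~2 — with the polyhedron $T$ now playing the role that $\Xi$ played there and the set inclusion replacing the recourse-feasibility conclusion. The plan is therefore to mimic that argument. For each dimension $j\in\{\dimlower(T),\dots,\dim(T)\}$ set
$$\epsilon_j \;\triangleq\; \left(\frac{\lambda}{2\theta^T_\Xi}\right)^{\,j-\dimlower(T)}\lambda\,\min\{\bar\epsilon,\Delta_\Xi(T)\},$$
so that $\epsilon_{\dim(T)}=\epsilon$, every $\epsilon_j$ is strictly positive (Lemmas~\ref{lem:mp:radius} and~\ref{lem:mp:angle}), the sequence $\{\epsilon_j\}$ is strictly decreasing, and $\epsilon_{j-1}=(2\theta^T_\Xi/\lambda)\epsilon_j$. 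Since every $\hat\bzeta\in T$ lies in the relative interior of a unique face $F$ of $T$ and $\epsilon\le\epsilon_{\dim(F)}$, the theorem follows once one establishes, by induction on $j$, the claim that \emph{for every face $F$ of $T$ with $\dim(F)=j$ and every $\hat\bzeta\in\relint(F)$ there is a $\bar\bzeta\in T$ with $B(\hat\bzeta,\epsilon_j)\cap\Xi\subseteq\{\lambda\bzeta+(1-\lambda)\bar\bzeta:\bzeta\in B(\bar\bzeta,\bar\epsilon)\cap\Xi\}$.}

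Three elementary facts drive the induction. First, every face $F$ of $T$ has $\Delta_\Xi(F)\ge\Delta_\Xi(T)$, directly from the recursive Definition~\ref{defn:radius_face}. Second, all points of $\relint(F)$ share the same active index set with respect to $\Xi$ (an affine function that is nonnegative on $F$ and vanishes at a relative-interior point of $F$ vanishes on all of $F$), so Lemmas~\ref{lemma:coneshrinkege} and~\ref{lem:mp:shifting_minimal_face_balls} apply between any two points of $\relint(F)$. Third, if $\hat\bzeta\in\relint(F)$ satisfies $\Delta_\Xi(\hat\bzeta)<\Delta_\Xi(T)$ and $\mathcal{H}_i$ attains $\Delta_\Xi(\hat\bzeta)$, then $\mathcal{H}_i\cap F\neq\emptyset$ (otherwise $\dist(\hat\bzeta,\mathcal{H}_i)\ge\dist(F,\mathcal{H}_i)\ge\Delta_\Xi(F)\ge\Delta_\Xi(T)$), so $F':=F\cap\mathcal{H}_i$ is a proper face of $F$ of dimension at most $j-1$. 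The base case $j=\dimlower(T)$ is then immediate: a minimal face of $T$ is an affine subspace that no hyperplane of $\Xi$ cuts, whence $\Delta_\Xi(\hat\bzeta)\ge\Delta_\Xi(F)\ge\Delta_\Xi(T)\ge\min\{\bar\epsilon,\Delta_\Xi(T)\}$; taking $\bar\bzeta=\hat\bzeta$ and applying Lemma~\ref{lemma:coneshrinkege} with radius $\min\{\bar\epsilon,\Delta_\Xi(T)\}$ and factor $\lambda$ gives $B(\hat\bzeta,\epsilon_{\dimlower(T)})\cap\Xi=\{\lambda\bzeta+(1-\lambda)\hat\bzeta:\bzeta\in B(\hat\bzeta,\min\{\bar\epsilon,\Delta_\Xi(T)\})\cap\Xi\}$, which is contained in the desired set because enlarging the inner ball only enlarges the target.

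For the induction step at dimension $j$ I would distinguish three regimes according to the size of $\Delta_\Xi(\hat\bzeta)$. If $\Delta_\Xi(\hat\bzeta)\ge\min\{\bar\epsilon,\Delta_\Xi(T)\}$, repeat the base-case computation with $\bar\bzeta=\hat\bzeta$. If $\Delta_\Xi(\hat\bzeta)\le\epsilon_j$, let $\mathcal{H}_i$ attain $\Delta_\Xi(\hat\bzeta)$, let $\tilde\bzeta$ be the projection of $\hat\bzeta$ onto the proper face $F'=F\cap\mathcal{H}_i$, and let $F''$ be the face of $T$ with $\tilde\bzeta\in\relint(F'')$, so $\dim(F'')\le j-1$; Hoffman's bound (Lemma~\ref{lemma:Hoffman}, via Definition~\ref{defn:maximal_tangent}) gives $\|\hat\bzeta-\tilde\bzeta\|\le\theta^T_\Xi\Delta_\Xi(\hat\bzeta)\le\theta^T_\Xi\epsilon_j$, hence $B(\hat\bzeta,\epsilon_j)\subseteq B(\tilde\bzeta,(1+\theta^T_\Xi)\epsilon_j)\subseteq B(\tilde\bzeta,\epsilon_{j-1})\subseteq B(\tilde\bzeta,\epsilon_{\dim(F'')})$, where $(1+\theta^T_\Xi)\epsilon_j\le\epsilon_{j-1}$ because $(1+\theta^T_\Xi)\lambda\le2\theta^T_\Xi$ (using $\theta^T_\Xi\ge1>\lambda$), and the induction hypothesis at $\tilde\bzeta$ now gives the claim. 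In the remaining (intermediate) regime $\epsilon_j<\Delta_\Xi(\hat\bzeta)<\min\{\bar\epsilon,\Delta_\Xi(T)\}$ I would, as in Case~2 of the proof of Lemma~\ref{lem:mp:C_feas_on_min_face_C_feas}, walk $\hat\bzeta$ along the segment toward the cutting hyperplane $\mathcal{H}_i$ until $\Delta_\Xi$ descends to the scale of $\epsilon_j$, reaching a point $\breve\bzeta\in\relint(F)$ to which the previous regime applies and so yields a $\bar\bzeta$ together with an inclusion on a ball around $\breve\bzeta$; this inclusion is transported to $\hat\bzeta$ by reflecting $\breve\bzeta$ about $\hat\bzeta$ — sliding the ball from $\breve\bzeta$ to $\hat\bzeta$ via Lemma~\ref{lem:mp:shifting_minimal_face_balls} and re-inflating the shrunken ball via Lemma~\ref{lemma:coneshrinkege} — and one chooses the shorter of the two possible walk directions so that the reflected point stays in $T$.

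The main obstacle is this intermediate regime. In Appendix~\ref{appx:C_feas} the corresponding step leans on $T=\Xi$: every facet of a face $F$ is then itself a face of $\Xi$, so $\Delta_\Xi(\cdot)$ decays to $0$ as one approaches $\partial F$, which is what makes the ``walk toward a facet'' well defined. For a general polyhedron $T$ this fails, and one must instead walk toward the specific hyperplane $\mathcal{H}_i$ furnished by the third preliminary fact and separately verify that the reflected point lies in $T$. The remaining effort is the routine but delicate check that the geometric ratio $2\theta^T_\Xi/\lambda$ built into $\epsilon_j$ makes every inequality in the three regimes close — in particular the reflection is what forces the exact base $\lambda/(2\theta^T_\Xi)$ in the exponent — but all the conceptual tools (the two shrinkage lemmas, Hoffman's lemma, and the positivity of $\Delta_\Xi(T)$ together with the finiteness of $\theta^T_\Xi$) are already on hand from Appendix~\ref{appx:polyhedral}.
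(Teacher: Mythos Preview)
Your induction scheme on the face dimension is exactly right, and your first two regimes are the two cases the paper actually uses. The ``main obstacle'' you identify---the intermediate regime $\epsilon_j<\Delta_\Xi(\hat\bzeta)<\min\{\bar\epsilon,\Delta_\Xi(T)\}$---is an artifact of choosing the wrong threshold between your first two cases, not a real difficulty. The paper splits at $\epsilon_j/\lambda$ rather than at $\epsilon_j$ and $\min\{\bar\epsilon,\Delta_\Xi(T)\}$, and then two cases suffice: if $\Delta_\Xi(\hat\bzeta)\ge\epsilon_j/\lambda$, Lemma~\ref{lemma:coneshrinkege} with radius $\epsilon_j/\lambda\le\bar\epsilon$ and factor $\lambda$ gives the inclusion directly with $\bar\bzeta=\hat\bzeta$; if $\Delta_\Xi(\hat\bzeta)<\epsilon_j/\lambda$, then (since $\epsilon_j/\lambda<\Delta_\Xi(T)$ for $j>\dimlower(T)$) your ``third preliminary fact'' yields a proper face $F'=F\cap\mathcal{H}_i$, the projection $\breve\bzeta$ of $\hat\bzeta$ onto $F'$ satisfies $\|\hat\bzeta-\breve\bzeta\|\le\theta^T_\Xi\epsilon_j/\lambda$ by Definition~\ref{defn:maximal_tangent}, and then $\epsilon_j+\theta^T_\Xi\epsilon_j/\lambda\le(2\theta^T_\Xi/\lambda)\epsilon_j=\epsilon_{j-1}$ (using $\lambda\le\theta^T_\Xi$) gives $B(\hat\bzeta,\epsilon_j)\subseteq B(\breve\bzeta,\epsilon_{j-1})$, so the induction hypothesis applied at $\breve\bzeta$ finishes.

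In particular, no reflection or walking argument from the proof of Lemma~\ref{lem:mp:C_feas_on_min_face_C_feas} is needed here. That Case~2 machinery was required there because the conclusion was about \emph{feasibility of a linear decision rule}, which does not propagate under simple ball containment and so had to be transported by convex combination with a second feasible point $\ddot\bzeta$. Here the conclusion is purely the set inclusion, which \emph{does} propagate under ball containment, so the projection-plus-triangle-inequality step already delivers the induction. Your sketched third regime is therefore superfluous, and the parts of it you were worried about (keeping the reflected point inside $T$, monotonicity of $\Delta_\Xi$ along the walk when $T\neq\Xi$) never need to be verified.
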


To motivate the above theorem, let us recall the structure of the proof of Lemma~\ref{lem:mp:feas}. In essence, the proof of Lemma~\ref{lem:mp:feas} consists of showing  that a first-stage decision $\lambda \bx + (1-\lambda) \bx^*$ has, for each realization $\hat{\bzeta} \in T \triangleq \Xi^*$, a linear decision rule which is feasible for all realizations in an uncertainty set centered at $\hat{\bzeta}$. To establish this, the proof of Lemma~\ref{lem:mp:feas} shows that, for each realization $\hat{\bzeta} \in T$, we can construct a linear decision rule which is feasible for all realizations in the set
\begin{align*}
\left\{\lambda \bzeta + (1-\lambda) \hat{\bzeta}: \bzeta\in  B(\hat{\bzeta},\bar{\epsilon})\cap\Xi\right\},
\end{align*}
where $\bar{\epsilon}> 0$ is a radius which is associated with the first-stage decision $\bx \in \R^n$. However, as Figure~\ref{fig:mp:shrinkage_examples} demonstrates, there generally does not exist a radius $\epsilon \in (0,\bar{\epsilon})$ which makes the following inclusion hold for all $\hat{\bzeta} \in T$:
\begin{align}\label{eq:shrinege_inclusion}
B(\hat{\bzeta}, \epsilon) \cap \Xi \;  \subseteq \; \left\{\lambda \bzeta + (1-\lambda) \hat{\bzeta}: \bzeta\in  B(\hat{\bzeta},\bar{\epsilon})\cap\Xi\right\}.
\end{align}
Therefore, Theorem \ref{thm:mp:ubar_delta} ensures that \eqref{eq:shrinege_inclusion} holds with some ${\epsilon} \in (0,\bar{\epsilon})$ and for all realizations $\hat{\bzeta} \in T$ if, on the right-hand side of \eqref{eq:shrinege_inclusion}, $\hat{\bzeta}$ is replaced by a nearby realization $\bar{\bzeta} \in T$.

\begin{figure}[t]
	\centering
	\input{figures/IssuesShrinkage_new_tikzfig_varying_lambda_shimrit2.tex}
	\label{fig:mp:shrinkage_examples}
\end{figure}


In  principle, \eqref{eq:shrinege_inclusion} would be satisfied for all realizations $\hat{\bzeta} \in T$  by some $\epsilon > 0$ if, on the right-hand side of \eqref{eq:shrinege_inclusion}, we replaced $\Xi$ with the larger set $\tilde{\Xi} \triangleq \Xi + B(\bzero,{\epsilon})$.  However, as Example~\ref{ex:support} from Section~\ref{sec:mp:prelim} illustrates,  there exist problems in which condition~[\ref{ass:feas}] would no longer be satisfied if the set $\Xi$ was replaced with $\Xi + B(\bzero,{\epsilon})$ for any $\epsilon > 0$.

In view of the above motivation, we now present the proof of Theorem~\ref{thm:mp:ubar_delta}:
\begin{proof}{Proof of Theorem~\ref{thm:mp:ubar_delta}.}
	Our proof of Theorem~\ref{thm:mp:ubar_delta} follows immediately from applying the following claim for $F=T$: 
	
	\paragraph{\underline{Claim}: } For each dimension $k \in \{\dimlower(T),\ldots,\dim(T)\}$,  define 
	\begin{align*}
	\alpha_k &\triangleq \left(\frac{2\theta_{\Xi}^T}{\lambda}\right)^{\dim(T)-k}.
	\end{align*}
	Then, for each $F \in \Face(T)$ with $\dim(F) = k$ and each realization $\hat{\bzeta} \in F$, there exists a $\bar{\bzeta} \in F$ such that 
	\begin{equation}\label{eq:shrinkege_rate}B\left(\hat{\bzeta},\alpha_k\epsilon\right)\cap \Xi\subseteq \left\{\lambda \bzeta + (1-\lambda) \bar{\bzeta}: \bzeta\in  B(\bar{\bzeta},\bar{\epsilon})\cap\Xi\right\}.\end{equation}
	
	\paragraph{\underline{Proof of Claim}: } We prove the claim by induction on $k$. 
	
	\begin{itemize}
		\item {\bf Base case:} Suppose  that $k = \dimlower(\Xi)$. We recall that the faces $F \in \Face(T)$ with dimension $\dim(F) = \dimlower(T)$  are exactly the minimal faces of $T$. 
		
		Consider any $F \in \Face(T)$ with $\dim(F) = \dimlower(T)$ and any constraint $i \in [\tilde{m}]$ of the polyhedron $\Xi$. We recall that the polyhedron $T$ is a subset of the polyhedron $\Xi$. 
		Hence, since $F$ is a minimal face of $T$, it must be the case that
		\begin{align*}
		F \cap \mathcal{H}_i \in \{\emptyset,F\}.
		\end{align*}
		Therefore, for any realization $\hat{\bzeta} \in F$, it follows that
		\begin{align}
		\hat{\bzeta} \notin \mathcal{H}_i \iff F \cap \mathcal{H}_i = \emptyset. \label{line:mp:face_hyperplane_empty_or_not}
		\end{align}  
		Furthermore, since $F$ is a polyhedral set contained in $\Xi$,
		\begin{align}
		\Delta_\Xi(F) &= \min \{\rho_1(F), \rho_2(F) \} \notag \\ 
		&\leq \rho_1(F)  \notag \\
		&= \min\limits_{i\in[\tilde{m}]:\mathcal{H}_i\cap F=\emptyset} \dist(F,\mathcal{H}_i) \notag \\ 
		&=  \min\limits_{i\in[\tilde{m}]:\mathcal{H}_i\cap F=\emptyset} \inf_{\bzeta \in F} \dist(\bzeta, \mathcal{H}_i)  \notag \\ 
		&\le  \min\limits_{i\in[\tilde{m}]:\mathcal{H}_i\cap F=\emptyset} \dist(\hat{\bzeta}, \mathcal{H}_i) \notag \\
		&= \min\limits_{i\in[\tilde{m}]:\hat{\bzeta}\notin \mathcal{H}_i}\dist(\hat{\bzeta},\mathcal{H}_i) \notag \\ 
		&= \Delta_\Xi(\hat{\bzeta}). \label{line:mp:applying_defn_radius_point}
		\end{align}
		Indeed, the first three lines follow from Definition~\ref{defn:radius_face}, the fourth line follows from  the definition of the distance between sets (see \eqref{eq:dist_poly} in Appendix~\ref{appx:poly:general}), the fifth line follows from $\hat{\bzeta} \in F$, the sixth line follows from \eqref{line:mp:face_hyperplane_empty_or_not}, and line~\eqref{line:mp:applying_defn_radius_point} follows from Definition~\ref{defn:radius_point}. Therefore,
		\begin{equation}\label{eq:prop_inequalities1}
		\Delta_\Xi(\hat{\bzeta})\geq \Delta_\Xi(F)\geq \Delta_\Xi(T)\geq \min\{\Delta_\Xi(T),\bar{\epsilon}\} =  \alpha_{\underline{\text{dim}}(T)}\frac{{\epsilon}}{\lambda},
		\end{equation}
		where the first inequality follows from \eqref{line:mp:applying_defn_radius_point}, the second inequality follows readily from Definition~\ref{defn:radius_face}, and the final equality follows from the definitions of $\alpha_{\underline{\text{dim}}(T)}$ and $\epsilon$. 
		Combining  line~\eqref{eq:prop_inequalities1} with Lemma~\ref{lemma:coneshrinkege}, we have shown that
		\begin{align*}B\left(\hat{\bzeta},\alpha_{\underline{\text{dim}}(T)}{\epsilon}\right)\cap \Xi&= \left\{\lambda \bzeta + (1-\lambda) \hat{\bzeta}: \bzeta\in  B(\hat{\bzeta},\alpha_{\underline{\text{dim}}(T)}\lambda^{-1}{\epsilon})\cap\Xi\right\}\\
		&\subseteq \left\{\lambda \bzeta + (1-\lambda) \hat{\bzeta}: \bzeta\in  B(\hat{\bzeta},\bar{\epsilon})\cap\Xi\right\}
		,\end{align*}
		where the last inclusion follows from $\alpha_{\underline{\text{dim}}(T)}\lambda^{-1}{\epsilon} \le \bar{\epsilon}$. 
		Thus, we conclude that the claim holds for every minimal face of $T$. 
		
		\item {\bf Induction step:} 
		Fix any dimension $k \in \{\dimlower(T),\ldots,\dim(T)\}$, and assume that the claim holds for all $F \in \Face(T)$ with $\dim(F) \le k$.
		We now prove the claim for any face of dimension $k+1$.  Indeed, let $F\in\Face(T)$ have dimension $k+1$ and choose any $\hat{\bzeta}\in F$. 
		
		If $\hat{\bzeta}\notin \relint(F)$, then there exists a $F'\in\Facet(F)$ such that $\hat{\bzeta}\in F'$, and since $\dim(F')=k$, the claim follows from the induction hypothesis and $\alpha_{k+1}<\alpha_k$. 
		
		We therefore focus on the case where $\hat{\bzeta}\in \relint(F)$.
		Indeed, if $\Delta_\Xi(\hat{\bzeta})\geq \alpha_{k+1}\lambda^{-1}\epsilon$, then Lemma~\ref{lemma:coneshrinkege} implies that
		\begin{align*}B\left(\hat{\bzeta},\alpha_{k+1}{\epsilon}\right)\cap \Xi&= \left\{\lambda \bzeta + (1-\lambda) \hat{\bzeta}: \bzeta\in  B(\hat{\bzeta},\alpha_{k+1}\lambda^{-1}{\epsilon})\cap\Xi\right\}\\
		&\subseteq \left\{\lambda \bzeta + (1-\lambda) \hat{\bzeta}: \bzeta\in  B(\hat{\bzeta},\bar{\epsilon})\cap\Xi\right\}
		,\end{align*}
		where the last inclusion follows from $\alpha_{k+1}\lambda^{-1}\epsilon\leq \bar{\epsilon}$.
		Otherwise, $\Delta_\Xi(\hat{\bzeta})< \alpha_{k+1}\lambda^{-1}\epsilon$, and it follows from Definition~\ref{defn:radius_point} that there exists a constraint $i\in [\tilde{m}] \setminus I_\Xi(\hat{\bzeta})$ such that 
		\begin{equation}\label{eq:dist_H_z}
		\dist(\hat{\bzeta},\mathcal{H}_i)=\Delta_\Xi(\hat{\bzeta})<\alpha_{k+1}\frac{\epsilon}{\lambda}. 
		\end{equation}
		
		We first show that
		\begin{align}
		\mathcal{H}_i\cap F \neq \emptyset. \label{line:mp:H_F_nonempty}
		\end{align}
		Suppose for the sake of contradiction that \eqref{line:mp:H_F_nonempty} was false. 
		Then, 
		\begin{align}
		\alpha_{k+1}\frac{\epsilon}{\lambda} &=\left(\frac{\lambda}{2{\theta}_{\Xi}^T}\right)^{k+1-\underline{\text{dim}}(T)}\min\{\Delta_\Xi(T),\bar{\epsilon}\} \notag \\ 
		&< \min\{\Delta_\Xi(T),\bar{\epsilon}\}\notag \\ 
		&\le \Delta_\Xi(T) \notag \\
		&\le \Delta_\Xi(F)\notag \\ 
		&\leq \dist(F,\mathcal{H}_i) \notag \\
		&\leq \dist(\hat{\bzeta},\mathcal{H}_i). \label{line:mp:upper_bounding_distance_hi}
		\end{align}
		The first line follows from the definitions of $\epsilon$ and $\alpha_{k+1}$, the second line follows from Lemma~\ref{lem:mp:angle}, the fourth and fifth lines follows from Definition~\ref{defn:radius_face} and the supposition that $\mathcal{H}_i\cap F = \emptyset$, and line~\eqref{line:mp:upper_bounding_distance_hi} follows from the definition of distance between sets and the fact that $\hat{\bzeta} \in F$. However, \eqref{line:mp:upper_bounding_distance_hi} is a contradiction of \eqref{eq:dist_H_z}, and thus we have shown that \eqref{line:mp:H_F_nonempty} holds.
		
		Since $\hat{\bzeta}\in F$, $i \notin I_\Xi(\hat{\bzeta})$, and $\mathcal{H}_i\cap F \neq \emptyset$, 
		the set $F'=\mathcal{H}_i\cap F$ is a facet $F$.
		We define $\breve{\bzeta}$ to be the projection of $\hat{\bzeta}$ onto $F'$. Then,
		\begin{equation}\label{eq:dist_z_Pz} \| \hat{\bzeta}-\breve{\bzeta} \| =\dist(\hat{\bzeta},\mathcal{H}_i\cap F)\leq\theta_i(F) \dist(\hat{\bzeta},\mathcal{H}_i) \le \alpha_{k+1}\frac{\epsilon{\theta}_{\Xi}^T}{\lambda},\end{equation}
		where the equality follows from the definition of projection, the first inequality follows from Definition~\ref{defn:maximal_tangent}, and the second inequality follows from \eqref{eq:dist_H_z} as well as  Definition~\ref{defn:maximal_tangent}. 
		Since $F'\in \Facet(F)$, we obtain that $\dim(F')=k$. Thus, applying the induction hypothesis to $\breve{\bzeta}\in F'$, there exists $\bar{\bzeta}\in F'$ such that
		\begin{equation*}
		B\left(\breve{\bzeta},\alpha_k\epsilon\right)\cap \Xi\subseteq
		\left\{\lambda \bzeta + (1-\lambda) \bar{\bzeta}: \bzeta\in  B(\bar{\bzeta},\bar{\epsilon})\cap\Xi\right\}.
		 \end{equation*}
		Moreover, for any 
		\begin{align}
		\bzeta\in B\left(\hat{\bzeta},\alpha_{k+1}\epsilon\right), \label{line:mp:choice_of_xi_in_ball}
		\end{align}
		we observe that
		\begin{align*}\norm{\bzeta-\breve{\bzeta}}&\leq \norm{\bzeta-\hat{\bzeta}}+\norm{\hat{\bzeta}-\breve{\bzeta}}\leq \alpha_{k+1}\epsilon+\alpha_{k+1}\frac{\epsilon {\theta}_{\Xi}^T}{\lambda}\leq \alpha_{k+1}\frac{2{\theta_{\Xi}^T}}{\lambda}\epsilon=\alpha_{k}\epsilon,
		\end{align*}
		where the first inequality follows from triangle inequality, the second inequality follows from \eqref{line:mp:choice_of_xi_in_ball} and \eqref{eq:dist_z_Pz}, and the third inequality follows from $\lambda \le 1$ and Lemma~\ref{lem:mp:angle}. 
		Therefore, we have shown that
		\begin{equation*}
		B\left(\hat{\bzeta},\alpha_{k+1}\epsilon\right)\subseteq B\left(\breve{\bzeta},\alpha_k\epsilon\right) \subseteq 
		\left\{\lambda \bzeta + (1-\lambda) \bar{\bzeta}: \bzeta\in  B(\bar{\bzeta},\bar{\epsilon})\cap\Xi\right\},
		\end{equation*} 
		which concludes the proof. 
	\end{itemize}
	\halmos
\end{proof}

\section{Performance Guarantees of Two-Stage Sample Robust Optimization}\label{appx:finite_sample}
In this appendix, we provide a review of probabilistic guarantees for two-stage distributionally robust optimization with the type-$\infty$ Wasserstein ambiguity set (Problem~\eqref{prob:mp:sro}), and demonstrate a specific two-stage problem (Example~\ref{ex:simple}) in which the first-stage decisions obtained from Problem~\eqref{prob:mp:sro} can provably outperform those obtained from alternative data-driven approaches.

In the context of single- and two-stage problems, a primary motivation in the literature for using Wasserstein-based distributionally robust optimization 
is finite-sample probabilistic guarantees. At the center of these probabilistic guarantees for the optimization problems are measure concentration results for the {empirical probability distribution}. 
When the underlying probability distribution ${\Prb}$  satisfies a certain light-tail assumption, \cite{fournier2015rate} show that the empirical probability distribution  $\widehat{\Prb}_N$ concentrates around the true probability distribution under the type-$p$ Wasserstein distance for any $p \in [1,\infty)$. Similar measure concentration results have been established for the type-$\infty$ Wasserstein distance under different probabilistic assumptions \citep{trillos2014rate,liu2019rate}. One such result is the following: 

\begin{theorem}[{\citealp[Theorem 1.1]{trillos2014rate}}]\label{thm:measure_concentration}
	Assume that the probability distribution of $\bxi \in \R^d$ has a density function $\rho:\bar{\Xi}\rightarrow [ 0,\infty)$, where $\bar{\Xi} \subseteq \Xi \subseteq \R^d$ is an open, connected, and bounded set with a Lipschitz boundary, and there exists a constant $\lambda\ge1$ such that $1 / \lambda \leq \rho(\bzeta)\leq \lambda$ for all $\bzeta \in \bar{\Xi}.$ 
	Then for every fixed $\alpha>2$, 
	\begin{align*}
	\Prb^\infty\left(\mathsf{d}_\infty (\widehat{\Prb}_N, \Prb )  > C \begin{cases}
	\frac{\log(N)^{3/4}}{N^{1/2}},& \textnormal{if } d = 1,\\[3pt]
	\frac{\log(N)^{1/d}}{N^{1/d}},&\textnormal{if } d \ge 2\end{cases} \right) = \mathcal{O}(N^{-\frac{\alpha}{2}}),
	\end{align*}
	where $C$ is a constant which depends only on $\alpha$, $\bar{\Xi}$, and $\lambda$, and $\mathsf{d}_\infty(\cdot,\cdot)$ is the type-$\infty$ Wasserstein distance. 
\end{theorem}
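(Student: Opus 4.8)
The plan is to bound $\mathsf{d}_\infty(\widehat{\Prb}_N,\Prb)$ by reducing it to a multiscale mass-transportation problem and then supplying the stochastic input through elementary binomial concentration. Since $\Prb$ is absolutely continuous, $\mathsf{d}_\infty(\widehat{\Prb}_N,\Prb)$ is the smallest $\delta\ge 0$ for which there exists a measurable map $T:\bar{\Xi}\to\{\bxi^1,\dots,\bxi^N\}$ with $T_\#\Prb=\widehat{\Prb}_N$ and $\|T(\bzeta)-\bzeta\|\le\delta$ for $\Prb$-a.e.\ $\bzeta$; equivalently, $\mathsf{d}_\infty(\widehat{\Prb}_N,\Prb)\le\delta$ iff $\bar{\Xi}$ can be partitioned into cells $A_1,\dots,A_N$ with $\Prb(A_i)=1/N$ and $A_i\subseteq B(\bxi^i,\delta)$. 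Hence it suffices to exhibit such a partition with $\delta$ of the claimed order on an event of probability $1-\mathcal{O}(N^{-\alpha/2})$.

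First I would fix a dyadic decomposition of the bounded set $\bar{\Xi}$ into cubes $\{Q_{j,k}\}$ of side length $\asymp 2^{-j}$ for scales $j=0,\dots,J$, with $2^{-Jd}\asymp(\log N)/N$ so that a finest-scale cube contains $\asymp\log N$ sample points in expectation. The hypotheses that $\bar{\Xi}$ is open, connected, and bounded with Lipschitz boundary, together with the two-sided density bound $1/\lambda\le\rho\le\lambda$, guarantee that every cube $Q_{j,k}$ meeting $\bar{\Xi}$ has $\Prb$-measure comparable to its volume, and that adjacent cubes can exchange mass along paths of length $\asymp 2^{-j}$ inside $\bar{\Xi}$. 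For the probabilistic step, $N\widehat{\Prb}_N(Q_{j,k})\sim\mathrm{Bin}(N,\Prb(Q_{j,k}))$, so a Bernstein/Chernoff tail bound combined with a union bound over the $\mathcal{O}(N)$ cubes of all scales — with the deviation constant scaled by $\sqrt{\alpha}$ — yields an event $\mathcal{E}$ with $\Prb^\infty(\mathcal{E}^c)=\mathcal{O}(N^{-\alpha/2})$ on which, simultaneously for all $j,k$, one has $|\widehat{\Prb}_N(Q_{j,k})-\Prb(Q_{j,k})|\le C\sqrt{\Prb(Q_{j,k})(\log N)/N}$ and every finest-scale cube is nonempty.

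The geometric heart of the argument is to build the partition on $\mathcal{E}$ by correcting $\Prb$ one scale at a time: starting from $\Prb$, redistribute mass among the children of each scale-$j$ cube so that after step $j$ the mass assigned to every scale-$j$ cube agrees with $\widehat{\Prb}_N$. Each such correction displaces mass by at most $C2^{-j}$, but — and this is the point — the amount of mass that must leave a given cube at step $j$ is controlled by the discrepancy $|\widehat{\Prb}_N-\Prb|$ there, which the concentration estimate makes small relative to the cube's measure; by the density lower bound only a thin sub-region of each cube is touched. Telescoping the per-scale displacements then gives that any point moves a total distance $\mathcal{O}\!\big((\log N/N)^{1/d}\big)$ when $d\ge 2$, and at the finest scale one simply assigns the local mass of each nonempty cube to the $\asymp\log N$ atoms it contains, at additional cost $\asymp(\log N/N)^{1/d}$. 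The case $d=1$ is genuinely different: transport on the line is cumulative, so $\delta$ is governed by $\sup|\widehat{F}_N-F|$, and obtaining the sharp $(\log N)^{3/4}N^{-1/2}$ rate requires a chaining / strong-approximation (KMT-type) analysis of the empirical process rather than the crude dyadic union bound; the matching lower bound (optimality up to the logarithm) follows from an ``empty ball'' argument, since a region of $\Prb$-measure at least $c/N$ containing no sample forces $\mathsf{d}_\infty(\widehat{\Prb}_N,\Prb)$ to be at least of order $N^{-1/d}$.

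I expect the main obstacle to be the stitching step: showing that at every scale the required rearrangement can be realized with displacement $\le C2^{-j}$ \emph{uniformly} over cubes, that the corrections at successive scales are mutually consistent, and that their cumulative effect remains $\mathcal{O}\!\big((\log N/N)^{1/d}\big)$ — this is precisely where the Lipschitz regularity of $\partial\bar{\Xi}$, the connectedness of $\bar{\Xi}$, and the two-sided density bound all enter, and where the exact power of $\log N$ is won or lost. A secondary obstacle is that the one-dimensional estimate does not fall out of the dyadic scheme and must be handled by a separate, more delicate fluctuation analysis of empirical quantiles.
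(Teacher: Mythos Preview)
The paper does not prove this statement at all: Theorem~\ref{thm:measure_concentration} is quoted verbatim from \citet[Theorem~1.1]{trillos2014rate} and is used as a black box to derive Corollaries~\ref{thm:finite_sample} and~\ref{thm:smooth_result}. There is therefore no ``paper's own proof'' to compare your proposal against.

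For what it is worth, your sketch is in the spirit of the actual Trillos--Slep\v{c}ev argument (and its antecedents by Ajtai--Koml\'os--Tusn\'ady, Leighton--Shor, and Shor--Yukich): a dyadic decomposition of the domain, binomial concentration at every scale via a union bound, and a multiscale matching/transport construction that stitches the scales together, with the Lipschitz boundary and two-sided density bounds used to control geometry near $\partial\bar{\Xi}$. You correctly flag the two genuine technical hurdles --- making the per-scale corrections consistent and summable, and the fact that $d=1$ needs a separate empirical-process argument. But since the present paper simply imports the result, none of this is needed here; a one-line citation suffices.
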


Following identical arguments as \citet[Theorem 3.5]{esfahani2018data}, the above measure concentration result directly implies the following finite-sample guarantee for Problem~\eqref{prob:mp:sro}:

\begin{corollary}[Finite-sample guarantee] \label{thm:finite_sample}
	Let the conditions of Theorem~\ref{thm:measure_concentration} hold. 
	Then for any $k>1$ and $\alpha>1$, there exists $\kappa>0$ and $c>0$ such that setting $\epsilon_N= \kappa N^{-\frac{1}{k\max\{d,2\}}}$ implies the following guarantee for all $N\in\N$:
	$$\Prb^\infty\left(\hat{V}^{\textnormal{SRO}}_N(\bx)\ge V^*(\bx),\;\; \forall \bx \in \R^n \right)\ge 1- {cN^{-\frac{\alpha}{2}}}.$$ 
\end{corollary}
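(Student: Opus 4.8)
The plan is to follow the standard two-step template of \citet[Theorem 3.5]{esfahani2018data}: isolate a purely probabilistic ``good event'' on which the SRO cost uniformly dominates the true cost, then bound the probability of its complement using the measure-concentration estimate of Theorem~\ref{thm:measure_concentration}. For the deterministic step, I would first recall from \citet[Proposition 3]{bertsimas2018multistage} that Problem~\eqref{prob:mp:sro} with the balls $\mathcal{U}^i_N$ is exactly two-stage distributionally robust optimization over the type-$\infty$ Wasserstein ball of radius $\epsilon_N$ around $\widehat{\Prb}_N$, i.e.\ $\hat{V}^{\textnormal{SRO}}_N(\bx) = \bc^\intercal\bx + \sup\{\Exp_{\Prb'}[Q(\bx,\bxi)] : \mathsf{d}_\infty(\widehat{\Prb}_N,\Prb') \le \epsilon_N,\ \Prb'(\Xi)=1\}$. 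I would then set $\mathcal{E}_N \triangleq \{\mathsf{d}_\infty(\widehat{\Prb}_N,\Prb) \le \epsilon_N\}$; since $\Prb(\Xi)=1$, on $\mathcal{E}_N$ the true distribution is feasible for the inner supremum for every $\bx$, so $\hat{V}^{\textnormal{SRO}}_N(\bx) \ge \bc^\intercal\bx + \Exp_\Prb[Q(\bx,\bxi)] = V^*(\bx)$ for all $\bx \in \R^n$ simultaneously (the inequality being trivial, with both sides $+\infty$, whenever $\Exp_\Prb[Q(\bx,\bxi)]=\infty$). As $\mathcal{E}_N$ does not involve $\bx$, this gives $\mathcal{E}_N \subseteq \{\hat{V}^{\textnormal{SRO}}_N(\bx) \ge V^*(\bx)\ \forall\,\bx\}$. (One can also bypass the reformulation and argue directly: on $\mathcal{E}_N$ a $\mathsf{d}_\infty$-optimal coupling of $\widehat{\Prb}_N$ and $\Prb$ writes $\Prb = \frac1N\sum_i \Prb_i$ with each $\Prb_i$ supported on $\mathcal{U}^i_N$, whence $\Exp_\Prb[Q(\bx,\bxi)] \le \frac1N\sum_i \sup_{\bzeta\in\mathcal{U}^i_N}Q(\bx,\bzeta)$.)

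For the probabilistic step, it remains to choose $\kappa$ so that $\Prb^\infty(\mathcal{E}_N^c)\le cN^{-\alpha/2}$. Write $r_N$ for the rate in Theorem~\ref{thm:measure_concentration} ($r_N = \log(N)^{3/4}N^{-1/2}$ if $d=1$ and $r_N = \log(N)^{1/d}N^{-1/d}$ if $d\ge 2$), and let $C$ be its constant for a fixed exponent $\alpha'$ with $\alpha' \ge \max\{\alpha,3\} > 2$. The comparison $\epsilon_N = \kappa N^{-1/(k\max\{d,2\})} \ge C r_N$ reduces, after dividing through, to requiring $\kappa \ge C\,\log(N)^{\beta}\,N^{-(k-1)/(k\max\{d,2\})}$ for every $N$, where $\beta\in\{3/4,1/d\}$. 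Because $k>1$ the exponent of $N$ here is strictly negative, so $\log(N)^{\beta}N^{-(k-1)/(k\max\{d,2\})}\to 0$ and is bounded over $N\in\N$; taking $\kappa$ to be $C$ times this finite supremum ensures $\epsilon_N \ge Cr_N$ for all $N$, hence $\mathcal{E}_N^c \subseteq \{\mathsf{d}_\infty(\widehat{\Prb}_N,\Prb) > Cr_N\}$. Theorem~\ref{thm:measure_concentration} then bounds the probability of this set by $\mathcal{O}(N^{-\alpha'/2}) \subseteq \mathcal{O}(N^{-\alpha/2})$; absorbing the implied constant into $c$ and combining with the deterministic step yields the stated guarantee.

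The only step that is more than routine is the uniform choice of $\kappa$: one must check that a single finite constant works for all $N$, which is precisely where the hypothesis $k>1$ enters (it lets the polynomial decay absorb the logarithmic factor in $r_N$). A secondary, minor point is the mismatch between the $\alpha>1$ permitted in the statement and the $\alpha'>2$ required by Theorem~\ref{thm:measure_concentration}; I would handle this by applying the theorem with an inflated exponent and using $N^{-\alpha'/2}\le N^{-\alpha/2}$ for $N\ge 1$. Everything else is a verbatim adaptation of the single-stage argument of \citet{esfahani2018data} once the Wasserstein-DRO reformulation of \eqref{prob:mp:sro} is invoked.
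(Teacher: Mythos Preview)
Your proposal is correct and matches the paper's approach exactly: the paper does not write out a proof but simply states that the corollary follows ``by identical arguments as \citet[Theorem 3.5]{esfahani2018data}'' from the measure-concentration result of Theorem~\ref{thm:measure_concentration}, which is precisely the two-step template (good event $\mathcal{E}_N$ via the DRO reformulation, then concentration bound on $\mathcal{E}_N^c$) that you have spelled out. Your handling of the two subtleties---the role of $k>1$ in absorbing the logarithmic factor so that a single $\kappa$ works for all $N$, and the inflation of the exponent to reconcile $\alpha>1$ with the $\alpha>2$ required by Theorem~\ref{thm:measure_concentration}---is sound and fills in details the paper leaves implicit.
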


The above finite-sample guarantee requires choosing the radius $\epsilon_N$ based on the constant $\kappa$, and the probabilistic guarantee depends on the constant $c$. 
In general, the values of these constants 
are dependent on properties of the underlying probability distribution, which are unknown in practice. Nonetheless, Corollary~\ref{thm:finite_sample} provides an explicit rate for decreasing the robustness parameter $\epsilon_N$ as more data is obtained, which can provide insight when choosing this parameter in practice. 

Moreover, Theorem~\ref{thm:measure_concentration} implies that, even for an arbitrary choice of $\kappa$, if the appropriate rate for $\epsilon_N$ is used, Problem~\eqref{prob:mp:sro} will still provide an upper bound approximation of the stochastic problem for all sufficiently large $N \in \N$ almost surely: 

\begin{corollary}\label{thm:smooth_result}
	Let the conditions of Theorem~\ref{thm:measure_concentration} hold. 
	If $\epsilon_N= \kappa N^{-\frac{1}{k\max\{d,2\}}}$ for any fixed $\kappa > 0$ and $k>1$, then there exists a random variable $\bar{N}$ with $\Exp[\bar{N}] < \infty$ such that $V^{\textnormal{SRO}}_N(\bx) \ge V^*(\bx)$ for all $N \ge \bar{N}$ and for all $\bx \in \R^n$. 
\end{corollary}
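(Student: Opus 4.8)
The plan is to show that the inequality $\hat V^{\textnormal{SRO}}_N(\bx)\ge V^*(\bx)$ holds automatically whenever the empirical distribution is type-$\infty$ Wasserstein-close to the truth, and then to use the tail estimate of Theorem~\ref{thm:measure_concentration} — with its free exponent $\alpha$ taken large — to show that the last sample size at which this closeness fails has finite expectation. Concretely, I would first record the deterministic containment of events
\[
\bigl\{\mathsf{d}_\infty(\widehat{\Prb}_N,\Prb)\le\epsilon_N\bigr\}\ \subseteq\ \bigl\{\hat V^{\textnormal{SRO}}_N(\bx)\ge V^*(\bx)\ \ \forall\,\bx\in\R^n\bigr\},
\]
valid for every fixed $N$. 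This is because, by \citep[Proposition 3]{bertsimas2018multistage}, $\hat V^{\textnormal{SRO}}_N(\bx)$ equals $\bc^\intercal\bx$ plus the supremum of $\Exp_{\Prb'}[Q(\bx,\bxi)]$ over all $\Prb'$ with $\Prb'(\bxi\in\Xi)=1$ and $\mathsf{d}_\infty(\Prb',\widehat{\Prb}_N)\le\epsilon_N$; since the true $\Prb$ is supported on $\bar\Xi\subseteq\Xi$, on the event $\mathsf{d}_\infty(\widehat{\Prb}_N,\Prb)\le\epsilon_N$ it is feasible for this inner problem, so the supremum is at least $\Exp[Q(\bx,\bxi)]$, giving $\hat V^{\textnormal{SRO}}_N(\bx)\ge V^*(\bx)$ for every $\bx$. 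This is the reasoning behind Corollary~\ref{thm:finite_sample}, and it does not use the value of $\kappa$.

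Given the containment, I would define $A_N\triangleq\{\mathsf{d}_\infty(\widehat{\Prb}_N,\Prb)>\epsilon_N\}$ and set $\bar N\triangleq 1+\sup\{N\in\N:A_N\text{ holds}\}$ with the convention $\sup\emptyset=0$, so that $\bar N\ge1$ and, by the previous paragraph, $\hat V^{\textnormal{SRO}}_N(\bx)\ge V^*(\bx)$ for all $N\ge\bar N$ and all $\bx$. It then suffices to prove $\Exp[\bar N]<\infty$. Pointwise one has $\sup\{N:A_N\text{ holds}\}\le\sum_{N=1}^\infty N\,\mathbbm{1}(A_N)$, so by monotone convergence $\Exp[\bar N]\le 1+\sum_{N=1}^\infty N\,\Prb^\infty(A_N)$, and the task reduces to showing this series converges.

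This is where Theorem~\ref{thm:measure_concentration} enters, applied with a fixed $\alpha>4$ (for instance $\alpha=5$), which is legitimate because $\alpha$ ranges over $(2,\infty)$; denote by $C$ the associated constant. Since $k>1$, the prescribed radius $\epsilon_N=\kappa N^{-1/(k\max\{d,2\})}$ decays strictly more slowly than the concentration threshold: for $d\ge2$ the inequality $\epsilon_N\ge C\,N^{-1/d}\log(N)^{1/d}$ is equivalent to $N^{-(k-1)/(kd)}\log(N)^{1/d}\le\kappa/C$, whose left-hand side tends to $0$, and the case $d=1$ is identical with $1/(2k)$ in place of $1/d$ and exponent $3/4$ on the logarithm. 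Hence there is a deterministic $N_0$, depending only on $\kappa,k,d$ and $C$, such that for all $N\ge N_0$ the radius $\epsilon_N$ is at least the threshold of Theorem~\ref{thm:measure_concentration}, so that $A_N$ is contained in the event appearing there and $\Prb^\infty(A_N)=\mathcal O(N^{-\alpha/2})=\mathcal O(N^{-5/2})$. Consequently
\[
\sum_{N=1}^\infty N\,\Prb^\infty(A_N)\ \le\ \sum_{N<N_0}N\ +\ \sum_{N\ge N_0} N\cdot\mathcal O(N^{-5/2})\ <\ \infty,
\]
which yields $\Exp[\bar N]<\infty$ and finishes the argument. The only genuinely nonroutine point is this last rate comparison — verifying that the polynomial decay rate assigned to $\epsilon_N$ eventually dominates the log-corrected polynomial concentration rate of Theorem~\ref{thm:measure_concentration} for every large $N$, and that $\alpha$ may be taken large enough that the $N$-weighted tail sum converges; everything else is bookkeeping.
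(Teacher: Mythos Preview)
Your proposal is correct and follows essentially the same approach as the paper's proof: both fix a large $\alpha$ in Theorem~\ref{thm:measure_concentration}, verify that the prescribed $\epsilon_N$ eventually dominates the concentration threshold, and then bound $\Exp[\bar N]$ by a convergent series of the form $\sum_N N\,\Prb^\infty(A_N)$. The only cosmetic differences are your choice $\alpha=5$ versus the paper's $\alpha=6$, and that the paper reaches the same series via the tail-sum identity $\Exp[\bar N]=\sum_N\Prb^\infty(\bar N>N)$ together with a union bound, rather than your direct pointwise inequality $\sup\{N:A_N\}\le\sum_N N\,\mathbbm{1}(A_N)$.
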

\begin{proof}{Proof.}
	Consider any fixed $\kappa > 0$ and $k > 1$, and assume that we have chosen the robustness parameter as $\epsilon_N= \kappa N^{-\frac{1}{k\max\{d,2\}}}$ for each $N \in \N$.  Define the following random variables for each $N \in \N$: 
	\begin{align*}
	Y_N&\triangleq \mathbb{I}\left \{\text{there exists } \bx\in \R^n \text{ such that } \hat{V}^{\textnormal{SRO}}_N(\bx)< V^*(\bx) \right\}.
	\end{align*}
	Finally, define the random variable: 
	\begin{align*}
	\bar{N} &\triangleq 1 +  \max_{N \in \N} \left \{ N: Y_N  = 1 \right \}. 
	\end{align*}
	Fixing $\alpha = 6$, it follows readily from Theorem~\ref{thm:measure_concentration} that there exist deterministic constants $c>0$, $C>0$, $\tilde{N}\in\N$ such that the following holds for all $N\geq \tilde{N}$: 
	\begin{align}\label{eq:A_N_prob}
	\Prb^{\infty}(Y_N = 1)\leq \Prb^\infty(\mathsf{d}_\infty (\widehat{\Prb}_N, \Prb )> \epsilon_N)\leq \Prb^\infty\left(\mathsf{d}_\infty (\widehat{\Prb}_N, \Prb )  > C \begin{cases}
	\frac{\log(N)^{3/4}}{N^{1/2}},& \textnormal{if } d = 1,\\[3pt]
	\frac{\log(N)^{1/d}}{N^{1/d}},&\textnormal{if } d \ge 2\end{cases} \right) \le cN^{-3}.
	\end{align}
	Therefore, 
	\begin{align*}
	\Exp\left[ \bar{N} \right]=\sum_{N=0}^\infty \Prb^\infty \left(\bar{N}> N\right)=\sum_{N=1}^\infty \Prb^\infty \left(\text{there exists } k\geq N \text{ such that }Y_k = 1 \right) &\le \sum_{N=1}^\infty \sum_{k=N}^\infty \Prb^\infty(Y_k = 1) < \infty. 
	\end{align*}
	Indeed, the first equality holds since the random variable $\bar{N}$ is non-negative, the second equality follows from the definition of $\bar{N}$, the first inequality follows from the union bound, and the final inequality follows directly from \eqref{eq:A_N_prob} . 
	%
	%
	\Halmos
\end{proof}

The above result has important implications regarding \emph{solution quality}. Indeed,  Corollary~\ref{thm:smooth_result} implies that, for all sufficiently large datasets, any first-stage decision which is feasible for Problem~\eqref{prob:mp:sro} will be feasible for the underlying stochastic problem. As we will demonstrate shortly in Example~\ref{ex:simple}, such a result does not hold in general for SAA. Moreover, in contrast to two-stage distributionally robust optimization with the type-$p$ Wasserstein ambiguity set for $p \in [1,\infty)$, this upper bound is not conservative  for problems without relative complete recourse. The following Example~\ref{ex:simple} highlights these desirable features of Problem~\eqref{prob:mp:sro}. 
\begin{example} \label{ex:simple}
	Consider the two-stage optimization problem
	\begin{equation} \label{prob:example_simple}
	\begin{aligned}
	&\underset{x \in \R}{\text{minimize}} &&x +  \Exp[Q(x,\xi)], \quad \quad Q(x,\xi) \triangleq \min_y \{ 0: x \ge \xi \},
	\end{aligned}
	\end{equation}
	where $\xi$ is uniformly distributed between $0$ and $1$. In essence, the above example is a single-stage linear optimization problem, where the second-stage cost equals $0$ if $x \ge \xi$ and $\infty$ otherwise. The optimal cost and first-stage decision are $v^* = x^* = 1.$ 
	We assume that the true distribution of the random variable is unknown, and our only knowledge comes from historical samples $\hat{\xi}{}^1,\ldots,\hat{\xi}{}^N$ of $\xi$ and knowledge that the support of the random variable is contained in $\Xi = [0,2]$. 
	
	Applying Problem~\eqref{prob:mp:saa} to this example results in the following formulation
	\begin{equation*}
	\begin{aligned}
	&\underset{x \in \R}{\textnormal{minimize}}&& x \\
	&\textnormal{subject to}&& x \ge \hat{\xi}{}^i \quad \forall i \in [N],
	\end{aligned}
	\end{equation*}
	with optimal cost and first-stage decisions given by 
	$\hat{v}_N^{\textnormal{SAA}} = \hat{x}_N^{\textnormal{SAA}} = \max_{i \in [N]} \hat{\xi}{}^i.$ 
	It is readily observed that the optimal cost and optimal first-stage decisions converge to those of Problem~\eqref{prob:example_simple} as $N \to \infty$. However, we observe that $\hat{x}_{N+1}^{\textnormal{SAA}} > \hat{x}_N^{\textnormal{SAA}}$ if and only if  $Q(\hat{x}_N^{\textnormal{SAA}}, \hat{\xi}{}^{N+1}) = \infty,$ and thus 
	$Q(\hat{x}{}^{\textnormal{SAA}}_N, \hat{\xi}{}^{N+1}) = \infty$ for infinitely many $N \in \N$ almost surely. 
	
	To obtain first-stage decisions with better out-of-sample feasibility, an alternative approach is to restrict the first-stage decisions to those for which the second-stage problem is feasible ($Q(\bx,\bzeta) < \infty$) for every realization in $\Xi$. Examples include two-stage distributionally robust optimization using the $p$-Wasserstein ambiguity set for $p \in [1,\infty)$ and positive radius; see \cite{bertsimas2018multistage}.
	This approach results in
	\begin{equation*}
	\begin{aligned}
	&\underset{x \in \R}{\textnormal{minimize}}&& x \\
	&\textnormal{subject to}&& x \ge \zeta \quad  \forall \zeta \in \Xi,
	\end{aligned}
	\end{equation*}
	which produces an optimal cost and first-stage decisions of $\hat{v}_N^{\textnormal{Feas}} = \hat{x}_N^{\textnormal{Feas}} = 2.$ In contrast to SAA, this approach provides a guarantee that the resulting first-stage decisions are always feasible for Problem~\eqref{prob:example_simple}. However, such a strong guarantee comes with a downside of poor average performance. 
	
	Sample robust optimization offers a tradeoff between the above approaches. In this example, Problem~\eqref{prob:mp:sro} takes the form 
	\begin{equation*}
	\begin{aligned}
	&\underset{x \in \R}{\textnormal{minimize}}&& x \\
	&\textnormal{subject to}&& x \ge \zeta \quad \forall \zeta \in \cup_{i=1}^N \mathcal{U}^i_N,
	\end{aligned}
	\end{equation*}
	and thus the optimal cost and first-stage decisions are $\hat{v}_N^{\textnormal{SRO}} = \hat{x}_N^{\textnormal{SRO}} = \min \{\max_{i \in [N]} \hat{\xi}{}^{i} + \epsilon_N, 2 \}.$ As prescribed by Corollary~\ref{thm:smooth_result}, we choose the radius to be $\epsilon_N = N^{-\frac{1}{2}}$. Similar to SAA, Theorem~\ref{thm:mp:conv} and our choice of $\epsilon_N$ guarantee that sample robust optimization finds first-stage decisions with good average performance which are asymptotically optimal as $N \to \infty$. In contrast, Corollary~\ref{thm:smooth_result} indicates that sample robust optimization offers much stronger feasibility guarantees than SAA, as $\hat{x}_N^{\textnormal{SRO}}$ will be feasible for Problem~\eqref{prob:example_simple} for all sufficiently large $N \in \N$. Thus, in this example, sample robust optimization has asymptotically optimal average performance while simultaneously offering attractive feasibility guarantees. \halmos
\end{example}

\end{APPENDICES}

\end{document}